\newif\ifpdf
\newtheorem{theorem}{Theorem}[section]
\newtheorem{lemma}[theorem]{Lemma}
\newtheorem{proposition}[theorem]{Proposition}
\newtheorem{corollary}[theorem]{Corollary}
\newtheorem{algorithm}[theorem]{Algorithm}
\theoremstyle{definition}
\newtheorem{definition}[theorem]{Definition}
\newtheorem{notation}[theorem]{Notation}
\theoremstyle{remark}
\newtheorem{remark}[theorem]{Remark}
\def\supp{{\text{\rm supp}}}
\def\span{{\text{\rm span}}}
\def\cP{\mathcal{P}}
\def\N{\mathbb{N}}
\def\NN{\mathbb{N}}
\def\Z{\mathbb{Z}}
\def\ZZ{\mathbb{Z}}
\def\R{\mathbb{R}}
\def\RR{\mathbb{R}}
\def\C{\mathbb{C}}
\def\o{\omega}
\def\t{\widetilde}
\def\eps{\varepsilon}
\def\epsilon{\varepsilon}
\newcommand{\ip}[2]{\left\langle#1,#2\right\rangle}
\numberwithin{equation}{section}
\begin{document}

\title[Shearlet Multiresolution Analysis]{Adaptive Directional
Subdivision Schemes and\\ Shearlet Multiresolution Analysis}

\author{Gitta Kutyniok}
\address{Program in Applied and Computational Mathematics,
Princeton University, Princeton, NJ 08544, USA}
\email{kutyniok@math.princeton.edu}
\thanks{The first author was supported by Deutsche Forschungsgemeinschaft
(DFG) Heisenberg-Fellowship KU 1446/8-1.}

\author{Tomas Sauer}
\address{Institute of Mathematics,
        Justus--Liebig--University Gie{\ss}en,
        35392 Gie{\ss}en, Germany}
\email{tomas.sauer@math.uni-giessen.de}

\subjclass[2000]{Primary 42C40; Secondary 41A05, 42C15, 47B99,
65D10, 94A08}

\date{October 13, 2007}

\keywords{Directional transforms, joint spectral radius,
multiresolution analysis, refinement equation, shearlets,
subdivision schemes}

\begin{abstract}
In this paper, we propose a solution for a fundamental problem in
computational harmonic analysis, namely, the construction of a
multire\-so\-lution analysis with directional components. We will do
so by constructing subdivision schemes which provide a means to
incorporate directionality into the data and thus the limit
function. We develop a new type of non-stationary bivariate
subdivision schemes, which allow to adapt the subdivision process
depending on directionality constraints during its performance, and
we derive a complete characterization of those masks for which these
adaptive directional subdivision schemes converge. In addition, we
present several numerical examples to illustrate how this scheme
works. Secondly, we describe a fast decomposition associated with a
sparse directional representation system for two dimensional data,
where we focus on the recently introduced sparse directional
representation system of shearlets. In fact, we show that the
introduced adaptive directional subdivision schemes can be used as a
framework for deriving a shearlet multiresolution analysis with
finitely supported filters, thereby leading to a fast shearlet
decomposition.
\end{abstract}

\maketitle

\section{Introduction}
Efficient and economical representations of anisotropic structures
are essential in various areas in applied mathematics. The
nature of the problems we face can be divided into two types, namely
when the anisotropic structure is given explicitly and when it is
given implicitly. The analysis of images and higher dimensional data
with respect to directional features shall serve as an example of an
explicitly given anisotropic structure, whereas the solution of
hyperbolic partial differential equations often exhibits the
phenomenon of shocks which can be interpreted as an implicit
anisotropic structure.

It is well known that wavelets are perfectly suited for providing
efficient representations in the sense of sparsity for problems with
a dominant isotropic regularity, at the same time being
associated with a multiresolution analysis which is the key
ingredient for a fast decomposition algorithm. However, when dealing
with anisotropic phenomena wavelets do not perform equally well. In
fact, it can be proven that wavelets do not provide optimally sparse
representations.

In contrast to earlier approaches such as directional wavelets
\cite{antoine}, complex wave\-lets \cite{kingsbury}, ridgelets
\cite{CD99}, and contourlets \cite{DV05}, the curvelets introduced
by Cand\`{e}s and Donoho precisely satisfy this need, in the sense
of resolving the wavefront set \cite{CD05b} and the curvelet
representation being optimally sparse for objects with
$C^2$-singularities \cite{CD04}. Also there already exist some first
results on applying curvelets to hyperbolic partial differential
equations by Cand\`{e}s and Demanet \cite{CD05}. However, one
drawback is the lack of a multiresolution analysis associated with
curvelets, and, in particular, a fast decomposition algorithm in the
time domain. This raises the question about the existence of a
representation system with analyzing properties as good as
curvelets, but being equipped with a more ``wavelet-like'' structure
in the sense of being associated with a multiresolution analysis. In
fact, the discrete counterpart would then lead to finitely supported
filters that allow for a mathematically justified discrete fast
decomposition of discrete data. We anticipate such a representation
to combine the favorable computational properties of wavelets
with the main additional property to provide a means to resolve
anisotropic structures efficiently.


In this paper we give a complete, positive answer to the question of
the existence of such a system by introducing subdivision schemes for the
recently introduced concept of shearlets, thus
constructing an associated multiresolution analysis which indeed
leads to a fast discrete decomposition algorithm. The directional
representation system of {\em shearlets}
\cite{GKL06} stands out for the following reason. They do not only
precisely resolve the wavefront set \cite{KL06} and provide
optimally sparse representations \cite{GL06}, but shearlet systems
are generated by one single function which is dilated by a parabolic
scaling and a shear matrix and translated in the time domain, hence
form an affine system. We might even interpret the system of
shearlets as being generated by a strongly continuous, irreducible,
square-integrable representation of a certain group, the shearlet
group \cite{DKMSST06}. This rich mathematical structure enables, for
instance, the application of coorbit theory to study smoothness
spaces -- so-called shearlet coorbit spaces -- associated with the
decay of the shearlet coefficients \cite{DKST07}. We would further
like to mention that one attempt to associate shearlets with a so-called
generalized multiresolution analysis can be found in \cite{LLKW05}.
However, this structure did not yield a
fast decomposition due to the fact that the filters are not
compactly supported and even infinitely many filters have to be
employed.

Our approach to derive a multiresolution analysis associated with
shearlets and to provide a feasible fast shearlet decomposition
comprises the introduction of a new class of non-stationary
bivariate subdivision schemes which incorporate directionality in a
particular way.
Subdivision schemes provide a mathematical method to refine given
coarse data while
providing characterization results to ensure convergence to a
continuous function, say. Moreover, such schemes automatically
provide \emph{refinable functions} which are the basis for any
multiresolution analysis as nestedness of the different levels of
resolution is equivalent to the refinability of the underlying
``basis'' function. Homogeneous stationary subdivision schemes have been
studied extensively over the last 20 years; for an elaborate survey we
refer the reader to \cite{CavarettaDahmenMicchelli91}. Recently,
algebraic methods have been introduced as a means to derive
characterizations of convergence and approximation order
in a very natural way for multivariate subdivision (cf. \cite{Sauer02b}).
On the other hand, also the conditions of homogeneity and stationarity have
been released by various authors, leading to subdivision schemes where the
refinement rule varies with the level of iteration or the location of
refinement. However, the gain in generality always comes with the prize of a
loss of structure so that there is comparatively little known about these
generalizations (see, e.g., \cite{CD96,CC04}). In particular,
no subdivision schemes were known so far which provide a means
to adapt the subdivision process  depending on directionality
constraints during its performance while still ensuring convergence.
The development of such subdivision schemes will be important both
for construction of a shearlet multiresolution analysis as well as
for opening the research area of methods for data refinement to incorporate
anisotropic structures.


We will show in this paper that such an adaptive directional
subdivision scheme can be constructed and it will indeed lead to a
shearlet multiresolution analysis and a fast shearlet decomposition.
Our approach to derive a non-stationary bivariate adaptive
directional subdivision scheme is based on the idea to iteratively
apply two subdivision schemes each of which is associated with a
different direction. The two individual subdivision schemes can employ two
different finitely supported filters while their respective
dilation matrices are
taken from the theory of shearlet systems. We would also like to
mention at this point that the most natural ``directional''
operation, the rotation, can not be employed, since its action does
not provide a refinement of a lattice. In contract to this
observation, products of parabolic scaling and shear matrices do
indeed satisfy this desirable property. The constructed subdivision
scheme provides the opportunity to adaptively change the orientation
of the data during the subdivision process, since in each iteration
one of both single subdivision schemes can be applied. In this sense,
we can visualize the subdivision process as a binary tree, in which the
direction of the finer data is dependent on the branch we choose.
However, for convergence we certainly need to study each branch of
the tree, which requires an appropriate definition of convergence. Our first
key result shows that, provided the adaptive directional subdivision
scheme converges, we obtain associated generalized refinement
equations (Theorem \ref{T:RefEq}). These will become essential for
deriving a shearlet multiresolution analysis. As a main result we
then provide a complete characterization of those masks which lead
to convergent adaptive directional subdivision schemes (Theorem
\ref{T:Convergence}) in terms of algebraic and spectral properties of the
associated filters. In the proof we will make use of ideal theoretic methods
which come in handy to extract ``the zero at $-1$'' of the two masks.

For the construction of a shearlet multiresolution analysis we
employ the fact that each wavelet multiresolution
analysis is associated with a convergent subdivision scheme \cite{Dau92}. We
introduce scaling spaces based on the previously constructed
directional subdivision schemes, and then prove that these indeed
provide a multiresolution analysis structure (Theorem
\ref{T:ShearletMRA}) due to the refinement equations mentioned above.
This multiresolution analysis will then provide us in a very
natural way with a mathematically justified discrete fast shearlet
decomposition of discrete data which is stated as Algorithm \ref{algo:FSD}.
Also here we encounter a binary tree
structure, since the decomposition will be dependent on the
different directions which were encoded in a binary tree structure
of the subdivision process. For the construction of a shearlet
multiresolution analysis and a fast shearlet decomposition, we focus
on the situation of interpolatory masks. The non-interpolatory case
is beyond the scope of this paper and will be studied in a
forthcoming paper.


\medskip

The outline of the paper is the following. In Section
\ref{sec:refinement} we briefly introduce discrete shearlet systems.
We further study which directions can be attained by the action of
the associated dilation matrices on $\ZZ^2$. The new type of
subdivision schemes, which we baptize {\em adaptive directional
subdivision schemes}, are introduced in Section \ref{sec:adaptive}.
In Section \ref{sec:convergence} we provide a complete
characterization of convergence for those schemes along with the
necessary ideal theoretic background. Some numerical experiments on
the refinement of data employing this new type of subdivision
schemes are provided in Section \ref{sec:numerics}. We then show how
the previously derived adaptive directional subdivision schemes can
be used as a framework for deriving a {\em shearlet multiresolution
analysis} with finitely supported filters (Section
\ref{sec:shearletMRA}). In Section \ref{sec:FSD} we employ these
results to provide a {\em fast shearlet decomposition}.


\section{Refinement of $\Z^2$ by Anisotropic Scaling and Shearing}
\label{sec:refinement}


\subsection{Shearlet Dilation Matrices}
\label{sec:shearlet}

Our approach towards directional refinement of the lattice $\Z^2$
and, later on, adaptive directional subdivision schemes is inspired
by the recently introduced discrete shearlet transform \cite{GKL06},
since this transform is able to precisely detect directions of
singularities (cf. \cite{KL06}) which we will take advantage of. In
order to provide a thorough motivation for our construction, allow
us to first briefly review the idea of shearlets.

Each shearlet system forms an affine system, i.e., consists of
dilations and translations of one single generating function $\psi
\in L^2(\R^2)$, a so-called {\em shearlet}. As dilation matrices,
products of anisotropic parabolic scaling matrices and shear
matrices -- which coined the name ``shearlets'' -- are employed. In
order to define a shearlet system, let $A_a$, $a > 0$, and $S_s$, $s
\in \R$, which are defined by
\[ A_a = \begin{pmatrix}
     a & 0\\ 0 & \sqrt{a}
 \end{pmatrix}
 \quad \mbox{and} \quad
S_s = \begin{pmatrix}
     1 & -s\\ 0 & 1
 \end{pmatrix},\]
denote a {\em parabolic scaling matrix} and a {\em shear matrix},
respectively. Then the {\em shearlet system} associated with a
shearlet $\psi \in L^2(\R^2)$ is given by
\begin{equation}\label{eq:shearlet}
\{ \psi_{jkm}(x) :=  2^{-\frac32j} \psi(S_{-k} A_{4^{-j}} x - m) :
j,k \in \Z,\,m \in \Z^2\}.
\end{equation}
The three parameters $j,k,m$ are interpreted in the following way:
$j$ provides  the scale, and $k$ and $m$ detect the direction and
position of singularities, respectively. It is easy to construct
shearlets such that \eqref{eq:shearlet} forms a Parseval frame for
$L^2(\R^2)$, for instance, by choosing
$\hat{\psi}(\xi_1,\xi_2)=\hat{\psi_1}(\xi_1)
\hat{\psi_2}(\xi_2/\xi_1)$, where $\psi_1 \in L^2(\RR)$ is a
discrete wavelet, i.e., $\sum_{j \in \ZZ} |\hat{\psi}_1(4^j \o)|^2 =
 1$ for  $\o \in \RR$, satisfying $\hat{\psi}_1 \in C^\infty(\RR)$ and
$\supp \, \hat{\psi}_1 \subset [-1,-\frac14] \cup [\frac14,1]$, and
$\psi_2 \in L^2(\RR)$ is a bump function satisfying $\hat{\psi}_2
\in C^\infty(\RR)$, $\supp \,\hat{\psi}_2 \subset [-1,1]$, and
$\sum_{k \in \ZZ} |\hat{\psi}_2(k+\o)|^2 = 1$ for $\o \in \RR$ (cf.
\cite{GKL06}). The associated {\em Shearlet Transform}
$\mathcal{S}\mathcal{H}_\psi$ is then defined on $L^2(\RR^2)$ by
\[ \mathcal{S}\mathcal{H}_\psi f (j,k,m) = \ip{f}{\psi_{jkm}}.\]

In order to provide an equal treatment of the direction of the $x$-
and $y$-axis, the frequency plane is split into the cone
\[C = \{(\xi_1, \xi_2) \in \R^2: \, |\xi_1| \ge \tfrac 14, \, |\tfrac{\xi_2}{\xi_1}| \le 1\},\]
its by $90^0$ rotated copy, and the square centered at the origin of
side length $\frac12$. The Shearlet Transform acts on $C$ and its
copy as described above, while the choice of $\psi$ has to be
adapted appropriately. The center square can be filled in such a way
that this system also forms a Parseval frame. The shearlet system in
$C$ and its copy is usually referred to as {\em shearlets on the
cone}, see \cite{GKL06}. The associated tiling of the frequency
plane is illustrated in Figure \ref{fig:ShearletsOnCone}.

\begin{figure}[h]
\begin{center}
  \ifpdf
  \includegraphics[width=5cm]{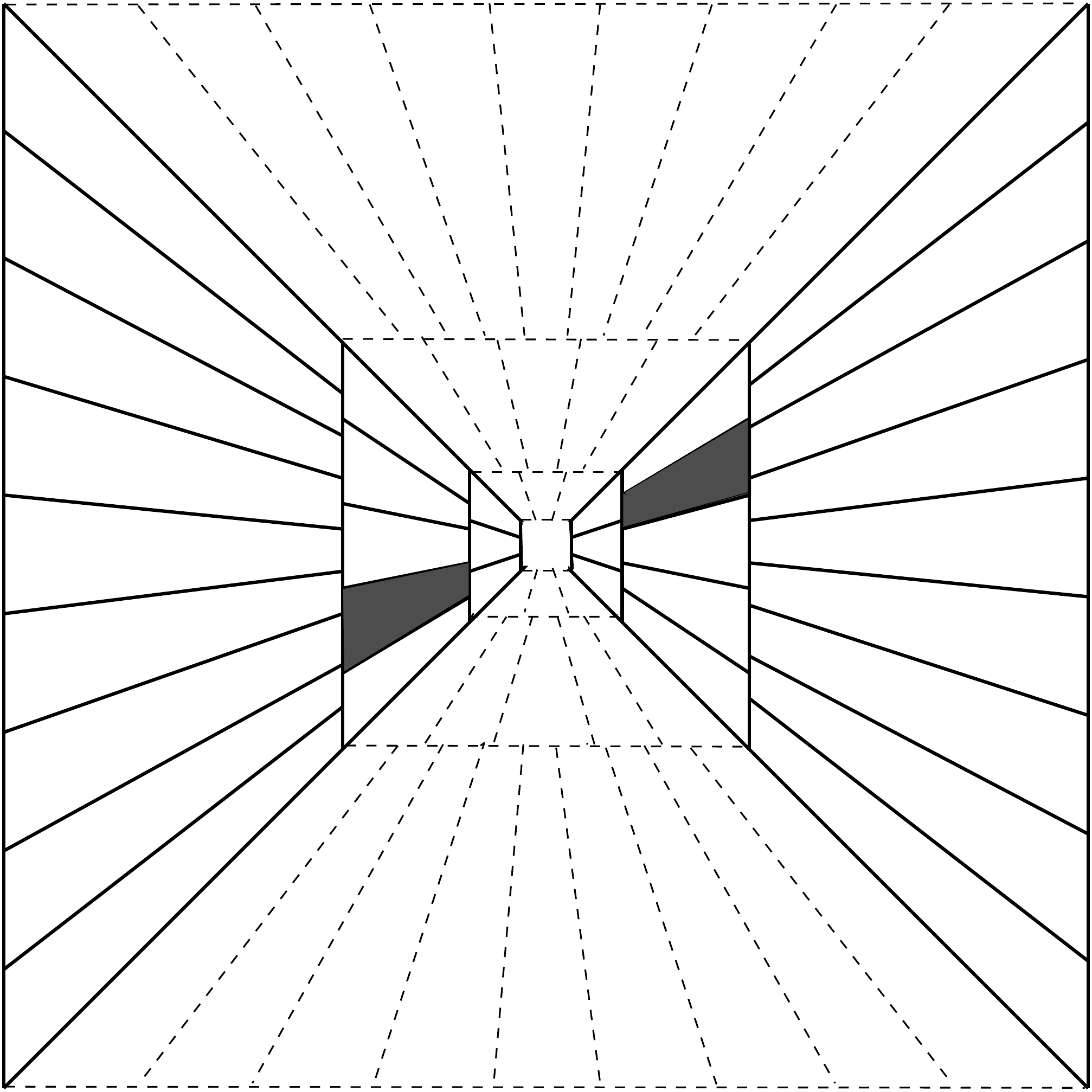}
  \else
  \includegraphics[width=5cm]{ShearletsOnCone.eps}
  \fi
\end{center}
\caption{The tiling of the frequency domain induced by the shearlets
on the cone.} \label{fig:ShearletsOnCone}
\end{figure}

The refinement matrices interesting to us for deriving a directional
refinement of the lattice $\ZZ^2$ are the dilation matrices used in
\eqref{eq:shearlet} for $j=1$, i.e., the matrices
\[M_{k}:=S_{-k} A_{\frac14}
=
 \begin{pmatrix}
     1 & k\\ 0 & 1
 \end{pmatrix}
 \begin{pmatrix}
     \frac14 & 0\\ 0 & \frac12
 \end{pmatrix}
 =
 \begin{pmatrix}
     \frac14 & \frac12k\\ 0 & \frac12
 \end{pmatrix},
 \quad k \in \Z.\]
Following the philosophy of the shearlets on the cone, also the
matrices
\[ \widetilde{M}_{k}:=
\begin{pmatrix}  \frac12 & 0  \\
\\ \frac12 k  & \frac14  \end{pmatrix},\]
which serve as dilation matrices for the rotated copy of $C$, will
be employed as refinement matrices. The matrices $M_{k}$ and
$\widetilde{M}_{k}$ not only provide the possibility to map a line
to various directions, but moreover possess the property of refining
the lattice $\Z^2$ equally at each level as it is shown in the
following result.

\begin{proposition}
\label{prop:matrixrefinement} The following conditions hold.
\begin{enumerate}
\item For all $j,k \in \ZZ$, we have
\[M_{k} \in \mbox{GL}_2(\R) \quad \mbox{and} \quad
M_{k}(4^{-j} \Z \times 2^{-j} \Z) = 4^{-(j+1)} \Z \times 2^{-(j+1)}
\Z.\]
\item For all $j,k \in \ZZ$, we have
\[\widetilde{M}_{k} \in \mbox{GL}_2(\R) \quad \mbox{and} \quad
\widetilde{M}_{k}(2^{-j} \Z \times 4^{-j} \Z) = 2^{-(j+1)} \Z \times
4^{-(j+1)} \Z.\]
\end{enumerate}
\end{proposition}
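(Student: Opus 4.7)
The proof should be an essentially mechanical verification, since both claims reduce to computations with linear maps whose entries are explicit dyadic rationals. My plan is to handle part (i) in detail and then observe that part (ii) follows from the symmetry $\widetilde{M}_k = J M_k^{\top} J^{-1}$ (or equivalently by repeating the argument with coordinates swapped).

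First I would dispatch invertibility: since $M_k$ is upper triangular, $\det M_k = \tfrac14 \cdot \tfrac12 = \tfrac18 \ne 0$, so $M_k \in \mathrm{GL}_2(\mathbb{R})$. The scaling matrix $A_{1/4}$ has determinant $\tfrac18$ and $S_{-k}$ has determinant $1$, so this is consistent with the factorisation given before the proposition.

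Next, for the lattice equation, I would prove the two inclusions separately. For ``$\subseteq$'', take an arbitrary element $(4^{-j} a,\, 2^{-j} b) \in 4^{-j}\Z \times 2^{-j}\Z$ with $a, b \in \Z$, apply $M_k$, and rewrite each coordinate as $4^{-(j+1)}$ (resp.\ $2^{-(j+1)}$) times an integer. The only non-obvious part is the first coordinate, $\tfrac14 \cdot 4^{-j} a + \tfrac{k}{2} \cdot 2^{-j} b = 4^{-(j+1)}\bigl(a + k b \cdot 2^{j+1}\bigr)$, where the factor $2^{j+1}$ arises when rescaling $\tfrac{kb}{2^{j+1}}$ to the coarser denominator $4^{-(j+1)}$; here one must check that $j \ge 0$ is not needed, so the identity holds for all $j \in \Z$. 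For ``$\supseteq$'', I would simply invert: given $(4^{-(j+1)} c,\, 2^{-(j+1)} d)$, solve $M_k(x,y) = (4^{-(j+1)} c,\, 2^{-(j+1)} d)$ by back substitution from the lower row (which forces $y = 2^{-j} d$) and then solving the upper row for $x$, obtaining $x = 4^{-j}(c - 2^{j+1} k d) \in 4^{-j}\Z$.

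For part (ii), rather than redoing the arithmetic I would note that $\widetilde{M}_k$ is obtained from $M_k$ by interchanging the two coordinate axes (formally, $\widetilde{M}_k = J M_k J^{-1}$ with $J = \bigl(\begin{smallmatrix} 0 & 1 \\ 1 & 0 \end{smallmatrix}\bigr)$), and the lattices in (ii) are likewise obtained from those in (i) by the same swap; hence (ii) is an immediate corollary. I do not foresee any real obstacle — the entire content of the statement is the observation that the determinant $\tfrac18$ precisely matches the index of the finer lattice in the coarser one, and that the shear factor $\tfrac{k}{2} \cdot 2^{-j}$ in the off-diagonal entry, when landed in the target lattice $4^{-(j+1)}\Z$, produces the integer multiplier $kb\cdot 2^{j+1}$. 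The only point that requires a moment of care is tracking the powers of $2$ cleanly so that the argument is valid for all $j \in \Z$ and not only for $j \ge 0$.
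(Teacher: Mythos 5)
Your argument for part (i) is essentially the paper's own proof: the same direct computation gives the inclusion $\subseteq$, and the same back-substitution (equivalently, the explicit choice $m_1=n_1-2^{j+1}kn_2$, $m_2=n_2$ in the paper) gives $\supseteq$. For part (ii) the paper merely says ``similar arguments''; your conjugation by the coordinate swap is a clean way to make that precise, but note the slip in your first formula: $\widetilde{M}_k = J M_k J^{-1}$ (as you write later) is correct, whereas $J M_k^{\top} J^{-1} = \begin{pmatrix} 1/2 & k/2 \\ 0 & 1/4 \end{pmatrix} \neq \widetilde{M}_k$ for $k\neq 0$, so drop the transpose.

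One caveat on the point you single out as ``requiring a moment of care'': the identity $\tfrac14 4^{-j}a+\tfrac{k}{2}2^{-j}b = 4^{-(j+1)}\bigl(a+kb\,2^{j+1}\bigr)$ holds for all $j$ as an identity of real numbers, but the multiplier $a+kb\,2^{j+1}$ is an integer only when $j\ge -1$; for $j\le -2$ and $k,b$ odd the stated lattice inclusion actually fails (e.g.\ $j=-2$, $k=1$: $M_1(0,4)=(2,2)\notin 4\Z\times 2\Z$). So your assertion that ``$j\ge 0$ is not needed'' is not justified in the form claimed. This is not a defect relative to the paper — its proof makes exactly the same tacit step, and in the refinement process only $j\ge 0$ is ever used — but if you advertise checking all $j\in\Z$, you should either restrict to $j\ge -1$ or note the counterexample.
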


\begin{proof}
(i) The first claim is obvious. To prove the second claim, let $j,k
\in \Z$ and $m=(m_1,m_2) \in \Z^2$. Then
\[M_{k}
 \begin{pmatrix}
    4^{-j} m_1\\2^{-j} m_2
 \end{pmatrix}
 =
 \begin{pmatrix}
    4^{-(j+1)} m_1 + 2^{-(j+1)}k m_2\\2^{-(j+1)} m_2
 \end{pmatrix}
 =
 \begin{pmatrix}
    4^{-(j+1)} (m_1 + 2^{j+1}k m_2)\\2^{-(j+1)} m_2
 \end{pmatrix},\]
which implies $M_{k}(4^{-j} \Z \times 2^{-j} \Z) \subseteq
4^{-(j+1)} \Z \times 2^{-(j+1)} \Z$.

Now let $n=(n_1,n_2) \in \Z^2$. Then choosing $m = (m_1,m_2) \in
\Z^2$ as $m_1=n_1-2^{j+1}kn_2$ and $m_2 = n_2$ yields
\[M_{k}
 \begin{pmatrix}
    4^{-j} m_1\\2^{-j} m_2
 \end{pmatrix}
 =
 \begin{pmatrix}
    4^{-(j+1)} (n_1-2^{j+1}kn_2 + 2^{j+1}k n_2)\\2^{-(j+1)} n_2
 \end{pmatrix}
 =
\begin{pmatrix}
    4^{-(j+1)}n_1\\2^{-(j+1)}n_2
 \end{pmatrix}.\]
 Thus $M_{k}(4^{-j} \Z \times 2^{-j} \Z) \supseteq 4^{-(j+1)} \Z \times 2^{-(j+1)} \Z$,
 which proves the claim.

(ii) This follows by using similar arguments as in part (i).
\end{proof}

Thus, when applying a sequence of matrices $M_{k_1}, \ldots,
M_{k_n}$ iteratively to the lattice $\Z^2$, at the $j$th level the
points $\{(4^{-j}(m_1+\ell \frac14),2^{-j}(m_2+\frac12)) : \ell \in
\{1,2,3\}\}$ are added to the lattice $4^{-j} \Z \times 2^{-j} \Z$.
This is true for an arbitrary choice of integers $k_j \in \ZZ$, $1
\le j \le n$. Moreover, at each level this map is bijective.

A similar result holds for the matrices $\widetilde{M}_k$, $k \in
\ZZ$.


\subsection{Feasible Directions}
\label{sec:feasible}

Let us now delve deeper into the explicit construction of the
refinement by using the splitting idea of the shearlets on the cone.
The overall aim is to provide a way of refinement such that the
points on the $y$-axis -- or any other line through the origin --
can be moved to an arbitrary line through the origin during the
refinement process. This immediately forces the refinement scheme to
provide different strategies for refinement. We will see how this is
can be achieved by using the matrices $M_{\eps}$ and
$\widetilde{M}_{\eps}$ even only for $\eps =-1,0,1$. In the sequel
we will only focus on the matrices $M_{\eps}$, $\eps=-1,0,1$, since
the others can be treated simultaneously.

In the very first step of the refinement, we apply $M_{\eps}$ to
$\Z^2$ for $\eps=-1,0,1$. Application of $\eps=0$ does not change
any directions, $\eps=1$ maps the $y$-axis to the angle bisector in
the first and third quadrant of the plane, and $\eps=-1$ has the
same effect on the second and fourth quadrant. From now on, we
consider the two cases $\eps \in \{0,-1\}$ or $\eps \in \{0,1\}$
separately. Focusing on the second case, in each step we not only
derive the refinement from a coarser scale $4^{-j} \Z \times 2^{-j}
\Z$ to a finer scale $4^{-(j+1)} \Z \times 2^{-(j+1)} \Z$, but also
have two different ways to achieve this, either by applying $M_0$
or by applying $M_1$. Hence, at the $n$th level we have applied a
product of the form $M_{\eps_n} \ldots M_{\eps_1}$ to $\ZZ^2$, where
$\eps_j \in \{0,1\}$ for each $1 \le j \le n$. For $\eps \in
\{0,-1\}$, one can proceed in exactly the same way which we will,
however, not work out in detail in this paper.

From now on, we will use the abbreviation $E_n = \{0,1\}^n$, $n \in
\NN$, for the index sets and will also denote by
\[
E = \bigcup_{n \in \NN} E_n
\]
the set of all finite $0$-$1$--sequences and by $E_\infty =
\{0,1\}^\NN$ the space of all infinite sequences. Note that $E$ is
canonically embedded in $E_\infty$ by the mapping
\[
E \ni \eps \mapsto \eps^* = \left( \eps,0,0,\dots \right) \in
E_\infty.
\]

The main question to ask at this point concerns the possible
directions this procedure allows us to map the points on the
$y$-axis to. For this analysis, we restrict our attention to the
first quadrant of the plane, since the same refinements occur in the
third quadrant only in an origin-symmetric way.

\smallskip

We first notice that the sequence of $n$ matrices $M_\eps$ we choose
is completely determined by the associated sequence $\eps \in E_n$.
Hence this refinement scheme has the structure of a binary tree as
illustrated in Figure \ref{fig:binary}.

\begin{figure}[h]
\begin{center}
  \begin{picture}(250,190)(0,0)
\put(0,90){$\ZZ^2$} \put(15,100){\vector(1,1){45}}
\put(15,90){\vector(1,-1){45}} \put(65,140){$M_0 \ZZ^2$}
\put(65,40){$M_1 \ZZ^2$} \put(95,150){\vector(2,1){45}}
\put(95,140){\vector(2,-1){45}} \put(95,50){\vector(2,1){45}}
\put(95,40){\vector(2,-1){45}} \put(145,170){$M_0 M_0 \ZZ^2, \quad
\eps=(0,0)$} \put(145,113){$M_0 M_1 \ZZ^2, \quad \eps=(1,0)$}
\put(145,70){$M_1 M_0 \ZZ^2, \quad \eps=(0,1)$} \put(145,13){$M_1
M_1 \ZZ^2, \quad \eps=(1,1)$}
  \end{picture}
\end{center}
\caption{The binary tree up to level $2$ associated with the
refinement scheme.} \label{fig:binary}
\end{figure}
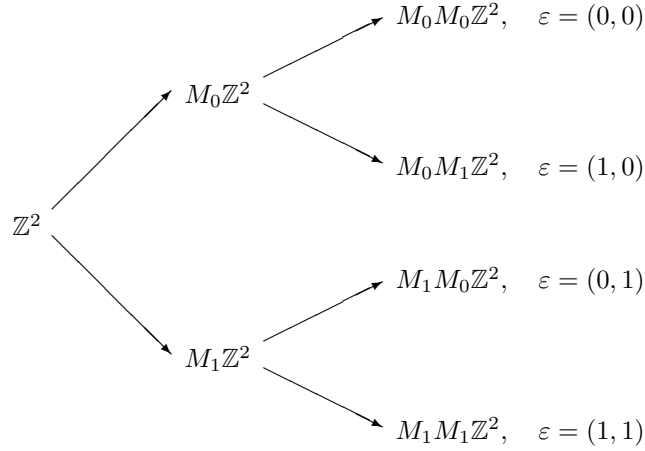

The directions which might be obtained employing this refinement
scheme are encoded in this binary tree in a special though natural
way. To explore this relation, we first compute the product of the
matrices which is applied to achieve the refinement at level $n$.
Interestingly, the following binary number appears therein.

\begin{notation}
For $\eps \in E_n$, $n \in \NN$, we define
\[ \left( \eps \right)_2 = \sum_{j=0}^{n-1} \eps_{j+1} \, 2^j
\qquad\mbox{and}\qquad M_\eps = M_{\eps_n} \cdot \ldots  \cdot
M_{\eps_1}.
\]
\end{notation}

\noindent Using this notion we obtain the following form for a
refinement matrix $M_\eps$.

\begin{lemma}
\label{lemma:productsofM} Let $n \in \N$ and $\eps \in E_n$. Then we
have
\[
M_\eps =
\begin{pmatrix}
  4^{-n}&  4^{-n} \, 2 \, (\eps)_2  \\
  0 & 2^{-n}
 \end{pmatrix}.\]
\end{lemma}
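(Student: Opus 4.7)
The plan is to proceed by induction on $n$. For the base case $n=1$, we have $M_\eps = M_{\eps_1}$ and $(\eps)_2 = \eps_1$, so the stated formula reduces to the definition of $M_{\eps_1}$, giving the $(1,2)$-entry $4^{-1}\cdot 2\cdot \eps_1 = \eps_1/2$ in agreement with the explicit form of $M_k$.

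For the inductive step, I would assume the formula for every $\eps' \in E_n$ and then consider $\eps = (\eps_1,\dots,\eps_{n+1}) \in E_{n+1}$ with truncation $\eps' = (\eps_1,\dots,\eps_n) \in E_n$. By the definition of $M_\eps$, one has $M_\eps = M_{\eps_{n+1}} M_{\eps'}$, and a direct $2\times 2$ multiplication gives
\[
M_{\eps_{n+1}} M_{\eps'} = \begin{pmatrix} 4^{-(n+1)} & 2\cdot 4^{-(n+1)}(\eps')_2 + 2^{-(n+1)}\eps_{n+1} \\ 0 & 2^{-(n+1)} \end{pmatrix}.
\]
The $(2,2)$- and $(1,1)$-entries already have the right form, so only the $(1,2)$-entry requires checking. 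This reduces, after clearing the factor $2\cdot 4^{-(n+1)}$, to the identity $(\eps)_2 = (\eps')_2 + 2^n \eps_{n+1}$, which is immediate from the definition $(\eps)_2 = \sum_{j=0}^{n} \eps_{j+1} 2^j$.

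There is no real obstacle; the only thing to be careful about is the direction of the indexing, since the product $M_\eps = M_{\eps_n}\cdots M_{\eps_1}$ reads the sequence $\eps$ from right to left, while $(\eps)_2$ reads it as a standard binary number with $\eps_1$ as the least significant bit. The compatibility of these two conventions is precisely what makes the induction close cleanly: multiplying on the left by $M_{\eps_{n+1}}$ (the next matrix) corresponds to appending the new most significant bit $\eps_{n+1}$ in $(\eps)_2$.
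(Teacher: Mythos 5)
Your proof is correct and follows essentially the same route as the paper: induction on $n$ with the decomposition $M_\eps = M_{\eps_{n+1}} M_{\eps'}$, reducing the inductive step to the bit identity $(\eps)_2 = (\eps')_2 + 2^n\,\eps_{n+1}$. The only cosmetic difference is that you treat $\eps_{n+1}\in\{0,1\}$ uniformly in one matrix multiplication, whereas the paper splits into the two cases $\eps_{n+1}=0$ and $\eps_{n+1}=1$.
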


\begin{proof}
We will prove this lemma by induction. For $n=1$, the claim
obviously holds. Now suppose that the claim is true for some $n \in
\N$. Let $\eps = \left( \eps',\eps_{n+1} \right) \in E_{n+1}$,
$\eps' \in E_n$. We have to distinguish between $\eps_{n+1} = 0$,
hence $\left( \eps \right)_2 = \left( \eps' \right)_2$, with
\[ M_\eps = M_{\left( \eps',\eps_{n+1} \right)} =
\begin{pmatrix}
  4^{-1}&  0\\
  0 & 2^{-1}
\end{pmatrix}
\begin{pmatrix}
  4^{-n}&  4^{-n} \,  2  \, \left( \eps' \right)_2  \\
  0 & 2^{-n}
\end{pmatrix}
= \begin{pmatrix}
  4^{-(n+1)}&  \frac12 \,  4^{-n} \, (\eps')_2  \\
  0 & 2^{-(n+1)}
\end{pmatrix},
\]
and $\eps_{n+1} = 1$, i.e., $(\eps)_2 = \left( \eps' \right)_2 +
2^{n+1}$, where
\begin{eqnarray*}
M_\eps & = &
\begin{pmatrix}
    4^{-1}&  2^{-1}\\
    0 & 2^{-1}
 \end{pmatrix}
\begin{pmatrix}
  4^{-n}&  4^{-n} \,  2 \,  (\eps')_2  \\
  0 & 2^{-n}
\end{pmatrix}
= \begin{pmatrix}
  4^{-(n+1)}&  \frac12 \left( 4^{-n} (\eps')_2 + 2^{-n}
  \right)  \\
  0 & 2^{-(n+1)}
\end{pmatrix}\\
& = & \begin{pmatrix}
  4^{-(n+1)}&  \frac12  \, 4^{-n} \,  \left( (\eps')_2+2^{n} \right)  \\
  0 & 2^{-(n+1)}
\end{pmatrix}
= \begin{pmatrix}
  4^{-(n+1)}&  \frac12  \, 4^{-n}  \, (\eps)_2  \\
    0 & 2^{-(n+1)}
 \end{pmatrix}.
\end{eqnarray*}
which advances the induction hypothesis.
\end{proof}

\begin{notation}
Let $L$ be a line through the origin and $\eps \in E_n$, $n \in \N$.
Then $s(L,\eps)$ denotes the slope of $M_{\eps}L$, which is again a
line through the origin. We further write $s(L)$ for the slope of $L$.
\end{notation}

The next result computes the values of the slopes $s(L,\eps)$.

\begin{lemma}
Let $L$ be a line through the origin and $\eps \in E_n$, $n \in \N$.
  Then the following relations between $s(L,\eps)$, $\eps$
  and the original $L$ hold.
  \begin{enumerate}
  \item If $L$ is a line through the origin with $s(L) \in (0,\infty)$, then
    \[
    s(L,\eps) = \frac{2^{n}}{\frac{1}{s(L)}+2(\eps)_2}.
    \]
  \item If $L = \{0\} \times \R$, i.e., $s(L) = \infty$, then
    \[ s(L,\eps)    = \frac{2^{n-1}}{(\eps)_2},
    \]
    where we set $2^{n-1}/0 := \infty$.
  \item If $L = \R \times \{0\}$, i.e., $s(L) = 0$, then
    \[
    s(L,\eps)  = 0.
    \]
\end{enumerate}
\end{lemma}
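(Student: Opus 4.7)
The plan is to verify the three cases by direct computation, applying the closed form for $M_\eps$ obtained in Lemma \ref{lemma:productsofM} to a generating vector of $L$. Since $L$ is a line through the origin and $M_\eps$ is linear (and invertible by Proposition \ref{prop:matrixrefinement}), the image $M_\eps L$ is again a line through the origin, and its slope is determined by the image of any nonzero vector of $L$.

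For case (i), I would take the direction vector $(1, s(L))^T$ of $L$ and compute
\[
M_\eps \begin{pmatrix} 1 \\ s(L) \end{pmatrix}
= \begin{pmatrix} 4^{-n} + 4^{-n} \cdot 2(\eps)_2 \, s(L) \\ 2^{-n} s(L) \end{pmatrix}.
\]
The slope of the image line is the ratio of the second to the first entry, namely
\[
s(L,\eps) = \frac{2^{-n} s(L)}{4^{-n}\bigl(1 + 2(\eps)_2 s(L)\bigr)} = \frac{2^n\, s(L)}{1 + 2(\eps)_2 s(L)} = \frac{2^n}{\tfrac{1}{s(L)} + 2(\eps)_2},
\]
where the assumption $s(L) \in (0,\infty)$ ensures that all divisions are legitimate and the sign of the slope is preserved.

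For case (ii), I take the direction vector $(0,1)^T$ and obtain
\[
M_\eps \begin{pmatrix} 0 \\ 1 \end{pmatrix}
= \begin{pmatrix} 4^{-n} \cdot 2(\eps)_2 \\ 2^{-n} \end{pmatrix},
\]
whose slope equals $2^{-n}/(4^{-n} \cdot 2(\eps)_2) = 2^{n-1}/(\eps)_2$ whenever $(\eps)_2 \neq 0$; if $(\eps)_2 = 0$, then the image vector is $(0, 2^{-n})^T$, which lies on the $y$-axis, matching the convention $2^{n-1}/0 := \infty$. For case (iii), applying $M_\eps$ to $(1,0)^T$ yields $(4^{-n}, 0)^T$, which remains on the $x$-axis, so $s(L,\eps) = 0$. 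None of these steps presents any real obstacle; the only subtlety is bookkeeping the degenerate slopes $0$ and $\infty$, which is precisely why the statement is split into three cases.
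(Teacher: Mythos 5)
Your proposal is correct and follows essentially the same route as the paper: apply the closed form of $M_\eps$ from Lemma \ref{lemma:productsofM} to the vectors $(1,s(L))^T$, $(0,1)^T$, and $(1,0)^T$ respectively, and read off the slope of the image as the ratio of the coordinates. Your explicit handling of the degenerate case $(\eps)_2=0$ in (ii) is a small, welcome addition to the paper's computation.
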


\begin{proof}
(i) We consider the point $(1,s(L)) \in L$. Using Lemma
\ref{lemma:productsofM}, we compute
\[  M_{\eps}
\begin{pmatrix}
    1\\
    s(L)
 \end{pmatrix}
= \begin{pmatrix}
    4^{-n}&  4^{-n}  \, 2 \, (\eps)_2 \\
    0 & 2^{-n}
 \end{pmatrix}
\begin{pmatrix}
    1\\
    s(L)
 \end{pmatrix}
=\begin{pmatrix}
    4^{-n} (1+2\,s(L)\,(\eps)_2)  \\
    2^{-n}s(L)
 \end{pmatrix}
\]
Hence, the slope of the line $M_{\eps}L$ equals
\[\frac{4^{n}\,s(L)}{2^{n}(1+2\,s(L)\,(\eps)_2)}
= \frac{2^{n}\,s(L)}{1+2\,s(L)\,(\eps)_2} =
\frac{2^{n}}{\frac{1}{s(L)}+2\,(\eps)_2}.\]

(ii) Here we consider the point $(0,1) \in L=\{0\} \times \R$. Again
employing Lemma \ref{lemma:productsofM}, we obtain
\[ M_{\eps}
\begin{pmatrix}
    0\\
    1
 \end{pmatrix}
= \begin{pmatrix}
    4^{-n}&  4^{-n} \, 2  \, (\eps)_2  \\
    0 & 2^{-n}
 \end{pmatrix}
\begin{pmatrix}
    0\\
    1
 \end{pmatrix}
=\begin{pmatrix}
    4^{-n} \,  2  \, (\eps)_2  \\
    2^{-n}
 \end{pmatrix}.
\]
Thus
\[s(L,\eps) = \frac{4^{n}}{2^{n+1}(\eps)_2}
= \frac{2^n}{2 \,(\eps)_2}.\]

(iii) is easily verified by noting that the point $(1,0)$ is mapped
to
\[
M_{\eps}
\begin{pmatrix}
    1\\
    0
  \end{pmatrix}
  = \begin{pmatrix}
    4^{-n}&  4^{-n}  \, 2 \,  (\eps)_2  \\
    0 & 2^{-n}
  \end{pmatrix}
  \begin{pmatrix}
    1\\
    0
 \end{pmatrix}
 =
 \begin{pmatrix}
   4^{-n} \\ 0
 \end{pmatrix}
\]
so that the slope remains zero.
\end{proof}

Our main result in this section will show that indeed the points on
an arbitrary line through the origin of slope $\neq 0$ can be moved
arbitrarily close to prescribed lines through the origin during the
refinement process.

\begin{theorem}
\label{theo:possibledirections} Let $L$ be a line through the origin
with $s(L) \in (0,\infty]$. Then, for each $t \in
[\frac12,\infty]$ and $\delta > 0$, there exists some $n \in \NN$
and $\eps \in E_n$ such that
\[ |s(L,\eps)-t| < \delta.\]
\end{theorem}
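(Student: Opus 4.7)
The plan is to use the explicit slope formulas established in the preceding lemma together with the fact that $\varepsilon \mapsto (\varepsilon)_2$ is a bijection from $E_n$ onto $\{0,1,\ldots,2^n-1\}$. Setting $a = 1/s(L) \in [0,\infty)$, with the convention $a = 0$ when $s(L) = \infty$, both cases (i) and (ii) of the preceding lemma unify into
\[
s(L,\varepsilon) = \frac{2^n}{a + 2(\varepsilon)_2}
\]
(with $2^n/0 = \infty$ when $a = (\varepsilon)_2 = 0$). Thus the set of slopes attainable at level $n$ equals
\[
S_n = \left\{ \frac{2^n}{a + 2k} : k = 0,1,\ldots,2^n-1 \right\},
\]
whose maximum tends to $\infty$ (achieved at $k=0$) and whose minimum tends to $1/2$ (achieved at $k = 2^n - 1$) as $n \to \infty$.

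For finite target $t \in [1/2,\infty)$, the plan is to pick $k^*$ to be the nearest integer in $\{0,\ldots,2^n-1\}$ to $2^{n-1}/t - a/2$. For $n$ sufficiently large, $k^*$ lies in the admissible range and satisfies $|a + 2k^* - 2^n/t| \le 2$. A direct calculation then yields
\[
\left| s(L,\varepsilon) - t \right| = \frac{t\,|2^n/t - (a + 2k^*)|}{a + 2k^*} \le \frac{2t}{a + 2k^*} \le \frac{4 t^2}{2^n},
\]
where the last bound uses $a + 2k^* \ge 2^n/t - 2 \ge 2^{n-1}/t$ for $n$ large enough. This falls below $\delta$ once $n$ is chosen large enough, proving the approximation. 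The case $t = \infty$ is essentially trivial: taking $\varepsilon = (0,\ldots,0)$ gives $(\varepsilon)_2 = 0$ and hence $s(L,\varepsilon) = 2^n/a$ (or $\infty$ if $a = 0$), which can be made exceed any prescribed threshold.

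The only subtlety I anticipate is the corner case $t = 1/2$ combined with $s(L) = \infty$ (so $a = 0$): here the unconstrained choice $k^* = 2^n$ falls outside the admissible range, and one must cap at $k = 2^n - 1$, producing slope $1/(2 - 2^{1-n})$, whose distance from $1/2$ is still $O(2^{-n})$. Apart from this boundary bookkeeping, the proof amounts to a density argument for the one-parameter family $\{2^n/(a+2k)\}$ inside $[1/2,\infty]$, and the quantitative estimate above delivers the required approximation rate at once.
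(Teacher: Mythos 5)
Your argument is correct and takes essentially the same route as the paper: both rest on the slope formula $s(L,\eps)=2^{n}/\bigl(1/s(L)+2(\eps)_2\bigr)$ from the preceding lemma and on approximating $2^{n}/t$ by the attainable quantities $1/s(L)+2(\eps)_2$ with $(\eps)_2\in\{0,\dots,2^n-1\}$, which is the paper's truncated-binary-expansion argument in a slightly different guise. Your treatment is somewhat more uniform (nearest-integer choice with an explicit $O(2^{-n})$ rate, $t=\tfrac12$ handled by capping at $k=2^n-1$ and $t=\infty$ by $k=0$), whereas the paper handles these endpoint cases by limits along eventually-constant sequences, but the substance is the same.
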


\begin{proof}
Suppose $L$ is a line through the origin with $s(L) \in (0,\infty)$. The
case $s(L) = \infty$ can be dealt with in a similar way.

For given $t \in (\frac12,\infty)$ and $\delta > 0$, due to the
denseness of rational numbers there exists some $n \in \NN$ and
$\eps \in E_n$ such that
\[ \left| \sum_{j=0}^{n-1} \eps_{j+1} \, 2^{j-n+1} - \frac{1}{t}\right| <
\frac{\delta}{t(t+\delta)} =: \t{\delta}.
\]
Indeed, $\eps$ can be chosen as a truncation of the binary expansion
of $1/t$. Note that without loss of generality we can assume that
\[ \frac{1}{2^n s(L)} < \t{\delta},\]
since we can always enlarge $n$. Using these relations, we obtain
\begin{eqnarray*}
\lefteqn{
  |s(L,\eps)-t|
  = \left|\frac{2^{n}}{\frac{1}{s(L)}+2(\eps)_2} - t\right| } \\[1ex]
& = & \left|\frac{1}{\frac{1}{2^n s(L)}+\sum_{j=0}^{n-1} \eps_{j+1} \,
2^{j-n+1}} - t\right| = \left|\frac{1-t(\frac{1}{2^n
s(L)}+\sum_{j=0}^{n-1} \eps_{j+1} \, 2^{j-n+1})}{\frac{1}{2^n s(L)}
+\sum_{j=0}^{n-1} \eps_{j+1} \, 2^{j-n+1}}\right| \\[1ex]
& \le & t \left|\frac{\frac{1}{t}-\sum_{j=0}^{n-1} \eps_{j+1} \,
2^{j-n+1} -\frac{1}{2^n s(L)}}{\frac{1}{t}-\t{\delta}}\right| \le
\left|\frac{t^2\t{\delta}}{1-t\t{\delta}}\right| = \delta.
\end{eqnarray*}
Note that for the last equality we used $\t{\delta} < \frac{1}{t}$.

Now let $\eps \in E_\infty$ be defined by $\eps_j = 0$ for all $j
\ge j_0$ for some $j_0 \in \NN$, and let $M > 0$. Then there exists
some $n \in \NN$ such that
\[\frac{1}{2^n s(L)}+\sum_{j=0}^{n-1} \eps_{j+1} \, 2^{j-n+1} < \frac{1}{M}
\quad \mbox{for all }n \ge n_0,\] which implies
\[s(L,(\eps_1 \ldots \eps_n)) = \frac{1}{\frac{1}{2^n s(L)}+\sum_{j=0}^{n-1} \eps_{j+1} \, 2^{j-n+1}} > M,\]
hence $\lim_{n \to \infty} s(L,(\eps_1 \ldots \eps_n)) = \infty$.

Finally, let $\eps \in E_\infty$ be defined by $\eps_j = 1$ for all
$j \ge j_0$ for some $j_0 \in \NN$. Then, for all $n \in \NN$,
\[s(L,(\eps_1 \ldots \eps_n))
= \frac{2^{n}}{\frac{1}{s(L)}+\sum_{j=1}^{n} \eps_{j} \, 2^j} =
\frac{2^{n}}{\frac{1}{s(L)}+2^{n+1}-2-\sum_{j=1}^{j_0-1} (1-\eps_{j})
\, 2^j}\] and hence,
\[
  \lim_{n \to \infty} s(L,(\eps_1 \ldots \eps_n))
  = \lim_{n \to \infty} \frac{1}{\frac{1}{2^{n} s(L)}+2-\frac{1}{2^{n-1}}
    -\frac{1}{2^{n}}\sum_{j=1}^{j_0-1} (1-\eps_{j}) \, 2^{j}} =
\frac12. \qedhere
\]


\end{proof}

Thus only employing $M_0$ and $M_1$ we can move any line arbitrarily
close to any line of slope $\in [\frac12,\infty]$. This shows the
range of directions we might attain (compare Figure
\ref{fig:SplittingOfPlane}). However, we would like to mention that
the change of orientation of the data induced by the subdivision
scheme (see Definition \ref{def:adaptivesubdivision}) is also
affected by directionality of the masks.


\smallskip


\begin{theorem}
  \label{theo:possibledirections2}
  Let $L$ be a line through the origin with $s \in (0,\infty]$. Then, for each
  $t \in [-\frac12,-\infty]$ and $\delta > 0$, there exists some
  $n \in \NN$ and $\eps \in E_n$ such that
  \[ |s(L,\eps)-t| < \delta.\]
\end{theorem}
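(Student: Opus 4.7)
The plan is to replicate the proof of Theorem \ref{theo:possibledirections} in the ``negative'' regime: instead of building sequences from $M_0$ and $M_1$, one works with sequences from $M_0$ and $M_{-1}$. The paper already signalled this symmetry in the remark following the introduction of $M_\eps$ (``For $\eps \in \{0,-1\}$, one can proceed in exactly the same way\ldots''); the present theorem records the resulting reachability statement for negative target slopes $t \in [-\tfrac12,-\infty]$.

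First, for $\eps \in E_n$ I would set $M^-_\eps := M_{-\eps_n} \cdots M_{-\eps_1}$ and verify, by the same induction as Lemma \ref{lemma:productsofM}, that
\[
M^-_\eps = \begin{pmatrix} 4^{-n} & -4^{-n}\cdot 2(\eps)_2 \\ 0 & 2^{-n} \end{pmatrix};
\]
only the sign of the upper-right entry differs from $M_\eps$. Applying $M^-_\eps$ to the representative points $(1,s(L))$, $(0,1)$ as in the slope lemma preceding Theorem \ref{theo:possibledirections} then yields the mirrored formulas
\[
s(L,\eps) = \frac{2^n}{\tfrac{1}{s(L)}-2(\eps)_2} \text{ for } s(L) \in (0,\infty), \qquad s(L,\eps) = -\frac{2^{n-1}}{(\eps)_2} \text{ for } s(L) = \infty,
\]
which now take negative values once $2(\eps)_2 > 1/s(L)$.

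Given $t \in (-\infty,-\tfrac12)$, I would approximate $-1/t \in (0,2)$ by truncating its binary expansion: choose $n$ and $\eps \in E_n$ so that $\bigl|\sum_{j=0}^{n-1}\eps_{j+1}2^{j-n+1}-(-1/t)\bigr| < \widetilde\delta$, where $\widetilde\delta := \delta/(|t|(|t|+\delta))$, and enlarge $n$ if necessary to ensure $1/(2^n s(L)) < \widetilde\delta$ as well. Substituting into the slope formula above and running the same chain of inequalities as in Theorem \ref{theo:possibledirections}, with $1/t$ replaced by $-1/t$ and the single sign change in the denominator carried through, yields $|s(L,\eps)-t| < \delta$. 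The boundary values $t=-\tfrac12$ and $t=-\infty$ are obtained in the limit from eventually-constant sequences $\eps_j = 1$ and $\eps_j = 0$ respectively, by literally the same limit computation that closes the proof of Theorem \ref{theo:possibledirections}. The only real care needed is sign bookkeeping in the denominator estimate: one must verify $\widetilde\delta < 1/|t|$ so that $1/|t|-\widetilde\delta$ stays positive, which is automatic for the stated choice of $\widetilde\delta$. No new conceptual idea beyond those of Theorem \ref{theo:possibledirections} is required.
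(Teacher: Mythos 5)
Your proposal is correct and is precisely the argument the paper intends: the paper gives no separate proof of Theorem \ref{theo:possibledirections2}, deferring to the earlier remark that for $\eps \in \{0,-1\}$ one proceeds exactly as in Theorem \ref{theo:possibledirections}, and your mirrored product formula for $M_{-\eps_n}\cdots M_{-\eps_1}$ and the resulting slope formulas are the correct analogues. One small bookkeeping caveat: in the mirrored denominator $\tfrac{1}{2^n s(L)} - \sum_{j=0}^{n-1}\eps_{j+1}2^{j-n+1}$ the sampling term and the truncation error now push in the \emph{same} direction (they no longer partially cancel as in the positive case), so with the stated $\t{\delta}$ one only gets a bound of roughly $2|t|\delta/(|t|-\delta)$ rather than $\delta$; taking $\t{\delta}$ smaller by a factor of $2$ (or rounding the binary expansion of $-1/t$ up instead of truncating) restores the estimate, and since $\delta>0$ is arbitrary this does not affect the statement.
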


Similar results as Theorems \ref{theo:possibledirections} and
\ref{theo:possibledirections2} also hold for the matrices
$\t{M}_\eps$, $\eps \in \{-1,0,1\}$. We omit to also state these
results for the sake of brevity, since they are similar to the
previous theorems.


\subsection{A Directional Refinement of the Lattice $\Z^2$}
\label{sec:refinementoflattice} The results in the preceding section
point out how to refine $\Z^2$ in a directional way such that all
possible directions can be attained. Dependent on whether we intend
to map say the $y$-axis to a line with a slope contained in $[\frac12,\infty]$,
$[-\frac12,-\infty]$, or $[-\frac12,\frac12]$, we choose to refine
by using the matrices $M_0,M_1$, $M_{-1},M_0$, or
$\t{M}_{-1},\t{M}_{0},\t{M}_{1}$, respectively. Once the type of
matrices is chosen, we iterate depending on the angle we would like
to attain by using Theorem \ref{theo:possibledirections}, Theorem
\ref{theo:possibledirections2}, or the corresponding result for the
matrices $\t{M}_\eps$, $\eps \in \{-1,0,1\}$.
For an illustration of the different areas of lines through the
origin which can be attained during the refinement process dependent
on the chosen matrices we refer to Figure
\ref{fig:SplittingOfPlane}.

\begin{figure}[h]
\begin{center}
\begin{picture}(300,180)(0,0)
\put(70,0)%
{\ifpdf
\includegraphics[width=6cm]{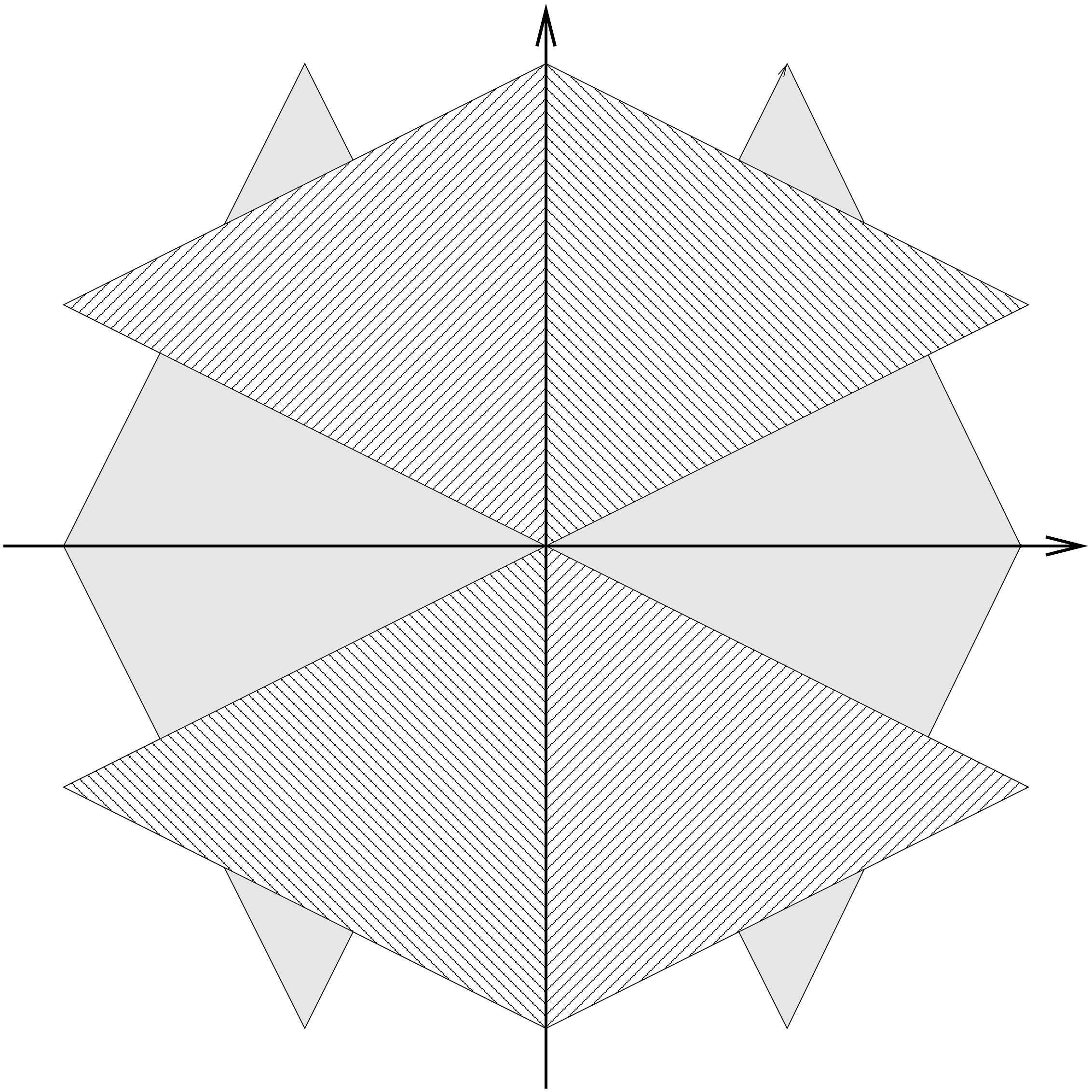}
\else
\includegraphics[width=6cm]{Areas1.eps}
\fi}
\put(230,150){\vector(-1,-1){40}} \put(235,155){$M_\eps \mbox{ with
} \eps=0,1$} \put(235,140){(cf. Theorem
\ref{theo:possibledirections})} \put(250,108){\vector(-1,0){50}}
\put(255,104){$\mbox{Slope } \frac12$}
\put(80,150){\vector(1,-1){40}} \put(-20,155){$M_\eps \mbox{ with }
\eps=-1,0$} \put(-20,140){(cf. Theorem
\ref{theo:possibledirections2})} \put(230,65){\vector(-2,1){40}}
\put(235,55){$\t{M}_\eps \mbox{ with } \eps=-1,0,1$}
  \end{picture}
\end{center}
\caption{This figure shows the different areas of lines through the
origin which can be attained during the refinement process depending
on the choice of $M_\eps$ and $\widetilde{M}_\eps$ and $\eps \in
\{-1,0,1\}$.} \label{fig:SplittingOfPlane}
\end{figure}

From now on we will focus entirely on the matrices $M_0$ and $M_1$.
All following results can be derived in a similar way for
$M_{-1},M_0$ and for $\t{M}_{-1},\t{M}_{0},\t{M}_{1}$.


\section{Adaptive Directional Subdivision}
\label{sec:adaptive} In this section, we finally arrive at the
announced definition of a new type of subdivision schemes, based on
the interaction of \emph{two} ``normal'' stationary subdivision
schemes, which we will study in the sequel. To that end, we choose
two \emph{masks} $a_\eps$, $\eps \in \{0,1\}$, i.e., \emph{finitely
supported} sequences $a_\eps \in \ell_{00} \left( \Z^2 \right)$ as
well as the expanding scaling matrices $W_\eps = M_\eps^{-1}$, $\eps
\in \{0,1\}$. These matrices can be given explicitly as
\begin{equation}\label{eq:W0andW1}
W_0 =
\begin{pmatrix}
  4 & 0 \\ 0 & 2
\end{pmatrix}
\qquad \mbox{and} \qquad W_1 =
\begin{pmatrix}
  4 & -4 \\ 0 & 2
\end{pmatrix},
\end{equation}
and again we set $W_\eps = W_{\eps_n} \cdots W_{\eps_1}$, $\eps \in
E_n$. Also note that
\[
W_1 = W_0 \,
\begin{pmatrix}
  1 & -1 \\ 0 & 1
\end{pmatrix}
=
\begin{pmatrix}
  1 & -2 \\ 0 & 1
\end{pmatrix}
\, W_0.
\]
Such a decomposition also exists for the iterated matrices $W_\eps$,
$\eps \in \{0,1\}^n$, $n \in \N$.

To formulate the next auxiliary result, we also define for $\eps \in
E_n$ the dyadic number
\[
\left[ \eps \right]_2 = .\eps_1 \ldots \eps_n := \sum_{j=1}^n \eps_j
\, 2^{-j} \in [0,1].
\]
With this notation at hand, we obtain the following counterpiece of
Lemma~\ref{lemma:productsofM}.

\begin{lemma}\label{lemma:productsofW}
  For $n \in \N_0$ and $\eps \in E_n$, we have
  \[
  W_\eps = W_{\eps_n} \cdots W_{\eps_1} =
  \begin{pmatrix}
    4^{n} & -4^{n} \, 2 \, [\eps]_2 \\
    0 & 2^{n}
  \end{pmatrix}
  = U_\eps \, W_0^n  = W_0^n V_\eps,
  \]
  where
  \[
  U_\eps =
  \begin{pmatrix}
    1 & -2^{n+1} \left[ \eps \right]_2 \\
    0 & 1
  \end{pmatrix}
  \quad \mbox{and} \quad
  V_\eps =
  \begin{pmatrix}
    1 & -2 \left[ \eps \right]_2 \\ 0 & 1
  \end{pmatrix},
  \]
  hence $U_\eps = V_\eps^{2^n}$.
\end{lemma}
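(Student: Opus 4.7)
The plan is to mimic the inductive argument of Lemma \ref{lemma:productsofM} and then verify the two factorizations by direct upper-triangular matrix multiplication.

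First I would prove the explicit formula
\[
W_\eps = \begin{pmatrix} 4^n & -4^n \cdot 2\,[\eps]_2 \\ 0 & 2^n \end{pmatrix}
\]
by induction on $n$. The base case $n=0$ is the identity, since $[\eps]_2 = 0$ for the empty sequence. For the inductive step, I would write $\eps = (\eps',\eps_{n+1})$ with $\eps' \in E_n$, so that $W_\eps = W_{\eps_{n+1}} W_{\eps'}$. Using the key identity
\[
[\eps]_2 \;=\; [\eps']_2 + \eps_{n+1} \, 2^{-(n+1)},
\]
together with the inductive hypothesis, I would split into the cases $\eps_{n+1}=0$ and $\eps_{n+1}=1$. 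In both cases the product $W_{\eps_{n+1}} W_{\eps'}$ is computed directly from the explicit forms \eqref{eq:W0andW1} of $W_0$ and $W_1$; the lower row is immediate, while the $(1,2)$ entry reduces, after factoring out $-4^{n+1}\cdot 2$, exactly to $[\eps']_2$ when $\eps_{n+1}=0$ and to $[\eps']_2 + 2^{-(n+1)}$ when $\eps_{n+1}=1$.

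Next I would verify the two factorizations. Since $W_0^n = \mathrm{diag}(4^n,2^n)$, a direct multiplication shows
\[
U_\eps\,W_0^n \;=\; \begin{pmatrix} 1 & -2^{n+1}[\eps]_2 \\ 0 & 1 \end{pmatrix} \begin{pmatrix} 4^n & 0 \\ 0 & 2^n \end{pmatrix} \;=\; \begin{pmatrix} 4^n & -4^n \cdot 2\,[\eps]_2 \\ 0 & 2^n \end{pmatrix},
\]
which coincides with the formula just proven, and similarly for $W_0^n V_\eps$. Finally, the identity $U_\eps = V_\eps^{2^n}$ follows because every unipotent matrix of the form $\bigl(\begin{smallmatrix} 1 & a \\ 0 & 1 \end{smallmatrix}\bigr)$ satisfies $\bigl(\begin{smallmatrix} 1 & a \\ 0 & 1 \end{smallmatrix}\bigr)^k = \bigl(\begin{smallmatrix} 1 & ka \\ 0 & 1 \end{smallmatrix}\bigr)$; applying this with $a = -2\,[\eps]_2$ and $k = 2^n$ yields exactly $U_\eps$.

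There is no real obstacle here — the only point demanding care is the order convention $W_\eps = W_{\eps_n}\cdots W_{\eps_1}$ (reversed with respect to the indexing of $\eps$), which must be matched with the correct bookkeeping of $[\eps]_2$ when one appends the new digit $\eps_{n+1}$ at the end of the word $\eps'$ but multiplies the corresponding matrix $W_{\eps_{n+1}}$ on the left of $W_{\eps'}$. Once that correspondence is set up properly, the proof reduces to two $2\times 2$ matrix multiplications plus one power computation for $V_\eps$.
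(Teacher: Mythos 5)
Your proposal is correct and follows essentially the same route as the paper: induction on the length of $\eps$ with the new matrix $W_{\eps_{n+1}}$ multiplied on the left (digit appended at the end of the word), using $[\,(\eps',\eps_{n+1})\,]_2 = [\eps']_2 + \eps_{n+1}2^{-(n+1)}$, followed by direct verification of $U_\eps W_0^n = W_0^n V_\eps$ and the unipotent power identity $\bigl(\begin{smallmatrix} 1 & a \\ 0 & 1 \end{smallmatrix}\bigr)^k = \bigl(\begin{smallmatrix} 1 & ka \\ 0 & 1 \end{smallmatrix}\bigr)$ to get $U_\eps = V_\eps^{2^n}$. The only cosmetic difference is that you treat the base case $n=0$ explicitly, which the paper leaves implicit.
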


\begin{proof}
  The proof is again of inductive nature and relies on noting that
  \[
  W_0 W_\eps =
  \begin{pmatrix}
    4 & 0 \\ 0 & 2
  \end{pmatrix}
  \begin{pmatrix}
    4^{n} & -4^{n} \, 2 \, [\eps]_2 \\
    0 & 2^{n+1}
  \end{pmatrix} =
  \begin{pmatrix}
    4^{n+1} & -4^{n+1} \, 2 \, \left[ \left( \eps,0 \right) \right]_2 \\
    0 & 2^{n+2}
  \end{pmatrix}
  \]
  as well as
  \begin{eqnarray*}
  W_1 W_\eps & = &
  \begin{pmatrix}
    4 & -4 \\ 0 & 2
  \end{pmatrix}
  \begin{pmatrix}
    4^{n} & -4^{n} \, 2 \, [\eps]_2 \\
    0 & 2^{n}
  \end{pmatrix}
  =
  \begin{pmatrix}
    4^{n+1} & -4^{n+1} \left( 2 \, [\eps]_2 + 2^{-n} \right) \\
    0 & 2^{n+1}
  \end{pmatrix} \\
  & = &
  \begin{pmatrix}
    4^{n+1} & -4^{n+1} \, 2 \, \left[ \left( \eps,1 \right) \right]_2 \\
    0 & 2^{n+1}
  \end{pmatrix}.
  \end{eqnarray*}
  Hence,
  \[
  W_\eps =
  \begin{pmatrix}
    2^{2n} & -2^{2n+1} [\eps]_2 \\ 0 & 2^n
  \end{pmatrix}
  = W_0^n \,
  \begin{pmatrix}
    1 & -2 [\eps]_2 \\ 0 & 1
  \end{pmatrix}
  =
  \begin{pmatrix}
    1 & -2^{n+1} [\eps]_2 \\ 0 & 1
  \end{pmatrix} \, W_0.
  \]
  Since for $x \in \R$
  \[
  \begin{pmatrix}
    1 & -x \\ 0 & 1
  \end{pmatrix}^k
  =
  \begin{pmatrix}
    1 & -k x \\ 0 & 1
  \end{pmatrix},
  \]
  also the final claim follows.
\end{proof}

Note that $V_{(0,\dots,0)}$, $V_{(1,0,\dots,0)}$, and all $U_\eps$
are \emph{unimodular} matrices, i.e., they have an inverse in $\Z^{2
\times 2}$. A particular role will be played by the two matrices
\[
V =
\begin{pmatrix}
  1 & -1 \\ 0 & 1
\end{pmatrix},
\qquad U = V^2 =
\begin{pmatrix}
  1 & -2 \\ 0 & 1
\end{pmatrix}
\]
which satisfy
\begin{equation}
  \label{eq:UVProps}
  W_1 = U W_0 = W_0 V, \qquad \mbox{i.e.} \qquad
  W_1 = U^{-1} W_1 V \qquad \mbox{and} \qquad
  W_0 = U W_0 V^{-1}.
\end{equation}

The associated subdivision schemes are now defined as follows. The
term {\em adaptive} refers to the tree-like structure, which
provides various branches for subdivision, whereas the term {\em
directional} refers to the directional structure which comes from
the shearing process contained in the dilation matrices $W_\eps$,
$\eps \in E$.

\begin{definition} \label{def:adaptivesubdivision}
Let $a_\eps \in \ell_{00}(\ZZ^2)$, $\eps \in \{0,1\}$ be two masks,
that is, two \emph{finitely supported sequences}, and let $W_\eps$,
$\eps \in \{0,1\}$ be defined as in \eqref{eq:W0andW1}. Then the
associated {\em adaptive directional subdivision scheme of order
$n$} is defined by
\[
S_\eps = S_{\eps_n} \cdots S_{\eps_1}, \qquad \eps \in E_n, \qquad n
\in \NN,
\]
where, for $\eta \in \{0,1\}$,
\[S_\eta c := S_{a_\eta,W_\eta} c
:= \sum_{\alpha \in \Z^2} a_\eta \left( \cdot - W_\eta \, \alpha
\right) \, c \left( \alpha \right), \quad c \in \ell_\infty \left( \ZZ^2 \right),
\]
\end{definition}
Note that both the mask as well as the scaling matrix of these
subdivision schemes depend on the index $\eps$. Moreover, we wish to
remark that these schemes can clearly be computed in a tree--like
fashion by setting
\[
S_\eps c = S_{(\eps',\eps_n)} c = S_{\eps_n} S_{\eps'} = \sum_{\beta \in
\Z^2} a_{\eps_n} \left( \cdot - W_{\eps_n}
    \beta \right) \, S_{\eps'} c (\beta),
\qquad \eps' \in E_{n-1}.
\]

Adaptive directional subdivision schemes can be considered
subdivision schemes of their own, however, with a different scaling
matrix. This is easily seen by means of the following example: for
$\alpha \in \Z^2$ we have {\allowdisplaybreaks
\begin{eqnarray*}
  S_{\left( \eps_1,\eps_2 \right)} c & = &
  S_{\eps_2} S_{\eps_1} c
  = \sum_{\beta \in \Z^2} a_{\eps_2} \left( \cdot - W_{\eps_2} \beta
  \right) \left( S_{\eps_1} c \right) (\beta) \\
  & = & \sum_{\beta \in \Z^2} a_{\eps_2} \left( \cdot - W_{\eps_2}
    \beta \right)
  \sum_{\gamma \in \Z^2} a_{\eps_1} \left( \beta - W_{\eps_1} \gamma
  \right) \, c (\gamma) \\
  & = & \sum_{\gamma \in \Z^2} \left[ \sum_{\beta \in \Z^2}
    a_{\eps_2} \left( \cdot - W_{\eps_2} \beta - W_{\eps_2}
      W_{\eps_1} \gamma \right) \, a_{\eps_1} (\beta) \right]
  \, c (\gamma) \\
  & =: & \sum_{\gamma \in \Z^2} a_{\left( \eps_1,\eps_2 \right)}
  \left( \cdot - W_{\left( \eps_1,\eps_2 \right)} \gamma \right)
  \, c (\gamma).
\end{eqnarray*}
} An inductive application of this argument immediately gives the
next result.

\begin{lemma}\label{lemma:IterMasks}
  For $\eps \in E_n$, the subdivision scheme $S_\eps$ acts as
  \begin{equation*}
    S_\eps c \, (\alpha)  = \sum_{\beta \in \Z^2}
    a_\eps \left( \alpha - W_\eps \beta \right) \, c (\beta), \qquad
    \alpha \in \Z^2,
  \end{equation*}
  where the coefficient sequences $a_{\eps}$ are recursively defined as
  $a_{\eps} =  a_{\left( \eps',\eps_n \right)}
  = S_{\eps_n} a_{\eps'}$.
\end{lemma}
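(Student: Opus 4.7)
My plan is to prove the lemma by induction on $n$, essentially formalising the two-step calculation already displayed just before the lemma statement.

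For the base case $n=1$, the claim is literally the definition of $S_{\eps_1} = S_{a_{\eps_1},W_{\eps_1}}$ given in Definition~\ref{def:adaptivesubdivision}, with $a_{(\eps_1)} := a_{\eps_1}$. For the inductive step, assume the formula holds for all $\eps' \in E_{n-1}$, write $\eps = (\eps',\eps_n) \in E_n$, and compute
\[
S_\eps c(\alpha) = S_{\eps_n}(S_{\eps'} c)(\alpha)
= \sum_{\beta \in \Z^2} a_{\eps_n}(\alpha - W_{\eps_n}\beta)\, (S_{\eps'}c)(\beta).
\]
Inserting the inductive hypothesis for $S_{\eps'}c(\beta)$ and swapping the two sums (permitted because $a_{\eps_n}$ and $a_{\eps'}$ are finitely supported, so only finitely many terms are nonzero for each fixed $\alpha,\gamma$) gives
\[
S_\eps c(\alpha) = \sum_{\gamma \in \Z^2}\Bigl[\sum_{\beta \in \Z^2} a_{\eps_n}(\alpha - W_{\eps_n}\beta - W_{\eps_n}W_{\eps'}\gamma)\, a_{\eps'}(\beta)\Bigr]\, c(\gamma).
\]

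The inner bracket is, by a substitution $\beta \mapsto \beta$ and the definition of $S_{\eps_n}$ acting on the sequence $a_{\eps'}$, precisely
\[
(S_{\eps_n} a_{\eps'})(\alpha - W_{\eps_n} W_{\eps'}\gamma),
\]
and the factorisation $W_\eps = W_{\eps_n}\cdots W_{\eps_1} = W_{\eps_n}W_{\eps'}$ (which is just the definition of $W_\eps$) lets us rewrite the argument as $\alpha - W_\eps \gamma$. Defining $a_\eps := S_{\eps_n} a_{\eps'}$ completes the induction.

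The only subtlety is to verify that swapping the order of summation is legitimate and that the recursive definition $a_\eps = S_{\eps_n} a_{\eps'}$ yields again a finitely supported sequence, so that the outer sum over $\gamma$ is finite for each $\alpha$. Both facts are immediate: each subdivision operator $S_\eta$ preserves finite support (the support of $S_\eta a$ lies in $\supp a_\eta + W_\eta \supp a$), so by induction $a_\eps \in \ell_{00}(\Z^2)$, and the double sum has only finitely many nonzero terms for each fixed $\alpha$. I expect no real obstacle here; the lemma is a bookkeeping exercise that records how iterated application of the $S_{\eps_i}$ collapses into a single convolution against the mask $a_\eps$ with expanded lattice $W_\eps \Z^2$.
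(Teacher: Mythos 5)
Your proof is correct and is essentially the paper's own argument: the paper carries out exactly this two-step computation for $S_{(\eps_1,\eps_2)}$ and then declares that ``an inductive application of this argument immediately gives the next result,'' which is the induction you have written out. The extra remarks on finite support of $a_\eps = S_{\eps_n} a_{\eps'}$ and the legitimacy of interchanging the sums are sound bookkeeping that the paper leaves implicit (only note that the change of variable is $\beta \mapsto \beta + W_{\eps'}\gamma$, not $\beta \mapsto \beta$, though your displayed formula already reflects the correct shift).
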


To get a better understanding of the geometry of adaptive
directional subdivision, we write $a_1$ as $a_1 = \widetilde a_0
\left( U \cdot \right)$ which is always possible since $U$ is
unimodular. It then follows from repeated applications of
(\ref{eq:UVProps}) that
{\allowdisplaybreaks
\begin{eqnarray*}
  S_{a_1,W_1} c & = &
  \sum_{\alpha \in \Z^2} a_1 \left( \cdot - W_1 \alpha \right)
  \, c (\alpha)\\
  & = & \sum_{\alpha \in \Z^2} \widetilde a_0
  \left( U \cdot - U W_1 U^{-1} U \alpha \right) \, c (\alpha) \\
  & = & \sum_{\alpha \in \Z^2} \widetilde a_0 \left( U \cdot -
    U W_1 V^{-2} \alpha \right) \, c \left( U^{-1} \alpha \right)\\
  & = & \sum_{\alpha \in \Z^2} \widetilde a_0 \left( U \cdot - U W_0 V^{-1} \alpha
  \right) \, c \left( U^{-1} \alpha \right) \\
  & = & \sum_{\alpha \in \Z^2} \widetilde a_0 \left( U \cdot - W_0 \alpha
  \right) \,
  c \left( U^{-1} \alpha \right)\\
  & = & \left( S_{\widetilde a_0,W_0} c \left( U^{-1} \cdot \right) \right)
  \left( U \cdot \right).
\end{eqnarray*}
}
This identity can be rewritten in terms of dilation operators as
\begin{equation*}
  S_1 = D_U \, \widetilde S_0 \, D_{U^{-1}} = D_U \, \widetilde S_0 \, D_U^{-1},
  \mbox{ hence} \quad
  S_{(1,\dots,1)} = D_U \, \widetilde S_{(0,\dots,0)} \, D_U^{-1},
\end{equation*}
and enables us to implement the subdivision scheme $S_1$ in terms of
$\widetilde S_0$ and the shear operator $D_U$. Moreover, it explains
the geometry of the scheme $S_1$: first, a shearing by $U^{-1}$ is
applied to the data sequence, then the subdivision operator refines
the data in the sheared direction with a higher resolution than the
data in the non--sheared direction, so that the additional
application of the shearing by $U$ does not fully compensate the
initial one. In summary, this process leads to limit functions which
are sheared versions of the limit function of $S_0$ and the amount
of shearing is determined by when and how often $S_1$ is applied in
the process. We remark that this geometry is very much in the spirit
of the Continuous Shearlet Transform, which can be regarded as
applying a shearing operator, an anisotropic 2-D Wavelet Transform,
and again a shearing operator \cite{KS07}.

\section{Convergence}
\label{sec:convergence}

In this section, we shall study convergence of the previously
introduced adaptive directional subdivision schemes. To that end, we
introduce the \emph{projection operators} $P_n \;:\; E_\infty \to
E_n$, $n \in \NN$, which extract the initial segment of order $n$
from a sequence: $P_n \eps = \left( \eps_1,\dots,\eps_n \right)$.

\begin{definition}
\label{def:convergence}
  The adaptive directional subdivision scheme is said to be
  \emph{convergent in $C \left( \R^2 \right)$}, if, for any
  $\eps \in E_\infty$, there exists a nonzero uniformly continuous
  function $f_\eps \in C \left( \R^2 \right)$
  such that
  \begin{equation*}
    \lim_{n \to \infty} \sup_{\alpha \in \Z^2}
    \left| f_\eps \left( W_{P_n \eps}^{-1} \alpha \right) -
        S_{P_n \eps} \delta (\alpha) \right|
    = 0.
  \end{equation*}
  Note that this is equivalent to
  \begin{equation*}
    \lim_{n \to \infty} \sup_{\alpha \in \Z^2}
    \left| f_\eps \left( W_{P_n \eps}^{-1} \alpha \right) -
      a_{P_n \eps} (\alpha) \right| = 0.
  \end{equation*}
\end{definition}

\noindent Since any sequence $c \in \ell \left( \Z^2 \right)$ can be
trivially written as
$$
c = \sum_{\alpha \in \Z^2} c(\alpha) \, \delta \left( \cdot - \alpha
\right), \qquad \delta (\alpha) := \delta_{\alpha,0},
$$
and since the subdivision operator is linear, we immediately obtain
the following convolution style representation of the limit
function.

\begin{proposition}
  If the adaptive directional subdivision scheme converges for some
  $\eps \in E_\infty$ then
  the limit function takes the form
  $$
  f_\eps * c =
  \sum_{\alpha \in \Z^2}  c(\alpha) \, f_\eps \left( \cdot - \alpha \right).
  $$
\end{proposition}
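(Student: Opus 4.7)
My plan is to deduce the formula by pure linearity of the subdivision operator combined with its covariance under the translation action on the lattice $W_{P_n\eps}\Z^2$. The scheme $S_{P_n\eps}$ is linear in its input sequence, so writing $c = \sum_{\alpha \in \Z^2} c(\alpha)\,\delta(\cdot - \alpha)$ and pushing the operator through the sum yields
\[
 S_{P_n\eps} c = \sum_{\alpha \in \Z^2} c(\alpha)\, S_{P_n\eps}[\delta(\cdot - \alpha)].
\]
The next step is to identify each summand using Lemma~\ref{lemma:IterMasks}: applying the formula $S_{P_n\eps}c(\beta) = \sum_\gamma a_{P_n\eps}(\beta - W_{P_n\eps}\gamma)\,c(\gamma)$ to the atomic input $\delta(\cdot - \alpha)$ gives
\[
 S_{P_n\eps}[\delta(\cdot - \alpha)](\beta)
 \;=\; a_{P_n\eps}(\beta - W_{P_n\eps}\alpha)
 \;=\; S_{P_n\eps}\delta(\beta - W_{P_n\eps}\alpha),
\]
which is precisely the desired translation--covariance at refinement level $n$.

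Having reduced the problem to translates of $S_{P_n\eps}\delta$, I would invoke the convergence hypothesis from Definition~\ref{def:convergence}: replacing the lattice variable $\beta$ in the supremum by $\beta - W_{P_n\eps}\alpha$ yields
\[
 \bigl| S_{P_n\eps}\delta(\beta - W_{P_n\eps}\alpha) - f_\eps\bigl(W_{P_n\eps}^{-1}\beta - \alpha\bigr) \bigr| \longrightarrow 0
\]
uniformly in $(\beta,\alpha) \in \Z^2 \times \Z^2$. Summing against $c(\alpha)$ and reading the spatial variable at level $n$ as $x := W_{P_n\eps}^{-1}\beta$, the right--hand side becomes exactly $(f_\eps * c)(x) = \sum_\alpha c(\alpha)\,f_\eps(x - \alpha)$, which is the claimed representation of the limit.

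The only mild technical point is the interchange of the $\alpha$--summation with the limit in $n$. This is harmless: every mask $a_{P_n\eps}$ is finitely supported, being the iterated composition of finitely many finitely supported masks $a_{\eps_j}$ via Lemma~\ref{lemma:IterMasks}, so for each fixed $\beta$ and $n$ the sum $\sum_\alpha c(\alpha)\, a_{P_n\eps}(\beta - W_{P_n\eps}\alpha)$ is actually a \emph{finite} sum. Combined with the compact support and uniform continuity of $f_\eps$ (standard consequences of convergence with finitely supported masks), this makes the passage to the limit automatic, and I expect no real obstacle beyond writing the error estimate cleanly.
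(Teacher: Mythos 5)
Your argument is correct and is essentially the paper's own: the paper simply notes that writing $c = \sum_{\alpha} c(\alpha)\,\delta(\cdot-\alpha)$ and invoking linearity of the subdivision operator yields the convolution form immediately, and your translation--covariance computation via Lemma~\ref{lemma:IterMasks} together with the appeal to Definition~\ref{def:convergence} is just that observation written out in detail. The extra care you take with the locally finite interchange of the $\alpha$-sum and the limit in $n$ goes slightly beyond what the paper records, but it is the same route.
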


\subsection{Basic Properties}
This definition of convergence has an immediate consequence: If the
adaptive directional subdivision scheme is a convergent one, then,
in particular, $a_0$ and $a_1$ must define convergent adaptive
directional subdivision schemes, which follows by simply choosing
$\eps = (0,0,\dots)$ and $\eps = (1,1,\dots)$, respectively.
Consequently, they must both preserve constants.

\begin{lemma}\label{lemma:subdivisionconvergent}
  If the adaptive directional subdivision scheme is convergent, then
  \begin{equation}
    \label{eq:ConSumRule0}
    \sum_{\beta \in \Z^2} a_\eps \left( \alpha + W_\eps \beta \right)
    = 1, \qquad \alpha \in \Z^2, \qquad \eps \in \{0,1\}.
  \end{equation}
\end{lemma}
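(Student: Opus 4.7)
The plan is to reduce the claim to the stationary case and then run a coset-counting argument combined with a Riemann-sum asymptotic.

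First I fix $\eta \in \{0,1\}$ and specialize Definition~\ref{def:convergence} to the constant index sequence $\eta^\infty := (\eta,\eta,\ldots) \in E_\infty$, so that $W_{P_n \eta^\infty} = W_\eta^n$ and $S_{P_n \eta^\infty} = S_\eta^n$. Convergence of the adaptive scheme then degenerates to convergence of the stationary scheme $S_\eta$ (mask $a_\eta$, dilation $W_\eta$) to a nonzero uniformly continuous basic limit $\phi_\eta := f_{\eta^\infty}$. Since $\supp a_{P_n \eta^\infty} \subset \sum_{k=0}^{n-1} W_\eta^k \supp a_\eta$ and $W_\eta^{-1}$ has eigenvalues $\tfrac14,\tfrac12$, rescaling by $W_\eta^{-n}$ keeps the support in a fixed compact set, so $\phi_\eta$ is compactly supported.

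Next, for each coset representative $r$ of $\Z^2/W_\eta\Z^2$ I set $N_r^{(n)} := \sum_{\beta \in \Z^2} a_{P_n \eta^\infty}(r + W_\eta\beta)$. Using $a_{P_{n+1}\eta^\infty} = S_\eta a_{P_n \eta^\infty}$ and the coset-preserving substitution $\alpha - W_\eta\gamma$ inside the inner sum, one obtains $N_r^{(n+1)} = \tau_\eta(r)\,\sum_\gamma a_{P_n\eta^\infty}(\gamma)$, where $\tau_\eta(r):=\sum_\beta a_\eta(r+W_\eta\beta)$ is exactly the quantity the lemma asserts equals $1$. Iterating gives
\[
N_r^{(n)} \;=\; \tau_\eta(r)\,S^{\,n-1}, \qquad S := \sum_{\alpha \in \Z^2} a_\eta(\alpha).
\]
On the other hand, uniform convergence $a_{P_n\eta^\infty}(\alpha) \to \phi_\eta(W_\eta^{-n}\alpha)$ combined with the Riemann-sum approximation for $\phi_\eta$ on the refining lattice $W_\eta^{-(n-1)}\Z^2$, of which only $O(|\det W_\eta|^{n-1})$ points contribute, gives
\[
N_r^{(n)} \;=\; |\det W_\eta|^{n-1} \int_{\R^2} \phi_\eta \;+\; o\!\left(|\det W_\eta|^{n-1}\right).
\]
Dividing both formulas by $|\det W_\eta|^{n-1}$ yields $\tau_\eta(r)\bigl(S/|\det W_\eta|\bigr)^{n-1} \to \int \phi_\eta$ for every coset $r$. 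If the ratio exceeds $1$ in absolute value, the left side blows up unless every $\tau_\eta(r)$ vanishes, forcing $S = \sum_r \tau_\eta(r) = 0$, which contradicts $|S| > |\det W_\eta|$. In the remaining case $S = |\det W_\eta|$, the left side stabilizes at $\tau_\eta(r) = \int \phi_\eta$, independent of $r$, and averaging $\sum_r \tau_\eta(r) = S = |\det W_\eta|$ fixes the common value at $1$, delivering the sum rule.

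The main obstacle is the degenerate case $|S| < |\det W_\eta|$, where the left-hand side vanishes for any $\tau_\eta$ and only forces $\int \phi_\eta = 0$. To rule it out, I would derive the stationary refinement equation $\phi_\eta(x) = \sum_\alpha a_\eta(\alpha)\,\phi_\eta(W_\eta x - \alpha)$ directly---by equating the two applications of the preceding Proposition to $S_\eta^{n+1}\delta = S_\eta^n a_\eta$ in the limit $n\to\infty$---and then pass to Fourier to obtain the infinite-product representation $\hat\phi_\eta(\omega) = \prod_{k \ge 1} |\det W_\eta|^{-1}\,\hat a_\eta(W_\eta^{-kT}\omega)$. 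Continuity of $\hat\phi_\eta$ combined with $\phi_\eta \not\equiv 0$ then forces $\hat\phi_\eta(0) = \int \phi_\eta \ne 0$, excluding the bad case and completing the proof.
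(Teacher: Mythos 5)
Your overall route is genuinely different from the paper's. The paper disposes of this lemma in two lines: choosing $\eps=(0,0,\dots)$ and $\eps=(1,1,\dots)$ reduces the statement to the two stationary schemes $(a_0,W_0)$ and $(a_1,W_1)$, and then it simply invokes the classical fact that a convergent stationary scheme must preserve constants. You perform the same reduction but then re-prove that classical fact from scratch, and most of your argument is sound: the recursion $N_r^{(n+1)}=\tau_\eta(r)\sum_\gamma a_{P_n\eta^\infty}(\gamma)$, hence $N_r^{(n)}=\tau_\eta(r)S^{n-1}$, is correct (it is exactly Lemma \ref{lemma:IterMasks} plus a coset substitution), the support/compactness argument and the Riemann-sum asymptotics $N_r^{(n)}=|\det W_\eta|^{n-1}\int\phi_\eta+o(|\det W_\eta|^{n-1})$ are fine since the number of contributing lattice points per coset is $O(8^{n-1})$ and $\|W_\eta^{-n}\|\to 0$, and the cases $|S|>8$ and $S=8$ are handled correctly. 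One small omission: your trichotomy skips $S=-8$; there $\tau_\eta(r)(-1)^{n-1}\to\int\phi_\eta$ forces $\tau_\eta(r)=0$ for all $r$, hence $S=0$, a contradiction, so it folds into the first case.

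The genuine problem is the final step excluding $|S|<|\det W_\eta|$. The principle you invoke -- that continuity of $\hat\phi_\eta$ together with $\phi_\eta\not\equiv 0$ forces $\hat\phi_\eta(0)=\int\phi_\eta\neq 0$ -- is false for refinable functions in general: the derivative of the quadratic cardinal B-spline is continuous, compactly supported, nonzero and refinable (with mask sum $4>2$), yet has integral zero. Moreover, the infinite-product representation you write down is only legitimate after one already knows $\frac1{|\det W_\eta|}\hat a_\eta(0)=1$, i.e.\ after the sum rule is known, so as stated the step is circular. The conclusion you need is nevertheless within reach of the tools you name, just argued differently: from the stationary refinement equation (which you do get from Theorem \ref{T:RefEq} with constant index, or by your limiting argument), Fourier transform gives $\hat\phi_\eta(\omega)=m(W_\eta^{-T}\omega)\,\hat\phi_\eta(W_\eta^{-T}\omega)$ with $m(\xi)=\frac1{8}\sum_\alpha a_\eta(\alpha)e^{-i\alpha\cdot\xi}$. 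Iterating $n$ times and using $\|\hat\phi_\eta\|_\infty\le\|\phi_\eta\|_{L^1}<\infty$ yields $|\hat\phi_\eta(\omega)|\le\|\phi_\eta\|_{L^1}\prod_{k=1}^{n}|m(W_\eta^{-kT}\omega)|$; since $W_\eta^{-kT}\omega\to 0$ and $|m(0)|=|S|/8<1$ in the case at hand, the product decays geometrically, so $\hat\phi_\eta\equiv 0$, contradicting $\phi_\eta\neq 0$. With that replacement (and the $S=-8$ remark) your proof closes; it buys a self-contained verification of the stationary necessary condition, at the cost of considerably more machinery than the paper's one-line reduction to the known stationary result.
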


An alternative but equivalent definition of convergence of a
adaptive directional subdivision scheme can be given in terms of
function spaces instead of sequence spaces by means of test
functions.

\begin{definition}
  A function $g \in C \left( \R^2 \right)$ is called a \emph{test function},
  if it is compactly supported and its integer translates form a stable
  partition of unity, that is,
  \begin{enumerate}
  \item $\sum_\alpha g \left( \cdot - \alpha \right) = 1$,
  \item there exist constants $0 < A < B < \infty$ such that for any
    $c \in \ell_\infty$
    \[
    A \, \left\| c \right\|_\infty \le \left\| g * c \right\|_\infty \le
    B \| c \|_\infty, \qquad g*c := \sum_{\alpha \in \Z^2} c(\alpha)
    \, g \left( \cdot - \alpha \right).
    \]
  \end{enumerate}
\end{definition}

The most prominent examples for test functions are the tensor
product B--Splines so that there even exist \emph{refinable} test
functions of arbitrary regularity. With the help of test functions,
convergence can be described as follows.

\begin{theorem}\label{T:AlterConvDesc}
  The adaptive directional subdivision scheme converges if and only if for any
  $\eps \in E_\infty$ there exists a nonzero uniformly continuous function
  $f_\eps$ such that
  \begin{equation}
    \label{eq:AlterConvDesc}
    \lim_{n \to \infty} \left\| f_\eps - \left( g * S_{P_n \eps}
        \delta \right) \left( W_{P_n \eps} \right) \right\|_\infty = 0
  \end{equation}
  \begin{enumerate}
  \item for some test function $g$.
  \item for any test function $g$.
  \end{enumerate}
\end{theorem}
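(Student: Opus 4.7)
The plan is to prove both (i) and (ii) by establishing a single central equivalence: for each $\eps \in E_\infty$, convergence in the sense of Definition \ref{def:convergence} holds if and only if (\ref{eq:AlterConvDesc}) holds for any fixed test function $g$. Since the sequence-based condition makes no reference to $g$, this immediately forces (i) and (ii) to coincide. The argument rests on four ingredients: the partition-of-unity identity $\sum_\alpha g(\cdot-\alpha)=1$, the compact support of $g$, the stability bounds $A\|c\|_\infty\le\|g*c\|_\infty\le B\|c\|_\infty$, and the uniform-in-$\eps$ estimate $\|W_{P_n\eps}^{-1}\|=O(2^{-n})$, which follows from Lemma \ref{lemma:productsofM} since $(\eps)_2<2^n$ forces every entry of $M_{P_n\eps}=W_{P_n\eps}^{-1}$ to decay like $2^{-n}$ independently of the branch $\eps$.

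For the forward implication, write $c_n:=S_{P_n\eps}\delta$ and use the partition of unity to express
\[
f_\eps(x)-(g*c_n)(W_{P_n\eps}x)=\sum_\alpha g(W_{P_n\eps}x-\alpha)\bigl[f_\eps(x)-c_n(\alpha)\bigr].
\]
I would insert $\pm f_\eps(W_{P_n\eps}^{-1}\alpha)$ inside the bracket. Compact support of $g$ restricts the sum to $\alpha$ with $|W_{P_n\eps}x-\alpha|\le R$, hence $|x-W_{P_n\eps}^{-1}\alpha|\le R\|W_{P_n\eps}^{-1}\|$. One piece is then dominated by $\sup_\alpha|f_\eps(W_{P_n\eps}^{-1}\alpha)-c_n(\alpha)|$ (small by Definition \ref{def:convergence}), and the other by the modulus of uniform continuity of $f_\eps$ at a scale tending to zero uniformly in $\eps$.

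For the converse, introduce the error sequence $c'_n(\alpha):=c_n(\alpha)-f_\eps(W_{P_n\eps}^{-1}\alpha)$. Lower stability gives $\|c'_n\|_\infty\le A^{-1}\|g*c'_n\|_\infty$, and a direct computation shows
\[
(g*c'_n)(W_{P_n\eps}x)=(g*c_n)(W_{P_n\eps}x)-h_n(x),\qquad h_n(x):=\sum_\alpha f_\eps(W_{P_n\eps}^{-1}\alpha)\,g(W_{P_n\eps}x-\alpha).
\]
By hypothesis the first summand converges uniformly to $f_\eps$; by the same partition-of-unity plus uniform-continuity argument as in the forward direction, so does $h_n$. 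Hence $\|g*c'_n\|_\infty\to 0$, and stability forces $\|c'_n\|_\infty\to 0$, which is exactly Definition \ref{def:convergence}.

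The only delicate aspect of this otherwise fairly standard equivalence is that every estimate must be uniform in $\eps\in E_\infty$; this is guaranteed from the outset by the uniform decay of $\|W_{P_n\eps}^{-1}\|$ furnished by Lemma \ref{lemma:productsofM}, so no case analysis on the branch $\eps$ of the binary tree is needed, and the non-stationary character of the adaptive scheme causes no additional trouble beyond what is already encoded in the uniform norm bound.
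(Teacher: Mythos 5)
Your proposal is correct and follows essentially the same route as the paper: both reduce the statement to the equivalence between \eqref{eq:AlterConvDesc} and the $g$-independent sequence condition of Definition~\ref{def:convergence}, controlling the quasi-interpolation error by $\omega\left(f_\eps,\left\|W_{P_n\eps}^{-1}\right\|\right)$ (with $\left\|W_{P_n\eps}^{-1}\right\|\to 0$ by Lemma~\ref{lemma:productsofM}) and using the stability constants $A,B$ of the test function, the only difference being that you prove the quasi-interpolant estimate directly via the partition of unity and compact support, where the paper quotes the Dahmen--Micchelli bound. (A negligible quibble: $W_{P_n\eps}^{-1}$ equals $M_{r(P_n\eps)}$, the product in reversed order, rather than $M_{P_n\eps}$; since the reversed sequence lies in the same $E_n$, your uniform $O(2^{-n})$ bound is unaffected.)
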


\begin{proof}
  For classical subdivision, this result is due to Dahmen and Micchelli
  \cite{DahmenMicchelli97} and we just show how it can be extended in
  a straightforward way to adaptive directional subdivision. To that end, let
  $g$ be any test function and recall that for any \emph{uniformly
    continuous} function $f$ and any expanding matrix $M$ the
  ``quasi-interpolant''
  \[
  g * \sigma_{M} f = \sum_{\alpha \in \Z^2} f \left( M \, \alpha
  \right) \, g (\cdot - \alpha),
  \]
  with the \emph{sampling operator}
  $\sigma_{M} := \left( f { \left( M \, \alpha \right) } \;:\;
    \alpha \in \Z^2 \right)$, satisfies
  \begin{equation*}
    \left\| f - g * \sigma_{M^{-1}} f \left( M \cdot \right)
    \right\|_\infty \le C_g
    \, \omega \left( f, \left\| M^{-1} \right\| \right),
  \end{equation*}
  where
  \[
  \omega \left( f,\delta \right) := \sup_{x \in \R^2} \sup_{\| x-y \|_\infty
    \le \delta} \left| f(x) - f(y) \right|,
  \]
  denotes the modulus of continuity of $f$. Recall that
  $\omega \left( f,\delta \right) \to 0$ for $\delta \to 0$ as long as $f$
  is uniformly continuous. Now, we have that
  \begin{eqnarray*}
    \lefteqn{
      \left\| f_\eps - \left( g * S_{P_n \eps}
          \delta \right) \left( W_{P_n \eps} \cdot \right) \right\|_\infty
    } \\
    & \le & \left\| f_\eps - \left( g * \sigma_{W_{P_n \eps}^{-1}}
        f_\eps \right)
      \left( W_{P_n \eps} \cdot \right) \right\|_\infty
    + \left\| g * \left( \sigma_{W_{P_n \eps}^{-1}} f_\eps - S_{P_n
          \eps} \delta \right) \left( W_{P_n \eps} \cdot
        \right) \right\|_\infty \\
    & = & \left\| f_\eps - \left( g * \sigma_{W_{P_n \eps}^{-1}}
        f_\eps \right)
      \left( W_{P_n \eps} \cdot \right) \right\|_\infty
    + \left\| g * \left( \sigma_{W_{P_n \eps}^{-1}} f_\eps - S_{P_n
          \eps} \delta \right) \right\|_\infty \\
    & \le & C_g \,
    \omega \left( f, \left\| W_{P_n \eps}^{-1} \right\| \right) +
      B \, \left\| \sigma_{W_{P_n \eps}^{-1}} f_\eps - S_{P_n
          \eps} \delta \right\|_\infty.
  \end{eqnarray*}
  On the other hand,
  \begin{eqnarray*}
    \lefteqn{
      \left\| \sigma_{W_{P_n \eps}^{-1}} f_\eps - S_{P_n
          \eps} \delta \right\|_\infty
      \le A^{-1} \left\| g * \left( \sigma_{W_{P_n \eps}^{-1}}
          f_\eps - S_{P_n \eps} \delta \right)
        \left( W_{P_n \eps} \cdot
        \right) \right\|_\infty
    } \\
    & \le & A^{-1} \left( \left\| f_\eps - \left( g *
        \sigma_{W_{P_n \eps}^{-1}} f_\eps \right)
      \left( W_{P_n \eps} \cdot \right) \right\|_\infty
    + \left\| f_\eps - \left( g * S_{P_n \eps} \delta \right)
        \left( W_{P_n \eps} \cdot
        \right) \right\|_\infty \right) \\
    & \le & A^{-1} \left( C_g \,
    \omega \left( f, \left\| W_{P_n \eps}^{-1} \right\| \right) +
    \left\| f_\eps - \left( g * S_{P_n \eps} \delta \right)
        \left( W_{P_n \eps} \cdot
        \right) \right\|_\infty \right)
  \end{eqnarray*}
  which verifies the equivalence. Since therefore convergence of the
  adaptive directional subdivision scheme is equivalent to \eqref{eq:AlterConvDesc} holding for
  an arbitrary test function, this property holds for one particular
  test function if and only if it holds for any test function.
\end{proof}

\begin{theorem}\label{T:RefEq}
  If the adaptive directional subdivision scheme converges, then the limit functions
  $f_\eps$, $\eps \in E_\infty$, satisfy the
  \emph{refinement equation}
  \begin{equation}
    \label{eq:RefEq}
    f_\eps = \sum_{\alpha \in \Z^2} a_{\eps_1} (\alpha) \,
    f_{\widehat \eps} \left( W_{\eps_1} \cdot - \alpha \right),
    \qquad \widehat\eps := \left( \eps_2,\eps_3,\dots \right).
  \end{equation}
\end{theorem}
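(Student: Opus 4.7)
My plan is to derive the refinement equation by pushing a semigroup identity for the iterated subdivision operators through the convergence hypothesis, applied twice: once for $\eps$ and once for $\widehat\eps:=(\eps_2,\eps_3,\dots)$. The key observation is that $S_{P_n\eps}=S_{P_{n-1}\widehat\eps}\circ S_{\eps_1}$ and correspondingly $W_{P_n\eps}=W_{P_{n-1}\widehat\eps}\,W_{\eps_1}$, so applying both sides to the Kronecker delta and invoking Lemma~\ref{lemma:IterMasks} yields the finite--sum identity
\[
a_{P_n\eps}(\alpha)=\sum_{\beta\in\Z^2}a_{\eps_1}(\beta)\,a_{P_{n-1}\widehat\eps}\bigl(\alpha-W_{P_{n-1}\widehat\eps}\beta\bigr),\qquad\alpha\in\Z^2,
\]
with finiteness of the sum guaranteed by the compact support of $a_{\eps_1}$.

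Next I would set $x:=W_{P_n\eps}^{-1}\alpha$ and simplify using the matrix factorization: $W_{P_{n-1}\widehat\eps}^{-1}\bigl(\alpha-W_{P_{n-1}\widehat\eps}\beta\bigr)=W_{\eps_1}x-\beta$. Thus the convergence hypothesis for $\widehat\eps$ applied to $P_{n-1}\widehat\eps$ gives, uniformly in $\alpha\in\Z^2$ and for each $\beta$ in the finite support of $a_{\eps_1}$,
\[
\bigl|a_{P_{n-1}\widehat\eps}(\alpha-W_{P_{n-1}\widehat\eps}\beta)-f_{\widehat\eps}(W_{\eps_1}x-\beta)\bigr|\longrightarrow 0.
\]
Summing these finitely many estimates against $a_{\eps_1}(\beta)$ and combining with the convergence hypothesis for $\eps$ itself then delivers
\[
\sup_{\alpha\in\Z^2}\Biggl|f_\eps(W_{P_n\eps}^{-1}\alpha)-\sum_{\beta\in\Z^2}a_{\eps_1}(\beta)\,f_{\widehat\eps}\bigl(W_{\eps_1}W_{P_n\eps}^{-1}\alpha-\beta\bigr)\Biggr|\longrightarrow 0.
\]

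The closing step lifts this identity from the lattice $W_{P_n\eps}^{-1}\Z^2$ to all of $\R^2$. By Lemma~\ref{lemma:productsofW}, the operator norm of $W_{P_n\eps}^{-1}$ is of order $2^{-n}$, so this lattice becomes arbitrarily dense. Since both $f_\eps$ (by hypothesis) and the right--hand side $\sum_\beta a_{\eps_1}(\beta)\,f_{\widehat\eps}(W_{\eps_1}\,\cdot\,-\beta)$, a finite sum of uniformly continuous functions, are uniformly continuous on $\R^2$, a standard approximation argument extends the identity from the dense lattice to every $x\in\R^2$, which is exactly \eqref{eq:RefEq}.

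I expect the only technical care to lie in keeping the termwise approximation uniform in $\alpha\in\Z^2$: we have no pointwise control over individual values of $a_{\eps_1}$, but the finiteness of its support is precisely what allows the interchange of the limit in $n$ with the summation over $\beta$ without any quantitative loss. Once that uniformity is in place, the density of $W_{P_n\eps}^{-1}\Z^2$ in $\R^2$ (immediate from the explicit expanding form of $W_{P_n\eps}$ in Lemma~\ref{lemma:productsofW}) closes the argument essentially for free.
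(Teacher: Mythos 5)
Your argument is correct, but it follows a genuinely different route from the paper's. The paper introduces the transition operator $T_\eta f = \sum_{\alpha} a_\eta(\alpha)\, f(W_\eta\cdot-\alpha)$, proves the duality $\left( T_\eta f \right) * c = \left( f * S_\eta c \right)\left( W_\eta\cdot \right)$, and then bounds $\left\| T_{\eps_1} f_{\widehat\eps} - f_\eps \right\|_\infty$ by terms that vanish as $n\to\infty$, using the test-function characterization of convergence (Theorem \ref{T:AlterConvDesc}); since the left-hand side does not depend on $n$, it concludes $T_{\eps_1} f_{\widehat\eps} = f_\eps$. You instead work entirely at the discrete level: the factorization $S_{P_n\eps} = S_{P_{n-1}\widehat\eps}\, S_{\eps_1}$ gives $a_{P_n\eps} = S_{P_{n-1}\widehat\eps}\, a_{\eps_1}$, and Definition \ref{def:convergence} applied to both $\eps$ and $\widehat\eps$ — the latter being legitimate since the hypothesis covers every element of $E_\infty$, and its uniformity over all lattice points absorbs the shifts $\alpha - W_{P_{n-1}\widehat\eps}\beta$ with $\beta$ ranging over the finite support of $a_{\eps_1}$ — yields the desired identity on the lattices $W_{P_n\eps}^{-1}\Z^2$; since $\left\| W_{P_n\eps}^{-1} \right\| = O(2^{-n})$ by Lemma \ref{lemma:productsofW}, these lattices have mesh tending to zero, and uniform continuity of $f_\eps$ and $f_{\widehat\eps}$ upgrades the lattice identity to all of $\R^2$. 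Your proof is the more elementary one: it bypasses test functions and the transition operator altogether and needs only the mask recursion of Lemma \ref{lemma:IterMasks}, compact support of $a_{\eps_1}$, and uniform continuity; the paper's version, on the other hand, re-uses machinery it has already established (Theorem \ref{T:AlterConvDesc} and the operator $T_\eta$), which keeps that proof short and puts the refinement relation directly in the fixed-point form $T_{\eps_1} f_{\widehat\eps} = f_\eps$. Both arguments are sound.
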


\begin{proof}
  We define the \emph{transition operator}
  \[
  T_\eps f = \sum_{\alpha \in \Z^2} a_\eps (\alpha) \, f \left(
    W_\eps \cdot - \alpha \right), \qquad f \in C \left( \R^2 \right),
    \; \eps \in \{0,1\}
  \]
  and note that,  for $c \in \ell_\infty$,
  \begin{eqnarray*}
    \left( T_\eps f \right) * c
    & = & \sum_{\alpha \in \Z^2} T_\eps f \left( \cdot - \alpha
    \right) \, c (\alpha)
    = \sum_{\alpha,\beta \in \Z^2} a_\eps (\beta) \, c (\alpha)
    f \left( W_\eps \cdot - W_\eps \alpha - \beta \right) \\
    & = & \sum_{\beta \in \Z^2} \left( \sum_{\alpha \in \Z^2} a_\eps
      \left( \beta - W_\eps \alpha \right) \, c(\alpha) \right)
    \, f \left( W_\eps \cdot - \beta \right)
    = \left( f * S_\eps c \right) \left( W_\eps \cdot \right).
  \end{eqnarray*}
  By iteration, we then find for $\eps \in \{0,1\}^n$ that
  \begin{eqnarray*}
    \left( f * S_\eps c \right) \left( W_\eps \cdot \right)
    & = & \left( f * S_{\eps_n} \cdots S_{\eps_1} c \right)
    \left( W_{\eps_n} \cdot \ldots \cdot W_{\eps_1} \cdot \right) \\
    & = & \left( T_{\eps_n} f * S_{\eps_{n-1}} \cdots
      S_{\eps_1} c \right)
    \left( W_{\eps_{n-1}} \cdot \ldots \cdot W_{\eps_1} \cdot \right)
    = \ldots = \left( T_\eps f * c \right)
  \end{eqnarray*}
  where
  \[
  T_\eps f = T_{\eps_1} \cdots T_{\eps_n} f, \qquad \eps
  \in \{0,1\}^n.
  \]
  Since, for $n \in \N$,
  \begin{eqnarray*}
    \lefteqn{
      T_{\eps_1} f_{\widehat \eps}
      = T_{\eps_1} f_{\widehat \eps} * \delta
      = \left( f_{\widehat \eps} * S_{\eps_1} \delta \right)
      \left( W_{\eps_1} \cdot \right) } \\
    & = & \left[ \left( f_{\widehat \eps} - \left( g * S_{P_{n-1}
            \widehat \eps}  \delta \right) \left( W_{P_{n-1} \widehat
            \eps}
        \right) \right) * S_{\eps_1} \delta \right]
    \left( W_{\eps_1} \cdot \right)\\
&&    + \left[ \left( g * S_{P_{n-1} \widehat \eps} \delta \right)
\left(
        W_{P_{n-1} \widehat \eps} \right) * S_{\eps_1} \delta \right]
    \left( W_{\eps_1} \cdot \right)
    \\
    & = & \left[ \left( f_{\widehat \eps} - \left( g * S_{P_{n-1}
            \widehat \eps} \delta  \right)
        \left( W_{P_{n-1} \widehat \eps}
        \right) \right) * S_{\eps_1} \delta \right]
    \left( W_{\eps_1} \cdot \right) + \left( g * S_{P_n \eps} \delta
    \right) \left( W_{P_n \eps} \cdot \right),
  \end{eqnarray*}
  it follows that
  \begin{eqnarray*}
    \lefteqn{\left\| T_{\eps_1} f_{\widehat \eps} - f_\eps
    \right\|_\infty}\\
    & \le &
    \left\| \left( f_{\widehat \eps} - \left( g * S_{P_{n-1} \widehat
            \eps} \right) \left( W_{P_{n-1} \widehat \eps} \right)
      \right) * S_{\eps_1} \delta \right\|_\infty
    + \left\| f_{\eps} - \left( g * S_{P_n \eps} \delta
      \right) \left( W_{P_n \eps} \cdot \right) \right\|_\infty \\
    & \le & \left\| S_{\eps_1} \right\| \,
    \left\| f_{\widehat \eps} - \left( g * S_{P_{n-1} \widehat
          \eps} \right) \left( W_{P_{n-1} \widehat \eps} \right)
    \right\|_\infty
    + \left\| f_{\eps} - \left( g * S_{P_n \eps} \delta
      \right) \left( W_{P_n \eps} \cdot \right) \right\|_\infty
  \end{eqnarray*}
  and the right hand side of this inequality converges to zero for
  $n \to \infty$ while the left hand side is independent of $n$. Thus
  $T_{\eps_1} f_{\widehat \eps} = f_\eps$ which is
  (\ref{eq:RefEq}).
\end{proof}

\subsection{An Algebraic Description, Sum Rules and Polynomial Reproduction}
Next, we give a more detailed description of the necessary condition
(\ref{eq:ConSumRule0}) from Lemma~\ref{lemma:subdivisionconvergent}
in algebraic terms. To that end, we recall the definition of the
\emph{symbol} of a mask $a$, defined as
\[
a^* (z) = \sum_{\alpha \in \Z^2} a(\alpha) \, z^\alpha, \qquad z \in
\C_*^2 = \left( \C \setminus \{0\} \right)^2,
\]
as well as the \emph{subsymbols}
\[
a_{\eps,\eta}^* (z) = \sum_{\alpha \in \Z^2} a \left( \eta + W_\eps
  \alpha \right) \, z^\alpha, \qquad \eta \in H_\eps := W_\eps^T \,
\left[ 0,1 \right)^2 \cap \Z^2, \qquad \eps \in \{0,1\}.
\]
The symbol can be ``reconstructed'' from the subsymbols by the
well--known formula
\begin{equation*}
  a^* (z) = \sum_{\eta \in H_\eps} z^\eta \, a_{\eps,\eta}^* \left(
    z^{W_\eps} \right), \qquad \eps \in \{0,1\},
\end{equation*}
from which the following result follows immediately, cf.
\cite{Sauer02b}.

\begin{proposition}
  The mask $a_\eps$ satisfies (\ref{eq:ConSumRule0}), the
  \emph{sum rule of order $0$}, if and only if
  \[
  a^* (z) = 0, \qquad z \in \left\{ e^{-2\pi i W_\eps^{-T} \eta} \;:\;
    \eta \in H_\eps \setminus \{0\} \right\}.
  \]
\end{proposition}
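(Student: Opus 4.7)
The plan is to recognize the assertion as a finite Fourier inversion on the coset group $G = \Z^2/W_\eps^T \Z^2$, in the standard spirit of translating partition-of-unity conditions into vanishing of the symbol at dual points; compare \cite{Sauer02b}. First, the sum $\sum_{\beta} a_\eps(\alpha + W_\eps\beta)$ depends only on the coset $\alpha + W_\eps\Z^2$, so the sum rule (\ref{eq:ConSumRule0}) is equivalent to
\[
a_{\eps,\eta}^*(1,1) = 1 \qquad \text{for every } \eta \in H_\eps.
\]
Then I would substitute $z = e^{-2\pi i W_\eps^{-T}\xi}$ with $\xi \in H_\eps$ into the reconstruction formula stated just above the proposition. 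A direct computation gives $z^{W_\eps} = e^{-2\pi i W_\eps^T W_\eps^{-T}\xi} = e^{-2\pi i\xi} = (1,1)$ and $z^\eta = e^{-2\pi i\,\xi^T W_\eps^{-1}\eta}$, so the reconstruction formula collapses to
\[
a^*\bigl(e^{-2\pi i W_\eps^{-T}\xi}\bigr) = \sum_{\eta \in H_\eps} e^{-2\pi i\,\xi^T W_\eps^{-1}\eta}\, a_{\eps,\eta}^*(1,1), \qquad \xi \in H_\eps.
\]

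The matrix $F = \bigl(e^{-2\pi i\,\xi^T W_\eps^{-1}\eta}\bigr)_{\xi,\eta \in H_\eps}$ appearing here is (up to scaling) the character table of the finite abelian group $G$; by character orthogonality one has $F^*F = |\det W_\eps|\, I$, and in particular $\sum_{\eta\in H_\eps} e^{-2\pi i\,\xi^T W_\eps^{-1}\eta} = |\det W_\eps|\, \delta_{\xi,0}$. Given this, both directions follow at once. If the sum rule holds, setting $a_{\eps,\eta}^*(1,1) \equiv 1$ in the displayed identity and invoking column orthogonality forces $a^*(e^{-2\pi i W_\eps^{-T}\xi}) = 0$ for $\xi \in H_\eps \setminus \{0\}$. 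Conversely, given the vanishing at the nontrivial characters, inverting the system $F\vec a = \vec b$ together with the total-mass identity $a^*(1,1) = |\det W_\eps|$ (itself obtained by summing (\ref{eq:ConSumRule0}) over one set of coset representatives and implicit in the conventional reading of ``sum rule of order $0$'') recovers $a_{\eps,\eta}^*(1,1) = 1$ for all $\eta$.

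The main technical obstacle in executing this plan is to justify that $\eta \mapsto e^{-2\pi i\,\xi^T W_\eps^{-1}\eta}$ genuinely descends to a character on $G$, so that $F$ is really the character table and orthogonality is applicable. This amounts to checking that $W_\eps W_\eps^{-T} \in \Z^{2\times 2}$, which is a direct computation for the explicit shearlet matrices $W_0$ and $W_1$ from (\ref{eq:W0andW1}) and is the single place where the specific structure of the shearlet scaling matrices (rather than just the general finite-Fourier formalism) actually enters the argument.
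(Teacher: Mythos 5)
Your argument is correct and is essentially the proof the paper intends: the paper offers no details beyond the subsymbol reconstruction formula and the citation to \cite{Sauer02b}, and your evaluation of that formula at $z=e^{-2\pi i W_\eps^{-T}\xi}$, $\xi\in H_\eps$, reducing the sum rule to the vanishing of $a^*$ at the nontrivial characters via finite Fourier (character) orthogonality, is exactly that standard argument. Your aside on the normalization $a^*(1,1)=\left|\det W_\eps\right|$ correctly pins down the implicit convention needed so that the ``if'' direction yields the value $1$ rather than mere constancy of the coset sums, a point the paper likewise leaves tacit.
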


For a more algebraic description, we need the notion of a
\emph{quotient ideal}. Recall that an ideal in $\Lambda$, the ring
of \emph{Laurent polynomials} in two variables, is a subset of
$\Lambda$ that is closed under addition and multiplication by
arbitrary Laurent polynomials. The \emph{quotient ideal} of two
Laurent ideals $I,J$, is defined as
\[
I : J := \left\{ f \in \Lambda \;:\; f \cdot J \subseteq I \right\}
\]
and has the almost obvious property that $I \subseteq I : J$. For
any matrix $X \in \Z^{2 \times 2}$, with column vectors $x_1,x_2$ we
finally define the ideal
\[
\left\langle z^X - 1 \right\rangle := \left\langle z^{x_1} -
1,z^{x_2} - 1 \right\rangle := \left\{ f_1 (z) \left( z^{x_1} - 1
\right) + f_2 (z) \left( z^{x_2} - 1
  \right) \;:\; f_1, f_2 \in \Lambda \right\}
\]
and its special case $\left\langle z - 1 \right\rangle :=
\left\langle z^I - 1 \right\rangle$. Then we have the following
result from \cite{MoellerSauer04}.

\begin{theorem}\label{T:QuotId}
  The mask $a_\eps$ satisfies (\ref{eq:ConSumRule0}), the
  \emph{sum rule of order $0$}, if and only if
  \[
  a^* \in \left\langle
    z^{W_\eps} - 1 \right\rangle : \left\langle z - 1 \right\rangle.
  \]
\end{theorem}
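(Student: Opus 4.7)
The plan is to invoke the previous Proposition, which recasts the sum rule of order $0$ as the pointwise vanishing of $a^*$ on
\[
V_* := \left\{ e^{-2\pi i W_\eps^{-T} \eta} \;:\; \eta \in H_\eps \setminus \{0\} \right\} \subset (\C \setminus \{0\})^2,
\]
and then to translate this vanishing into the claimed ideal membership. Write $I := \langle z^{W_\eps} - 1 \rangle$ and $J := \langle z - 1 \rangle$. The zero set of $I$ in the torus $(\C \setminus \{0\})^2$ is exactly $V := V_* \cup \{(1,1)\}$: writing $z = e^{2\pi i \xi}$, the two generators vanish iff $W_\eps^T \xi \in \Z^2$, and the distinct cosets of $\Z^2 / W_\eps^T \Z^2$ are parametrized by $H_\eps$. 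The zero set of $J$ is the single point $(1,1)$, corresponding to $\eta = 0$.

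Next I would show that $I$ is radical in $\Lambda$, with $\Lambda/I \cong \C^V$ via evaluation. Since $z^{W_\eps \beta} \equiv 1 \pmod{I}$ for every $\beta \in \Z^2$, the class of $z^\alpha$ modulo $I$ depends only on $\alpha \mod W_\eps \Z^2$, so any transversal of $\Z^2/W_\eps \Z^2$ produces a spanning set of cardinality $|\det W_\eps|$ for $\Lambda/I$. Hence $\dim_\C (\Lambda/I) \le |\det W_\eps| = |V|$. Evaluation at the distinct points of $V$ gives a surjection $\Lambda \twoheadrightarrow \C^V$ (by Lagrange interpolation) whose kernel contains $I$, so it factors through $\Lambda/I$; comparing dimensions forces the induced map $\Lambda/I \to \C^V$ to be an isomorphism. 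In particular, $g \in I$ if and only if $g$ vanishes identically on $V$.

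With this radicality in hand, $a^* \in I : J$ translates into the pointwise condition $a^*(p)(p_j - 1) = 0$ for every $p = (p_1,p_2) \in V$ and every $j \in \{1,2\}$. At $p = (1,1)$ both factors $p_j - 1$ already vanish, but at every other point of $V$ some coordinate satisfies $p_j \neq 1$, which forces $a^*(p) = 0$. Hence $a^* \in I : J$ if and only if $a^*$ vanishes on $V_*$, which by the preceding Proposition is exactly the sum rule (\ref{eq:ConSumRule0}). The main obstacle is the middle step: establishing radicality of $I$ in the Laurent ring while carefully juggling the two parallel but distinct appearances of $W_\eps$ (the points of $V$ are indexed by cosets modulo $W_\eps^T \Z^2$, whereas the spanning monomials of $\Lambda/I$ are indexed by cosets modulo $W_\eps \Z^2$, both of cardinality $|\det W_\eps|$). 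Once this dimension count is in place, no further Nullstellensatz machinery is required.
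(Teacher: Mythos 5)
Your argument is correct, and it is worth pointing out that the paper does not actually prove this theorem: it is quoted verbatim from M\"oller--Sauer \cite{MoellerSauer04}, where it arises as a special case of a general quotient-ideal characterization of sum rules. What you have supplied is a self-contained, elementary derivation of exactly the special case needed here. Your key steps all check out: since the columns of $W_\eps$ are linearly independent integer vectors, every common zero of $z^{W_\eps}-1$ in $\C_*^2$ lies on the torus, and these zeros form the finite set $V$ of $\left|\det W_\eps\right|$ points $e^{2\pi i W_\eps^{-T}\eta}$ indexed by $\Z^2/W_\eps^T\Z^2$ (a set closed under negation, so the sign convention in the preceding Proposition is harmless); the monomial classes $z^\alpha$ indexed by $\Z^2/W_\eps\Z^2$ span $\Lambda/\langle z^{W_\eps}-1\rangle$, so your dimension count combined with surjectivity of evaluation (Lagrange interpolation at finitely many distinct points) shows that $\langle z^{W_\eps}-1\rangle$ is precisely the vanishing ideal of $V$; and membership in the colon ideal then reduces to vanishing of $a^*$ at the points of $V$ other than $(1,1)$, which the preceding Proposition identifies with \eqref{eq:ConSumRule0}. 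Your care in distinguishing the two coset structures (points modulo $W_\eps^T\Z^2$, monomials modulo $W_\eps\Z^2$, of equal cardinality) is exactly the bookkeeping needed. The trade-off versus the paper's citation-based route: the general theory in \cite{MoellerSauer04} also covers the higher-order situation used later for polynomial reproduction, where powers $I^{n+1}$ of the quotient ideal appear; a radicality-plus-dimension argument of the kind you use does not extend to those non-radical ideals. For the order-zero statement that Theorem \ref{T:QuotId} actually makes, however, your proof is complete and requires no Nullstellensatz or Gr\"obner/H-basis machinery.
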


\noindent To conveniently formulate an important consequence of this
theorem, we introduce the vectors
\[
\left[ z^X - 1 \right] = \left[
  \begin{array}{c}
    z^{x_1} - 1 \\ z^{x_2} - 1
  \end{array}
\right], \qquad X = \left[ x_1,x_2 \right] \in \ZZ^{2 \times 2}.
\]
With this notation we have the following result.

\begin{corollary}\label{C:QIdRep}
  If the adaptive directional subdivision scheme converges, then there exist
  \emph{matrix valued masks} $B_\eps$, $\eps \in \{0,1\}$
  such that
  \begin{equation}
    \label{eq:QIdRep}
    \left[ z-1 \right] \, a_{\eps}^* (z) = B_\eps^* (z) \, \left[
      z^{W_\eps} - 1 \right], \qquad \eps \in \{0,1\}.
  \end{equation}
\end{corollary}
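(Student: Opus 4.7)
The plan is to read Corollary \ref{C:QIdRep} as a purely algebraic restatement of Theorem \ref{T:QuotId} combined with Lemma \ref{lemma:subdivisionconvergent}, so no new analytic content is needed: the hypothesis of convergence has already been converted into membership in a quotient ideal, and what remains is to unwind the definition of that quotient ideal componentwise.

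Concretely, I would proceed as follows. First, since the adaptive directional subdivision scheme is assumed to converge, Lemma \ref{lemma:subdivisionconvergent} guarantees that both masks $a_0$ and $a_1$ satisfy the sum rule of order $0$ in (\ref{eq:ConSumRule0}). Hence, by Theorem \ref{T:QuotId}, for each $\eps \in \{0,1\}$ we have
\[
a_\eps^* \in \bigl\langle z^{W_\eps} - 1 \bigr\rangle : \bigl\langle z - 1 \bigr\rangle.
\]
By the very definition of a quotient ideal, this means $a_\eps^*(z) \cdot f(z) \in \langle z^{W_\eps} - 1 \rangle$ for every $f \in \langle z - 1 \rangle$, and in particular for the two generators $f(z) = z_1 - 1$ and $f(z) = z_2 - 1$.

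Next, writing $W_\eps = [w_{\eps,1}, w_{\eps,2}]$ in terms of its column vectors, the ideal $\langle z^{W_\eps} - 1 \rangle$ is by definition generated by the two Laurent polynomials $z^{w_{\eps,1}} - 1$ and $z^{w_{\eps,2}} - 1$. Applying this with $f(z) = z_i - 1$ for $i = 1, 2$ produces Laurent polynomials $b_{ij} = b_{ij}^\eps \in \Lambda$ such that
\[
(z_i - 1) \, a_\eps^*(z) = b_{i1}(z) \bigl( z^{w_{\eps,1}} - 1 \bigr) + b_{i2}(z) \bigl( z^{w_{\eps,2}} - 1 \bigr), \qquad i = 1,2.
\]
Collecting these two identities as the two rows of a single matrix equation, and defining $B_\eps^*(z) := [b_{ij}(z)]_{i,j=1,2}$ (which is the symbol of a $2\times 2$ matrix--valued mask $B_\eps$ with finitely supported entries, since each $b_{ij}$ is a Laurent polynomial), we obtain exactly
\[
[z - 1] \, a_\eps^*(z) = B_\eps^*(z) \, [z^{W_\eps} - 1],
\]
which is (\ref{eq:QIdRep}).

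There is essentially no obstacle: the only step that might look like it requires work is the passage from ``$a_\eps^*$ lies in the quotient ideal'' to ``the two cofactor representations exist,'' but this is immediate from the definition of both the quotient ideal and the ideal $\langle z^{W_\eps} - 1 \rangle$. The content of the corollary is purely bookkeeping on top of Theorem \ref{T:QuotId}, and the matrices $B_\eps$ are simply the assembled cofactor matrices produced by this unpacking.
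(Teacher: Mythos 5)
Your proposal is correct and coincides with the paper's own argument: the paper likewise invokes the sum rule from convergence, applies Theorem~\ref{T:QuotId}, unpacks the quotient ideal membership for the generators $z_j-1$, $j=1,2$, to get the two cofactor identities, and assembles them into the matrix equation (\ref{eq:QIdRep}). Your slightly more explicit unwinding of the definitions of the quotient ideal and of $\left\langle z^{W_\eps}-1\right\rangle$ is exactly the bookkeeping the paper leaves implicit.
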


\begin{proof}
  Any convergent subdivision must satisfy the sum rule of order $0$ for
  $a_\eps$, $\eps \in \{0,1\}$, and so, by Theorem~\ref{T:QuotId},
  it follows for $\eps \in \{0,1\}$ and $j=1,2$ that
  \[
  \left( z_j - 1 \right) \, a_\eps^* (z)
  = b_{j1}^* (z) \, \left( z^{\left( W_\eps \right)_1} - 1 \right)
  + b_{j2}^* (z) \, \left( z^{\left( W_\eps \right)_2} - 1 \right).
  \]
  Written in matrix form, this is what has been claimed.
\end{proof}

\begin{definition}
  The matrix masks $B_\eps$, $\eps \in \{0,1\}$, from
  (\ref{eq:QIdRep}) are called
  \emph{representation masks} of $a_\eps$, $\eps \in \{0,1\}$,
  respectively.
\end{definition}

\begin{remark}
  Recall that the computation of the representation masks $B_\eps$
  can be performed by \emph{reduction}, a multivariate generalization of
  division with remainder, see \cite{CoxLittleOShea92,Sauer01}
  for the term order and homogeneous versions of this process, respectively.
  Therefore, the symbolic determination of $B_\eps$ can easily be done
  with the help of practically any Computer Algebra system that supports
  constructive polynomial ideal theory.

  Note however, that the representation masks are \emph{not} unique
  to the appearance of \emph{syzygies} of $\left[ z^{W_\eps} - 1 \right]$,
  not even if an H--representation, cf. \cite{MoellerSauer00}, is
  chosen where -- in the case of $W_0$ -- we have the ``minimal degree''
  requirements that
  \[
  \deg b_{11} = \deg b_{21} = \deg a_0 - 3, \quad
  \deg b_{12} = \deg b_{22} = \deg a_0 - 1,
  \]
  see also \cite{Sauer02}.
\end{remark}

\noindent We continue by giving explicit bases of the quotient
ideals for our specific choice of $W_\eps$. This is easy for $W_0$
as all entries in this matrix are nonnegative, and indeed it is not
difficult to see that
\begin{eqnarray*}
  I_0 & := & \left\langle z^{W_0} - 1
    \right\rangle : \left\langle z-1 \right\rangle
    = \left\langle z_1^4 - 1, z_2^2 - 1 \right\rangle :
    \left\langle z-1 \right\rangle \\
    & = & \left\langle \left( z_1^3 + z_1^2 + z_1 + 1 \right) \left( z_2 + 1
      \right)\right\rangle + \left\langle z_1^4 - 1 \right\rangle
    + \left\langle z_2^2 - 1 \right\rangle.
\end{eqnarray*}
In fact, the graded homogeneous leading terms of the above ideal
basis are $z_1^3 z_2$, $z_1^4$ and $z_2^2$ so that the quotient
space is spanned exactly by the seven monomials
$$
1, z_1, z_2, z_1^2, z_1 z_2, z_1^3, z_1^2 z_2,
$$
and their number coincides with the number of joint zeros of $I_0$.
Hence, by the same reasoning as in \cite{MoellerSauer04,Sauer02}
they even form a graded Gr\"obner basis, hence an H--basis of the
ideal $I_0$. Recall that a subset $H$ of an ideal $I$ is called an
\emph{H--basis}, if any polynomial $f \in I$ can be written in the
form
\[
f = \sum_{h \in H} f_h \, h, \qquad \deg f \ge \deg f_h + \deg h,
\]
where $\deg$ denotes, as usual, the \emph{total degree} of a
polynomial. We will also use $\Pi_n$ for the vector space of all
polynomials of total degree at most $n$.

The situation for $I_1 = \left\langle z^{W_1} - 1 \right\rangle$
appears to be a little bit more intricate due to the appearance of a
negative entry in $W_1$. Here it is helpful to recall that $W_1 = U
W_0$, $U =
\begin{pmatrix}
  1 & -2 \\ 0  & 1
\end{pmatrix}$, to define $y = z^U = \left( z_1, z_1^{-2} z_2 \right)$,
hence also $z = y^{U^{-1}} = \left( y_1, y_1^2 y_2 \right)$ and to
realize that
\begin{eqnarray*}
  I_1 & = & \left\langle z^{W_1} - 1 \right\rangle : \left\langle z - 1
  \right\rangle
  = \left\langle z^{U W_0} - 1 \right\rangle : \left\langle z - 1
  \right\rangle
  = \left\langle y^{W_0} - 1 \right\rangle : \left\langle y^{U^{-1}} - 1
  \right\rangle
\end{eqnarray*}
Since
\begin{eqnarray*}
  \left\langle y^{U^{-1}} - 1 \right\rangle
  & = & \left\langle y_1 - 1, y_1^2 y_2 - 1 \right\rangle
  = \left\langle y_1 - 1, y_1^2 y_2 - \left( y_1 y_2 + y_2\right)
    \left( y_1 - 1 \right) - 1 \right\rangle \\
  & = & \left\langle y_1 - 1, y_2 - 1 \right\rangle
  = \left\langle y - 1 \right\rangle,
\end{eqnarray*}
we thus obtain that
\begin{eqnarray*}
  I_1 & = & \left\langle y^{W_0} - 1 \right\rangle : \left\langle y - 1
  \right\rangle\\
  & = & \left\langle \left( y_1^3 + y_1^2 + y_1 + 1 \right) \left( y_2 + 1
    \right)\right\rangle + \left\langle y_1^4 - 1 \right\rangle
  + \left\langle y_2^2 - 1 \right\rangle \\
  & = & \left\langle \left( z_1^3 + z_1^2 + z_1 + 1 \right) \left( z_2
      + z_1^2 \right)\right\rangle + \left\langle z_1^4 - 1 \right\rangle
  + \left\langle z_2^2 - z_1^4 \right\rangle
\end{eqnarray*}
To arrive at the somewhat surprising observation that in fact $I_1 =
I_0$, we add $z_1^4 - 1$ to the third basis element, $z_2^2 -
z_1^4$, yielding $z_2^2 - 1$ again, and subtract $\left( z_1 + 1
\right) \left( z_1^4 - 1 \right)$ from the first basis element which
leads to
\begin{eqnarray*}
  \lefteqn{
    \left( z_1^3 + z_1^2 + z_1 + 1 \right) \left( z_2 + z_1^2 \right)
    - \left( z_1 + 1 \right) \left( z_1^4 - 1 \right) } \\
  & = & \left( z_1^3 + z_1^2 + z_1 + 1 \right) z_2 +
  z_1^5 + z_1^4 + z_1^3 + z_1^2 - z_1^5 - z_1^4 + z_1 + 1 \\
  & = & \left( z_1^3 + z_1^2 + z_1 + 1 \right) \left( z_2 + 1
  \right)
\end{eqnarray*}
and therefore to the following result.

\begin{theorem}\label{T:QuotIdBasis}
  The two quotient ideals $I_\eps = \left\langle z^{W_\eps} - 1
  \right\rangle : \langle z-1 \rangle$, $\eps \in \{0,1\}$, coincide and have the H--basis
  representation
  \begin{equation}
  \label{eq:QuotIdBasis}
    I := I_0 = I_1
    = \left\langle \left( z_1^3 + z_1^2 + z_1 + 1 \right) \left( z_2 + 1
      \right)\right\rangle + \left\langle z_1^4 - 1 \right\rangle
    + \left\langle z_2^2 - 1 \right\rangle.
  \end{equation}
\end{theorem}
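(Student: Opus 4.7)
The plan is to handle the two quotient ideals separately: first establish the H--basis representation for $I_0$ directly, then reduce the computation of $I_1$ to $I_0$ via the factorization $W_1 = U W_0$ from \eqref{eq:UVProps}, and finally observe a small algebraic identity that collapses the resulting ideal back onto $I_0$.

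For $I_0$, since $W_0$ is diagonal with entries $4$ and $2$, the ideal $\langle z^{W_0} - 1 \rangle$ is simply $\langle z_1^4 - 1, z_2^2 - 1 \rangle$. The inclusion $\supseteq$ in \eqref{eq:QuotIdBasis} is immediate by multiplying each of the three proposed generators by a suitable coordinate difference $z_j - 1$ and recognising the product as a combination of $z_1^4 - 1$ and $z_2^2 - 1$. For the reverse inclusion together with the H--basis property, I would inspect the total--degree leading terms $z_1^3 z_2$, $z_1^4$, $z_2^2$, whose associated staircase complement is spanned by exactly the seven monomials $1, z_1, z_2, z_1^2, z_1 z_2, z_1^3, z_1^2 z_2$. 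A dimension count against the number of joint zeros of the quotient ideal, which by Theorem~\ref{T:QuotId} equals $|H_0 \setminus \{0\}| = 7$, together with the standard Gr\"obner--style argument from \cite{MoellerSauer04,Sauer02}, confirms that the three generators indeed form an H--basis of $I_0$.

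For $I_1$ the key move is the substitution $y = z^U$, motivated by $W_1 = U W_0$. Since $U$ is unimodular this substitution is a $\Lambda$--automorphism; it carries $\langle z^{W_1} - 1 \rangle$ to $\langle y^{W_0} - 1 \rangle$ and $\langle z - 1 \rangle$ to $\langle y^{U^{-1}} - 1 \rangle = \langle y_1 - 1, y_1^2 y_2 - 1 \rangle$. A one--line reduction (subtract $(y_1 y_2 + y_2)(y_1 - 1)$ from the second generator) shows that this last ideal equals $\langle y_1 - 1, y_2 - 1 \rangle = \langle y - 1 \rangle$, so $I_1$ expressed in the $y$--coordinates is literally the same ideal as $I_0$ in the $z$--coordinates. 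Transporting the H--basis of $I_0$ back via $y_1 = z_1$, $y_2 = z_1^{-2} z_2$ then yields the three generators $(z_1^3 + z_1^2 + z_1 + 1)(z_2 + z_1^2)$, $z_1^4 - 1$, and $z_2^2 - z_1^4$ for $I_1$.

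The final and only non--mechanical step is to check that this transported ideal coincides with $I_0$ as a subset of $\Lambda$, not merely as an isomorphic copy under the shearing automorphism. I would do this by two direct manipulations inside the ideal: adding $z_1^4 - 1$ to $z_2^2 - z_1^4$ yields $z_2^2 - 1$, and subtracting $(z_1 + 1)(z_1^4 - 1)$ from $(z_1^3 + z_1^2 + z_1 + 1)(z_2 + z_1^2)$ collapses, after cancellation of the $z_1^5$ and $z_1^4$ terms, to $(z_1^3 + z_1^2 + z_1 + 1)(z_2 + 1)$. Both manipulations are reversible modulo the remaining generators, so the two ideals coincide, which is exactly the assertion $I_0 = I_1$ together with the H--basis representation \eqref{eq:QuotIdBasis}. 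This collapse is where the proof becomes a genuine computation rather than bookkeeping, but once it is performed the theorem follows.
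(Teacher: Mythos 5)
Your proposal follows the paper's own argument essentially step for step: direct computation of $I_0$ with the H--basis property verified via the leading terms $z_1^3 z_2, z_1^4, z_2^2$ and the count of seven joint zeros, the unimodular substitution $y = z^U$ with the reduction $\left\langle y_1 - 1, y_1^2 y_2 - 1 \right\rangle = \left\langle y - 1 \right\rangle$ to express $I_1$, and finally the same two elementary generator manipulations (adding $z_1^4 - 1$ to $z_2^2 - z_1^4$ and subtracting $\left( z_1 + 1 \right)\left( z_1^4 - 1 \right)$ from $\left( z_1^3 + z_1^2 + z_1 + 1 \right)\left( z_2 + z_1^2 \right)$) to collapse $I_1$ onto $I_0$. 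The argument is correct and matches the paper's proof, so there is nothing further to add.
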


\noindent The fact that $I_0 = I_1$ may appear a little bit
surprising at first view, since it implies that, for any finitely
supported mask $a$, we have
\[
\sum_{\beta \in \ZZ^2} a \left( \alpha + W_0 \beta \right) = 1,
\quad \alpha \in \ZZ^2 \qquad \Leftrightarrow \qquad \sum_{\beta \in
\ZZ^2} a \left( \alpha + W_1 \beta \right) = 1, \quad \alpha \in
\ZZ^2.
\]
Hence the necessary ``sum rule'' condition with respect to $W_0$ is
equivalent to the one with respect to $W_1$. However, if we write
$W_1 = W_0 V$ with the unimodular matrix $V =
\begin{pmatrix}
  1 & -1 \\ 0 & 1
\end{pmatrix}$, then a simple change of the summation variable indeed
gives for any $\alpha \in \ZZ^2$
\[
\sum_{\beta \in \ZZ^2} a \left( \alpha + W_1 \beta \right) =
\sum_{\beta \in \ZZ^2} a \left( \alpha + W_0 V \beta \right)
= \sum_{\beta \in \ZZ^2} a \left( \alpha + W_0 \beta \right),
\]
and confirms (\ref{eq:QuotIdBasis}).

Moreover, note that Theorem~\ref{T:QuotIdBasis} gives a way to
\emph{parameterize} the ideal of all admissible polynomial masks.
Indeed, for any $n \in \NN$ we have that
\begin{eqnarray*}
  I \cap \Pi_d & = & p (z) \, \left( z_1^4-1 \right) + q (z) \,
  \left( z_1^3 + z_1^2 + z + 1 \right) \left( z_2 + 1 \right)
  + r(z) \, \left( z_2^2 - 1 \right), \\
  & & \qquad \mbox{with }\deg p \le n-4, \, \deg q \le n-4, \mbox{ and }\deg r \le n-2.
\end{eqnarray*}
For a polynomial of this form, the decomposition with respect to
$W_0$, i.e., the matrix polynomial $B_0$, becomes
\begin{equation}
  \label{eq:B0Compute}
  B_0^* (z) =
  \begin{pmatrix}
    \left( z_1 - 1 \right) \, p(z) + \left( z_2 + 1 \right) \, q(z)
    & \left( z_1 - 1 \right) \, r(z) \\
    \left( z_2 - 1 \right) \, p(z) &  \left( z_1^3 + z_1^2 + z_1 + 1 \right)
    \, q(z) + \left( z_2 - 1 \right) \, r(z).
  \end{pmatrix}
\end{equation}
Since the two Laurent ideals $I_0$ and $I_1$ coincide, the
decomposition of $a_1^*$ into $B_1^*$ takes exactly the same form as
$B_0^*$ in (\ref{eq:B0Compute}).

Next, we rephrase the identity (\ref{eq:QIdRep}) by means of the
\emph{backwards difference operator} $\nabla$, defined for a
sequence $a$ as
\[
\nabla a :=
\begin{pmatrix}
  a \left( \cdot - \eta_1 \right) - a (\cdot) \\ a \left( \cdot -
    \eta_2 \right) - a (\cdot)
\end{pmatrix}, \qquad
\left( \nabla a \right)^* (z) = \left[ z - 1 \right] \, a^* (z),
\]
where $\eta_1 =
\begin{pmatrix}
  1 \\ 0
\end{pmatrix}
$ and $\eta_2 =
\begin{pmatrix}
  0 \\ 1
\end{pmatrix}
$ denote the unit multiindices in $\Z^2$. Since, in addition, any
finitely supported matrix sequence $B$ satisfies
\[
\left( S_{B,W_\eps} c \right)^* (z) = B^* (z) \, c \left( z^{W_\eps}
\right), \quad c \in \nabla \ell_\infty(\ZZ^2), \; \eps \in \{0,1\},
\]
where
\[ S_{B,W_\eps} c := \sum_{\alpha \in \Z^2} B \left( \cdot - W_\eps \, \alpha
\right) \, c \left( \alpha \right),
\]
our quotient ideal representation (\ref{eq:QIdRep}) can equivalently
be written in terms of the difference operator as
\begin{equation*}
  \nabla S_{a_\eps,W_\eps} = S_{B_\eps,W_\eps} \nabla, \qquad \eps \in \{0,1\}.
\end{equation*}

We end this section by recalling that quotient ideal containment
also characterizes the order of \emph{polynomial reproduction}
provided by the two masks and thus the subdivision scheme. Recall
that a mask $a$ provides polynomial reproduction of order $n$, if
the leading forms of all polynomial sequences are reproduced by the
scheme:
\[
S_a \Pi_k = \Pi_k, \quad k = 0,\dots,n, \qquad \Pi_k := \left\{
\sum_{|\gamma| \le k}
  a_\gamma \, \alpha^\gamma \;:\; \alpha \in \ZZ^2 \right\}.
\]
Polynomial reproduction is essential for the smoothness of the
refinable limit function \cite{CavarettaDahmenMicchelli91} as well
as for the approximation order of the associated wavelet
construction. With the methods from \cite{MoellerSauer04,Sauer02} we
can now easily describe polynomial reproduction.

\begin{theorem}
  The directional subdivision scheme preserves polynomials of degree $n$,
  i.e., $S_\eps \Pi_k = \Pi_k$, $\eps \in E$, $k=0,\dots,n$, if and only if
  \[
  a_\eps \in I^{n+1} = \left( \left\langle z^{W_0} - 1 \right\rangle :
    \left\langle z - 1 \right\rangle \right)^{n+1}
  = \left\langle z^{W_0} - 1 \right\rangle^{n+1} : \left\langle z - 1
  \right\rangle^{n+1}.
  \]
\end{theorem}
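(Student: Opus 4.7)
The plan is to reduce the statement to the single-scheme case and then invoke the classical ideal-theoretic characterization of polynomial reproduction for stationary subdivision schemes (see, e.g., \cite{MoellerSauer04,Sauer02}), which asserts that $S_{a,W}$ reproduces polynomials of degree $\le n$ if and only if $a^* \in \left(\langle z^W - 1\rangle : \langle z - 1 \rangle\right)^{n+1}$. The crucial algebraic input here is Theorem~\ref{T:QuotIdBasis}, which gives $I_0 = I_1 = I$, so that the characterization takes the same form for $W_0$ and $W_1$ and condenses into the single condition $a_\eps^* \in I^{n+1}$ for $\eps \in \{0,1\}$.

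For the direction "$a_\eps \in I^{n+1}$ implies polynomial preservation", I would apply the single-scheme theorem to each of $S_0$ and $S_1$ to conclude that the two elementary operators preserve $\Pi_k$ for $k=0,\ldots,n$. Since any $S_\eps$ with $\eps \in E_n$ is a finite composition $S_{\eps_n} \cdots S_{\eps_1}$ of operators each mapping $\Pi_k$ into itself, $S_\eps \Pi_k = \Pi_k$ follows immediately, using that polynomial reproduction is in particular a surjection on $\Pi_k$. The converse is direct: the hypothesis $S_\eps \Pi_k = \Pi_k$ for all $\eps \in E$ includes $\eps = (0)$ and $\eps = (1)$, which, by the single-scheme theorem together with Theorem~\ref{T:QuotIdBasis}, forces $a_0^*, a_1^* \in I^{n+1}$.

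What remains is to verify the secondary identity $\left(\langle z^{W_0}-1\rangle : \langle z-1\rangle\right)^{n+1} = \langle z^{W_0}-1\rangle^{n+1} : \langle z-1\rangle^{n+1}$. The inclusion ``$\subseteq$'' follows by multilinearity: if $f_j \cdot \langle z - 1 \rangle \subseteq \langle z^{W_0} - 1 \rangle$ for $j=1,\ldots,n+1$, then $\prod_j f_j \cdot \langle z-1\rangle^{n+1}$ lies in $\langle z^{W_0} - 1\rangle^{n+1}$. The reverse inclusion is the main obstacle; it relies on the fact that both $\langle z^{W_0}-1\rangle$ and $\langle z-1\rangle$ are generated by regular sequences in the Laurent polynomial ring $\Lambda$, so that the relevant local rings are Cohen--Macaulay and powers of the colon ideal distribute compatibly with powers of the denominator. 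I would complete this step by invoking the H--basis and colon-ideal machinery developed in \cite{MoellerSauer04}, where the explicit graded structure from \eqref{eq:QuotIdBasis} yields a clean recursion on $n$.

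The conceptually transparent part of the argument is therefore the reduction to elementary operators via the composition structure $S_\eps = S_{\eps_n} \cdots S_{\eps_1}$, combined with the remarkable coincidence $I_0 = I_1$ from Theorem~\ref{T:QuotIdBasis}; the entire technical burden is absorbed into the single-scheme polynomial reproduction theorem and the power-of-quotient-ideal identity, both of which are standard within the algebraic framework already built up in Section~\ref{sec:convergence}.
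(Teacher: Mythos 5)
The paper offers no proof of this theorem at all: it is stated as an immediate consequence of the methods of \cite{MoellerSauer04,Sauer02}, right after the observation $I_0=I_1$. Your outline is exactly the intended argument and is sound where it is concrete: apply the single-scheme quotient-ideal characterization to $S_0$ (with $W_0$) and $S_1$ (with $W_1$), use Theorem~\ref{T:QuotIdBasis} to merge the two membership conditions into the single condition $a_\eps^*\in I^{n+1}$, and pass to arbitrary $\eps\in E$ by composing operators each of which maps $\Pi_k$ onto $\Pi_k$; the converse by specializing to $\eps=(0)$ and $\eps=(1)$ is likewise fine.

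The one genuine soft spot is your justification of the reverse inclusion in $\left(\left\langle z^{W_0}-1\right\rangle:\left\langle z-1\right\rangle\right)^{n+1}=\left\langle z^{W_0}-1\right\rangle^{n+1}:\left\langle z-1\right\rangle^{n+1}$. The principle you invoke (both ideals generated by regular sequences, hence Cohen--Macaulay local rings, hence powers of the colon distribute) is not a valid general principle: already for $I=\left\langle x^3,y^3\right\rangle$ and $J=\left\langle x,y\right\rangle$ in $\C[x,y]$, both generated by regular sequences in a regular ring, one has $x^2y^4\in I^2:J^2$ but $x^2y^4\notin\left(I:J\right)^2$. What makes the identity true here is not Cohen--Macaulayness but the special geometry of $\left\langle z^{W_0}-1\right\rangle$: it is a radical zero-dimensional complete intersection with simple zeros (the Jacobian of $z_1^4-1,\,z_2^2-1$ is nonsingular at each common zero), and the single zero $(1,1)$ of $\left\langle z-1\right\rangle$ is one of them. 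Localizing at each maximal ideal, both sides of the asserted identity become the unit ideal away from the zeros of $\left\langle z^{W_0}-1\right\rangle$, the $(n+1)$st power of the maximal ideal at each of the seven zeros different from $(1,1)$, and the unit ideal at $(1,1)$; since equality of ideals can be checked locally at maximal ideals, the identity follows. Either carry out this localization argument or delegate the identity explicitly to the quotient-ideal calculus of \cite{MoellerSauer04}; with that step repaired, your proof is complete and consistent with what the paper intends.
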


\subsection{A Characterization of Convergence}
Finally, we will give a characterization of convergence of the
adaptive directional subdivision scheme, like usually in terms of a
(restricted joint) spectral radius. In this subsection, the adaptive
directional subdivision scheme both for masks $a_0$ and $a_1$ as well
as for their associated matrix sequences $B_0$ and $B_1$ will come
into play. To distinguish both, for the first, we again employ the
notation $S_\eps$, $\eps \in E$, whereas the second adaptive directional
subdivision scheme will be denoted by $S^B_{\eps}$, $\eps \in E$.

Now, given two matrix masks
$B_\eps$, $\eps \in \{0,1\}$, their \emph{restricted joint spectral
radius} is defined as
\begin{equation*}
  \rho \left( B_0, B_1 \,|\, \nabla \right)
  = \limsup_{n \to \infty} \sup_{\eps \in \{0,1\}^n} \sup_{c \in
    \nabla \ell_\infty} \left\| S^B_{\eps} c \right\|_\infty^{1/n}.
\end{equation*}
The joint spectral radius is called ``restricted'' since the
supremum is not taken over all $2$--vector valued sequences but only
over the proper subset $\nabla \ell_\infty$, see
\cite{CharinaContiSauer04,Sauer06b}. The main result of this
paragraph is now as follows.

\begin{theorem}\label{T:Convergence}
  The adaptive directional subdivision scheme based on the masks $a_\eps$, $\eps \in
  \{0,1\}$ converges if and only if $a_\eps^* (z) \in I$ and the
  representation masks $B_\eps$, $\eps \in \{0,1\}$, satisfy
  $\rho \left( B_0, B_1 \,|\, \nabla \right) < 1$.
\end{theorem}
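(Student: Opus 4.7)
The plan is to split the equivalence into necessity and sufficiency, with the intertwining relation
$\nabla S_{a_\eps,W_\eps} = S_{B_\eps,W_\eps} \nabla$, $\eps \in \{0,1\}$, as the technical bridge between the mask scheme and the difference scheme. Iterating this identity along the binary tree yields
$$ \nabla S_\eps \;=\; S^B_\eps \, \nabla, \qquad \eps \in E_n, $$
so contractivity of the difference scheme is directly coupled to the decay of forward differences of $S_\eps \delta$. Throughout, all data belong to $\nabla \ell_\infty(\Z^2)$, which is exactly the space restricted by the joint spectral radius.

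For \emph{necessity}, suppose the adaptive directional subdivision scheme converges. Choosing the constant branches $\eps = (0,0,\dots)$ and $\eps = (1,1,\dots)$, Lemma \ref{lemma:subdivisionconvergent} gives the sum rule for $a_0$ and $a_1$, so by Theorem \ref{T:QuotId} each $a_\eps^*$ lies in $I$, and the representation masks $B_\eps$ of Corollary \ref{C:QIdRep} exist. To produce $\rho(B_0,B_1\mid\nabla)<1$, observe that the convergence condition in Definition \ref{def:convergence} together with uniform continuity of $f_\eps$ and $\|W_{P_n\eps}^{-1}\|\to 0$ forces
$$ \|\nabla S_{P_n\eps}\delta\|_\infty \;=\; \|S^B_{P_n\eps}\nabla\delta\|_\infty \;\longrightarrow\; 0, $$
and a compactness/uniformity argument across branches (needed because a convergent scheme produces a limit for every $\eps\in E_\infty$ simultaneously) upgrades this to a geometric rate $\le C q^n$ with $q<1$. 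Extending from the single test sequence $\nabla\delta$ to arbitrary $c\in \nabla\ell_\infty$ by linearity and translation invariance of $S^B_{P_n\eps}$ then yields $\rho(B_0,B_1\mid\nabla)\le q<1$.

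For \emph{sufficiency}, assume $a_\eps^*\in I$ and $\rho(B_0,B_1\mid\nabla)<1$; fix any $q\in(\rho,1)$ and a compactly supported refinable test function $g$ (for instance, a tensor--product B--spline). By Theorem \ref{T:AlterConvDesc} it suffices to show that
$$ F_n^\eps \;:=\; \bigl( g * S_{P_n\eps}\delta \bigr)\bigl(W_{P_n\eps}\,\cdot\bigr) $$
forms a uniform Cauchy sequence for every $\eps\in E_\infty$. A standard stability estimate for test functions, combined with $S_{P_{n+1}\eps}\delta = S_{\eps_{n+1}} S_{P_n\eps}\delta$ and the refinability of $g$, reduces $\|F_{n+1}^\eps-F_n^\eps\|_\infty$ to a multiple of $\|\nabla S_{P_n\eps}\delta\|_\infty = \|S^B_{P_n\eps}\nabla\delta\|_\infty \le C q^n$. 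Summing the geometric series gives a uniformly continuous limit $f_\eps$; the sum rule of order $0$ for all $a_{P_n\eps}$ (which holds because $I_0=I_1=I$) guarantees that the constant sequence is preserved at every step, so $f_\eps\neq 0$. This verifies Definition \ref{def:convergence}.

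The main obstacle is \emph{not} the algebraic identification of the ideal $I$, which is already settled by Theorem \ref{T:QuotIdBasis}, nor the intertwining of $\nabla$ with the subdivision operator; the real difficulty is \emph{uniformity over the continuum of branches} $E_\infty$. Classical stationary theory gives decay of $\|S^B_\eps c\|_\infty$ along a single sequence $\eps$, but Definition \ref{def:convergence} demands simultaneous geometric contraction along every infinite path of the binary tree, and a convergent limit with a modulus of continuity that does not degenerate as the branch is varied. It is precisely the restricted joint spectral radius $\rho(B_0,B_1\mid\nabla)$ that packages this simultaneous control into a single scalar, and showing that one condition on this scalar characterizes the behaviour in both directions of the equivalence is the crux of the proof.
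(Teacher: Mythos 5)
Your overall architecture (split into necessity and sufficiency, use the intertwining $\nabla S_{P_n\eps} = S^B_{P_n\eps}\nabla$, telescope a refinable test function) matches the paper, but there is a genuine gap in your necessity argument. From convergence you only extract $\bigl\| S^B_{P_n\eps}\nabla\delta \bigr\|_\infty \to 0$ and then claim to pass to arbitrary $c$ ``by linearity and translation invariance.'' This step fails: writing $\nabla S_{P_n\eps} c$ as an overlapping sum of translates of $\nabla S_{P_n\eps}\delta$ yields at best a bound proportional to $\|c\|_\infty$, whereas the restricted joint spectral radius requires an estimate of the form $\| S^B_{P_n\eps}\nabla c\|_\infty \le C\,\theta_n\,\|\nabla c\|_\infty$ with $\theta_n \to 0$; since $\|c\|_\infty/\|\nabla c\|_\infty$ is unbounded on $\ell_\infty$ (e.g.\ $c(\alpha)=\alpha_1$ has bounded differences but is unbounded), no bound in $\|c\|_\infty$ controls the restricted operator norm. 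The paper explicitly flags that pointwise decay $\|S^B_{P_n\eps}\nabla c\|_\infty \to 0$ ``is not sufficient'' and supplies the missing ingredient in Lemmas~\ref{L:Conv=>SR1} and~\ref{L:Conv=>SR2}: because the limit function and the scheme reproduce constants, one may subtract a judiciously chosen constant $w$ from $c$ on the finite support window and bound $|c(\alpha)-w|$ by a multiple of $\|\nabla c\|_\infty$, which is exactly what converts the convergence hypothesis into the $\|\nabla c\|_\infty$--normalized estimate. Your ``compactness/uniformity across branches'' remark does not address this; once the $\|\nabla c\|_\infty$--bound is in hand, a single level $n$ with restricted norm below one plus submultiplicativity already gives $\rho<1$, so the real crux is the norm in which the estimate is measured, not the geometric upgrade.

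There is also a smaller but real omission in your sufficiency sketch: the test function $g$ can be chosen $W_0$--refinable, but it cannot simultaneously be $W_1$--refinable, so the telescoping estimate for steps with $\eps_n = 1$ does not follow from ``refinability of $g$'' alone. The paper handles this by introducing the companion function $g_1 = g(U\cdot)$, which is $W_1$--refinable (Lemma~\ref{lemma:relationrefinability}), and by absorbing the discrepancy through the factorization $(g-g_1)*c = G*\nabla c$ of Lemma~\ref{L:NullFactFunc}, which again produces the needed factor $\|S^B_{P_{n}\eps}\nabla c\|_\infty \le C\sigma^{n}$. Without this device (or an equivalent one) the Cauchy estimate for branches containing infinitely many $1$'s is not justified. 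Your identification of the algebraic part ($a_\eps^*\in I$ via the sum rule and Theorem~\ref{T:QuotId}, existence of $B_\eps$ via Corollary~\ref{C:QIdRep}) is correct and agrees with the paper.
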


\noindent We will split the lengthy proof of
Theorem~\ref{T:Convergence} into several partial results, beginning
with the sufficiency of the spectral radius condition. To that end,
we will show that, starting with a particular test function $g$, the
sequence $g * S_{P_n \eps} c$ converges to a limit function for any
choice of $\eps \in E_\infty$ and any $c \in \ell_\infty$. Indeed,
we choose the test function $g$ to be $W_0$--refinable with respect to a
mask $b$, that is
\begin{equation}\label{eq:choicetest}
g = \sum_{\alpha \in \Z^2} b(\alpha) \, g \left( W_0 \cdot - \alpha
\right).
\end{equation}
Such functions can be easily shown to exist, even with an arbitrary
order of smoothness: pick any cardinal B--spline $\phi = M \left(
\cdot \,|\, 0,\dots,N \right)$ with refinement mask $h$, then a
double application of the refinement equation with respect to the
first variable shows that the tensor product function
\[
g(x,y) = \left[ \left( \phi * \phi \right) \otimes \phi \right]
(x,y) = \left( \phi * \phi \right) (x) \, \phi(y) =: \psi(x) \,
\phi(y)
\]
is $W_0$--refinable with respect to the mask $b = S_{h,2} h \otimes h$,
where $S_{h,2}$ denotes the subdivision scheme with mask $h$ and dilation 2.
The following lemma states a more general process.

\begin{lemma}
\label{lemma:maskgeneration1}
Let $b_1, b_2 \in \ell(\ZZ)$ be $2$-refinable masks, and let the mask $\tilde{b}_1$ be defined by
$\tilde{b}_1(m) =  S_{b_1,2} b_1(m)= \sum_{k \in \ZZ} b_1(k) b_1(m-2k)$. Then the mask
$a_0=\tilde{b}_1 \otimes b_2$ is $W_0$-refinable, and $a_1 = a_0(U
\cdot)$ is $W_1$-refinable.
\end{lemma}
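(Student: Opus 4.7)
The plan is to construct explicit refinable functions realizing each of the two masks, by iterating the univariate refinement equations and then applying a shear.

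For the first claim, let $\phi_i$ denote the $2$-refinable function associated with $b_i$, $i=1,2$. Substituting the refinement equation for $\phi_1$ into itself once and relabeling the summation by $m=2k+j$ gives
\[
\phi_1 = \sum_{k,j \in \ZZ} b_1(k)\, b_1(j)\, \phi_1(4 \cdot - 2k - j) = \sum_{m \in \ZZ} \tilde b_1(m)\, \phi_1(4 \cdot - m),
\]
so $\phi_1$ is $4$-refinable with mask $\tilde b_1 = S_{b_1,2} b_1$. Since $W_0 = \mathrm{diag}(4,2)$ is diagonal, the tensor product $g := \phi_1 \otimes \phi_2$ immediately satisfies $g = \sum_{\alpha \in \ZZ^2} a_0(\alpha)\, g(W_0 \cdot - \alpha)$ with $a_0 = \tilde b_1 \otimes b_2$, which is the asserted $W_0$-refinability.

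For the second claim, the natural candidate refinable function for $a_1 = a_0(U\cdot)$ is the sheared copy $\tilde g(x) := g(Ux)$, so that $g = \tilde g(U^{-1}\cdot)$. The crucial ingredient is the matrix identity
\[
U^{-1} W_0 U = W_1,
\]
a direct $2 \times 2$ check (equivalently $W_0 U W_0^{-1} = U^2$, reflecting the parabolic interaction between anisotropic scaling and shear; it is also consistent with the paper's relations $W_1 = U W_0 = W_0 V$ together with $V^2 = U$). Substituting $Ux$ into the refinement equation for $g$, rewriting $g(\,\cdot\,) = \tilde g(U^{-1}\cdot)$, and reindexing the sum by the unimodular substitution $\alpha = U\beta$ yields
\[
\tilde g(x) = \sum_{\alpha \in \ZZ^2} a_0(\alpha)\, \tilde g(U^{-1} W_0 U x - U^{-1}\alpha) = \sum_{\beta \in \ZZ^2} a_1(\beta)\, \tilde g(W_1 x - \beta),
\]
which is the $W_1$-refinability of $a_1$.

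I do not anticipate any real obstacle: the only nonroutine step is locating the correct change of variables $\tilde g = g(U\cdot)$, which is forced by the coupled requirements $W_1 = A^{-1} W_0 A$ (to match scaling matrices) and $a_0 \circ A = a_1$ (to match masks); both are satisfied by $A=U$. Everything else is summation bookkeeping with the refinement equation. The result also dovetails with the geometric picture in the preceding subsection, where $S_1$-refinement is interpreted as a sheared $S_0$-refinement through the dilation operator $D_U$.
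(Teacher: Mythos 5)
Your proposal is correct and follows essentially the same route as the paper: the first part is the paper's tensor-product computation (double application of the refinement equation of $\varphi_1$, single application for $\varphi_2$), and the second part reproves the paper's Lemma \ref{lemma:relationrefinability} via the conjugation identity $U^{-1}W_0U = W_1$, which is exactly the algebra ($W_1 = UW_0 = W_0V$, $U = V^2$) used there.
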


\begin{proof}
Let $\varphi_1, \varphi_2$ be univariate functions which are
$2$-refinable with respect to $b_1, b_2$, respectively, i.e.,
\[\varphi_i = \sum_{k \in \Z} b_i(k) \varphi(2 \cdot - k), \quad i=1, 2.\]
We claim that the function $f$ defined by
\[ f = \varphi_1 \otimes \varphi_2\]
is $W_0$-refinable with respect to $a_0$. Indeed, for $x = (x_1,x_2)
\in \RR^2$, we obtain
{\allowdisplaybreaks
\begin{eqnarray*}
\lefteqn{\sum_{\alpha \in \Z^2} (\tilde{b}_1 \otimes b_2)(\alpha) f(W_0 x - \alpha)}\\
& = & \left[\sum_{\alpha_1 \in \Z} \left(\sum_{k \in \ZZ}
b_1(\alpha_1-2k)b_1(k)\right)
\varphi_1(4 x_1 - \alpha_1)\right] \left[\sum_{\alpha_2 \in \Z} b_2(\alpha_2) \varphi_2(2 x_2 - \alpha_2)\right]\\
& = &  \left[\sum_{k \in \Z} b_1(k) \sum_{\alpha_1 \in \ZZ}
b_1(\alpha_1)
\varphi_1(4 x_1 - 2 k -\alpha_1)\right] \varphi_2(x_2)\\
& = &  \left[\sum_{k \in \Z} b_1(k) \varphi_1(2 x_1 - k)\right] \varphi_2(x_2)\\
& = &  f(x).
\end{eqnarray*}
}
The claim concerning $W_1$-refinability of $a_1$ follows from Lemma
\ref{lemma:relationrefinability}.
\end{proof}

There also exists a canonical $W_1$--refinable function associated
to $g$.

\begin{lemma}\label{lemma:relationrefinability}
  If $g_0 = g$ is $W_0$--refinable with respect to the mask $b_0 = b$, then
  $g_1 = g_0 \left( U \cdot \right)$ is $W_1$--refinable with respect to the mask
  $b_1 = b_0 \left( U \cdot \right)$.
\end{lemma}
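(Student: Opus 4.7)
The plan is to compute the right--hand side of the desired refinement equation for $g_1$ directly, substitute the definitions $g_1 = g_0(U \cdot)$ and $b_1 = b_0(U \cdot)$, and reduce everything to the $W_0$--refinement equation of $g_0$ by means of a change of summation variable. The only non--trivial ingredient will be a commutation identity between the shear $U$ and the two dilation matrices.

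The key identity I will need is
\[
W_0 \, U = U \, W_1,
\]
or equivalently $W_1 = U^{-1} W_0 U$. This can either be checked by a one--line $2 \times 2$ matrix multiplication, or derived from the relations already stated in the paper: from $U = V^2$ and $W_0 V W_0^{-1} = V^2 = U$ (which is immediate from (\ref{eq:UVProps}) and the fact that $W_1 = U W_0 = W_0 V$), one gets $W_0 U W_0^{-1} = V^4 = U^2$, hence $W_0 U = U^2 W_0 = U (U W_0) = U W_1$. I will state this as a short preliminary observation.

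With this at hand, I start from the $W_0$--refinement equation $g_0(y) = \sum_\alpha b_0(\alpha) g_0(W_0 y - \alpha)$, set $y = U x$, and compute
\begin{align*}
\sum_{\alpha \in \Z^2} b_1(\alpha) \, g_1(W_1 x - \alpha)
&= \sum_{\alpha \in \Z^2} b_0(U \alpha) \, g_0\bigl(U (W_1 x - \alpha)\bigr) \\
&= \sum_{\alpha \in \Z^2} b_0(U \alpha) \, g_0(U W_1 x - U \alpha) \\
&= \sum_{\beta \in \Z^2} b_0(\beta) \, g_0(W_0 U x - \beta) \\
&= g_0(U x) = g_1(x),
\end{align*}
where the third equality uses the substitution $\beta = U \alpha$, which is a bijection on $\Z^2$ because $U$ is unimodular, and the commutation identity $U W_1 = W_0 U$; the fourth equality is the refinement equation of $g_0$ evaluated at $y = U x$.

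The only potential obstacle is a bookkeeping one: making sure that $b_1 = b_0(U \cdot)$ is indeed a well--defined finitely supported sequence on $\Z^2$ (which follows at once from $U \in \mathrm{GL}_2(\Z)$), and checking that the pointwise identity derived above is legitimate (absolute convergence is automatic since $b_0$, and hence $b_1$, is finitely supported, while $g_0$ is continuous and compactly supported, so only finitely many terms are nonzero for each $x$). No spectral or analytic ingredients enter; the whole argument is an algebraic manipulation powered by the commutation $W_0 U = U W_1$.
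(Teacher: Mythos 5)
Your proof is correct and follows essentially the same route as the paper's: a direct computation combining the substitution $\beta = U\alpha$ (valid since $U$ is unimodular) with the commutation relation between the shear and the dilations, which in your formulation reads $W_0 U = U W_1$ and in the paper appears through the equivalent identities $W_1 = U W_0 = W_0 V$ and $W_1 = U^{-1} W_1 V$. The only cosmetic difference is the direction of the computation (you start from the candidate refinement sum, the paper starts from $g_1(x) = g_0(Ux)$), so no further comment is needed.
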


\begin{proof}
  Setting $g_1 = g_0 \left( U \cdot \right)$ and thus $g_0 = g_1 \left(
    U^{-1} \cdot \right)$, we find for $x \in \R^2$ that
  \begin{eqnarray*}
    g_1 (x) & = & g_0 (Ux)
    = \sum_{\alpha \in \Z^2} b_0 (\alpha) \, g_0 \left( W_0 U x - \alpha
    \right)
    = \sum_{\alpha \in \Z^2} b_0 (\alpha) \, g_0 \left( W_0 V^2 x - \alpha
    \right) \\
    & = & \sum_{\alpha \in \Z^2} b_0 (\alpha) \, g_1 \left( U^{-1} W_1 V x -
      U^{-1} \alpha \right)
    = \sum_{\alpha \in \Z^2} b_0 \left( U \alpha \right) \,
    g_1 \left( W_1 x - \alpha \right) \\
    & = & \sum_{\alpha \in \Z^2} b_1 (\alpha) \, g_1 \left( W_1 x - \alpha
    \right),
  \end{eqnarray*}
  hence $g_1$ is $W_1$--refinable with respect to $b_1$.
\end{proof}

\noindent The next two observation are again of a more algebraic
nature.

\begin{lemma}\label{L:NullFactMask}
  Suppose that a mask $a$ satisfies $S_{a,W_\eps} c = 0$ for all
  \emph{constant} sequences $c$ and some $\eps \in \{0,1\}$. Then
  there exists a $1 \times 2$ matrix mask $B$ such that
  $S_{a,W_\eps} = S_{B,W_\eps} \nabla$.
\end{lemma}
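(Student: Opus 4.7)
The plan is to recognize this as an algebraic statement about the symbol $a^*$, prove the desired membership in the quotient ideal via a subsymbol decomposition, and then translate the ideal containment back into an operator identity involving $\nabla$.

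First I would translate the hypothesis. Applying $S_{a,W_\eps}$ to the constant sequence $c \equiv 1$ gives $S_{a,W_\eps} 1\,(\alpha) = \sum_{\beta \in \Z^2} a(\alpha - W_\eps \beta)$, so the assumption is equivalent to the vanishing of every coset sum, i.e.\ $\sum_{\beta \in \Z^2} a(\eta + W_\eps \beta) = 0$ for every $\eta \in H_\eps$. In terms of the subsymbols $a^*_{\eps,\eta}$ already introduced in the paper, this is simply the statement that $a^*_{\eps,\eta}(1,1) = 0$ for all $\eta \in H_\eps$. The target identity $S_{a,W_\eps} = S_{B,W_\eps} \nabla$, read on the symbol side, says $a^*(z) = B^*(z)\,[z^{W_\eps} - 1]$ for a row-vector symbol $B^*$, so the whole statement reduces to showing $a^* \in \langle z^{W_\eps} - 1\rangle$.

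Next I would exploit the standard fact that any Laurent polynomial $p(w)$ with $p(1,1)=0$ admits a decomposition $p(w) = (w_1-1)\,q_1(w) + (w_2-1)\,q_2(w)$, obtained by Taylor expansion around $(1,1)$ (equivalently, $\langle w-1\rangle$ is the vanishing ideal of that point). Applying this to each subsymbol $a^*_{\eps,\eta}$, which vanishes at $(1,1)$ by the previous step, yields Laurent polynomials $q_{1,\eta}, q_{2,\eta}$ with $a^*_{\eps,\eta}(w) = (w_1-1)q_{1,\eta}(w) + (w_2-1)q_{2,\eta}(w)$. Substituting $w = z^{W_\eps}$ into the subsymbol reconstruction formula $a^*(z) = \sum_{\eta \in H_\eps} z^\eta\, a^*_{\eps,\eta}(z^{W_\eps})$ then exhibits each summand as an element of $\langle z^{(W_\eps)_1} - 1,\, z^{(W_\eps)_2} - 1\rangle$. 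Collecting coefficients produces Laurent polynomials $B_1^*, B_2^*$ with $a^*(z) = B_1^*(z)\,(z^{(W_\eps)_1} - 1) + B_2^*(z)\,(z^{(W_\eps)_2} - 1)$, i.e.\ the row $B^*(z) = [B_1^*(z), B_2^*(z)]$ satisfies $a^*(z) = B^*(z)\,[z^{W_\eps} - 1]$. Since $a^*$ is a Laurent polynomial, $B^*$ can also be chosen to be a Laurent polynomial, so the corresponding matrix mask $B$ is finitely supported.

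Finally, I would translate this symbol identity back to the operator level. The matrix subdivision operator $S_{B,W_\eps}$ acts on a $2$-vector-valued sequence $d$ by $(S_{B,W_\eps} d)^*(z) = B^*(z)\,d^*(z^{W_\eps})$. Applying it to $d = \nabla c$, whose symbol is $[z-1]\,c^*(z)$, gives
\[
(S_{B,W_\eps}\nabla c)^*(z) = B^*(z)\,[z^{W_\eps} - 1]\,c^*(z^{W_\eps}) = a^*(z)\,c^*(z^{W_\eps}) = (S_{a,W_\eps} c)^*(z),
\]
so $S_{a,W_\eps} c = S_{B,W_\eps} \nabla c$ for every $c \in \ell_\infty(\Z^2)$, which is the claim.

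There is no real obstacle: the argument is bookkeeping once the correct algebraic picture is set up. The only point to be careful about is working in the Laurent polynomial ring (so that the vanishing ideal of $(1,1)$ is still $\langle w-1\rangle$ and divisions do not leave the ring), which guarantees that finite support of $a$ passes to finite support of $B$.
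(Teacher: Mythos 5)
Your proposal is correct and takes essentially the same route as the paper: the paper's proof simply cites \cite{MoellerSauer04,Sauer02} for the equivalence of annihilation of constants with $a^*(z) \in \left\langle z^{W_\eps} - 1 \right\rangle$ and then reads the resulting representation $a^*(z) = B^*(z)\,\left[ z^{W_\eps} - 1 \right]$ as the operator identity $S_{a,W_\eps} = S_{B,W_\eps}\nabla$, exactly as you do. The only difference is that you supply the short subsymbol/vanishing-ideal argument for the cited equivalence (coset sums vanish, so each subsymbol lies in $\left\langle w - 1 \right\rangle$, and the reconstruction formula then gives the membership), which is a correct filling-in of that reference rather than a different approach.
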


\begin{proof}
  Again we refer to \cite{MoellerSauer04,Sauer02} where it has been shown
  that $S_{a,W_\eps} c = 0$ for all constant sequences $c$ if and only if
  $a^* (z) \in \left\langle z^{W_\eps} - 1 \right\rangle$
  which is in turn equivalent to the existence of
  a representation
  \[
  a^* (z) = b_1^* (z) \left( z^{\left( W_\eps \right)_1} - 1 \right)
  +  b_2^* (z) \left( z^{\left( W_\eps \right)_2} - 1 \right)
  = B^* (z) \, \left[ z^{W_\eps} - 1 \right],
  \]
  which is nothing but $S_{a,W_\eps} = S_{B,W_\eps} \nabla$.
\end{proof}

\begin{lemma}\label{L:NullFactFunc}
  Suppose that a compactly supported function $f$ satisfies $f * c = 0$ for
  all constant sequences, then there exists a compactly supported, continuous
  $1 \times 2$ matrix function $G$ such that $f * c = G * \nabla c$ for
  all $c \in \ell_\infty \left( \Z^2 \right)$.
\end{lemma}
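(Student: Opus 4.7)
My plan is to translate the statement into a pointwise identity. Unwinding $(\nabla c)_j(\alpha) = c(\alpha - \eta_j) - c(\alpha)$ and reindexing the convolution, one checks that $f * c = G * \nabla c$ holding for every $c \in \ell_\infty(\Z^2)$ is equivalent to the pointwise equation
\[
  f(x) = \bigl(G_1(x - \eta_1) - G_1(x)\bigr) + \bigl(G_2(x - \eta_2) - G_2(x)\bigr), \qquad x \in \R^2,
\]
so the task reduces to constructing continuous, compactly supported $G_1,G_2$ realizing this ``discrete Poincar\'e decomposition'' of $f$. The hypothesis $f*c=0$ for every constant $c$ collapses to the single condition $\sum_{\alpha \in \Z^2} f(\cdot - \alpha) = 0$.

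The engine of the construction is the elementary one--dimensional fact: if $\phi \in C(\R)$ is compactly supported and $\sum_{n \in \Z}\phi(\cdot - n) \equiv 0$, then $\Phi(x) := \sum_{n \ge 1} \phi(x+n)$ is continuous, compactly supported (zero for $x$ large positive by $\supp \phi$, and zero for $x$ large negative because the vanishing hypothesis forces $\Phi = -\sum_{n \le 0} \phi(\cdot + n)$), and satisfies $\Phi(x-1) - \Phi(x) = \phi(x)$.

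To bootstrap this to two dimensions I fix a partition of unity $\chi \in C(\R)$ with compact support and $\sum_{n \in \Z} \chi(\cdot - n) \equiv 1$ (the hat function will do). First, form the partial periodization $F_1(x_1,x_2) := \sum_{\alpha_1 \in \Z} f(x_1 - \alpha_1, x_2)$, which is continuous, $1$--periodic in $x_1$, compactly supported in $x_2$, and, by the hypothesis, has vanishing $x_2$--periodization $\sum_{m \in \Z} F_1(x_1, x_2 - m) = 0$. The one--dimensional lemma in $x_2$ produces $H_1(x_1,x_2) := \sum_{n \ge 1} F_1(x_1, x_2 + n)$, compactly supported in $x_2$ but still $1$--periodic in $x_1$. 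Localizing by $\chi$ yields $G_2(x_1,x_2) := \chi(x_1)\,H_1(x_1,x_2) \in C(\R^2)$ with compact support, and a direct computation gives $G_2(x_1,x_2 - 1) - G_2(x_1,x_2) = \chi(x_1)\,F_1(x_1,x_2)$.

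Second, I set $f'(x_1,x_2) := f(x_1,x_2) - \chi(x_1) F_1(x_1,x_2)$, still continuous and compactly supported. Using $F_1(x_1 - \alpha_1, x_2) = F_1(x_1,x_2)$ for $\alpha_1 \in \Z$ together with $\sum_{\alpha_1 \in \Z} \chi(x_1 - \alpha_1) = 1$, the $x_1$--periodization telescopes to $\sum_{\alpha_1 \in \Z} f'(x_1 - \alpha_1, x_2) = F_1 - F_1 = 0$ for every fixed $x_2$. The one--dimensional lemma applied in $x_1$ with $x_2$ frozen then produces $G_1(x_1,x_2) := \sum_{n \ge 1} f'(x_1 + n, x_2)$, which is continuous, compactly supported in $x_1$ uniformly in $x_2$, and compactly supported in $x_2$ because $f'$ vanishes outside the compact support of $f$. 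By construction $G_1(x_1 - 1, x_2) - G_1(x_1,x_2) = f'(x_1,x_2)$, so adding the two telescoping identities gives exactly $f$. The main conceptual obstacle is that naive telescoping in a single coordinate only yields compactness in that same coordinate; the role of the cutoff by $\chi$ is precisely to trade the residual non--compactness of the first step for a genuine compactly supported residual $f'$ whose $x_1$--periodization vanishes, allowing the 1D lemma to be deployed a second time in the complementary direction.
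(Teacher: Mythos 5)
Your construction is correct, and it takes a genuinely different route from the paper. The paper argues fiberwise and algebraically: for each $x \in [0,1]^2$ it forms the finitely supported sequence $f_x = \left( f(x+\alpha) : \alpha \in \Z^2 \right)$, notes that the hypothesis forces $f_x^*(z) \in \left\langle z - 1 \right\rangle$, writes $f_x^*(z) = G_x^*(z)\,[z-1]$ via the orthogonal reduction process of \cite{Sauer01} (whose continuity in the input guarantees that $G_x$ depends continuously on $x$), and then pastes $G(x+\alpha) = G_x(\alpha)$. You instead reduce the statement to the pointwise ``discrete Poincar\'e'' identity $f = \left( G_1(\cdot - \eta_1) - G_1 \right) + \left( G_2(\cdot - \eta_2) - G_2 \right)$ and build $G_1, G_2$ explicitly by one--dimensional telescoping sums, using the cutoff $\chi$ with $\sum_n \chi(\cdot - n) \equiv 1$ to repair the loss of compact support caused by the partial periodization $F_1$; all the verifications (telescoping, compact support from the vanishing periodization on one side and from $\supp f$ on the other, local finiteness giving continuity, and the sufficiency of the pointwise identity for $f*c = G*\nabla c$ on all of $\ell_\infty$) check out. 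Your argument buys self-containedness, an entirely elementary proof with explicit support bounds for $G$ in terms of $\supp f$ and $\supp \chi$, and no appeal to constructive ideal theory; the paper's argument buys uniformity with the neighbouring Lemma~\ref{L:NullFactMask} and with the general quotient-ideal framework, so that the same reasoning applies verbatim to other scaling matrices and ideals $\left\langle z^{W_\eps} - 1 \right\rangle$, at the cost of invoking the continuity of the reduction process (and of quietly handling the matching of the pieces $G_x$ across the boundary of $[0,1]^2$). One small wording caveat: you claim equivalence of the operator identity with the pointwise identity, but you only need (and only use) sufficiency, which is immediate from the locally finite reindexing you describe; necessity would follow by testing against shifted delta sequences, which are in $\ell_\infty$, so the claim is harmless.
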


\begin{proof}
  For any $x \in [0,1]^2$ we consider the sequence $f_x = \left( f(x+\alpha)
    \;:\; \alpha \in \Z^2 \right)$. Since $f$ is compactly supported, any
  such sequence $f_x$, $x \in [0,1]^2$
  has finite support and since $f$ is continuous, the map
  $x \mapsto f_x$ is a continuous one.

  By assumption, $f_x * c = 0$ for any $x$ and any constant sequence $c$,
  hence, with the scaling matrix $I$, the same methods as above yield
  that $f_x^* \in \left\langle z^I - 1 \right\rangle = \left\langle z - 1
  \right\rangle$. Consequently, we have that
  \[
  f_x^* (z) = g_{x,1}^* (z) \, \left( z_1 - 1 \right) +  g_{x,2}^* (z) \,
  \left( z_2 - 1 \right)
  = G_x^* (z) \, \left[ z - 1 \right]
  \]
  where, like $f_x$ and $f_x^* (z)$, also $G_x^* (z)$ depend continuously
  on $x$ as they can be obtained by applying the orthogonal reduction process
  from \cite{Sauer01}. Therefore, the function $G$, defined as
  \[
  G (x + \alpha) = G_x (\alpha), \qquad x \in [0,1]^2, \quad \alpha \in \Z^2
  \]
  has the properties claimed in the statement of the lemma.
\end{proof}

\noindent Now we are in position to prove the sufficiency of the
spectral radius condition which we state as a separate proposition.

\begin{proposition}\label{P:Convergence<=}
  The adaptive directional subdivision scheme based on the masks $a_\eps$, $\eps \in
  \{0,1\}$ converges, if $a_\eps^* (z) \in I$ and the
  representation masks $B_\eps$, $\eps \in \{0,1\}$, satisfy
  $\rho \left( B_0, B_1 \,|\, \nabla \right) < 1$.
\end{proposition}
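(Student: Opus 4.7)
The plan is to follow the classical Dahmen--Micchelli style argument, adapted to the tree-valued situation, using the refinable test function characterization from Theorem~\ref{T:AlterConvDesc}. Fix a compactly supported, $W_0$-refinable test function $g$ with mask $b$, whose existence (even with arbitrary smoothness) is guaranteed by the tensor-product B--spline construction of Lemma~\ref{lemma:maskgeneration1}. For each $\eps \in E_\infty$ set
\[
h_n^{(\eps)} := T_{P_n\eps}\, g = \sum_{\alpha \in \Z^2} a_{P_n\eps}(\alpha)\, g\!\left(W_{P_n\eps} \cdot - \alpha\right),
\]
so that, by Theorem~\ref{T:AlterConvDesc}, it suffices to prove that $\{h_n^{(\eps)}\}_n$ converges uniformly to a nonzero uniformly continuous function $f_\eps$, with uniform control in $\eps$.

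The heart of the argument is a telescoping estimate. Using the operator identity $T_{P_{n+1}\eps} = T_{P_n \eps}\, T_{\eps_{n+1}}$ from the proof of Theorem~\ref{T:RefEq}, we obtain
\[
h_{n+1}^{(\eps)} - h_n^{(\eps)} = T_{P_n\eps}\, G_{\eps_{n+1}}, \qquad G_\eta := T_\eta g - g, \quad \eta \in \{0,1\}.
\]
The compactly supported continuous function $G_\eta$ annihilates constants, that is, $\sum_{\gamma \in \Z^2} G_\eta(\cdot - \gamma) = 0$: this follows from combining the partition of unity property of $g$ with the sum rule $\sum_\beta a_\eta(\mu + W_\eta \beta) = 1$, which is furnished by the hypothesis $a_\eta^* \in I = \langle z^{W_\eta}-1\rangle : \langle z-1\rangle$ via Theorem~\ref{T:QuotId}. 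Lemma~\ref{L:NullFactFunc} therefore provides compactly supported continuous $1\times 2$ matrix functions $\Psi_\eta$ with $G_\eta * c = \Psi_\eta * \nabla c$ for every $c \in \ell_\infty(\Z^2)$. Applied to $c = a_{P_n\eps}$, and combined with the intertwining relation $\nabla S_{a_\eps,W_\eps} = S_{B_\eps,W_\eps}\nabla$ (which is exactly the translation of (\ref{eq:QIdRep})), this yields
\[
\bigl(T_{P_n\eps} G_{\eps_{n+1}}\bigr)(x) = \bigl(G_{\eps_{n+1}} * a_{P_n\eps}\bigr)\!\left(W_{P_n\eps} x\right)
= \bigl(\Psi_{\eps_{n+1}} * S^B_{P_n\eps} \nabla\delta\bigr)\!\left(W_{P_n\eps} x\right).
\]
Since $\Psi_\eta$ is compactly supported and continuous, convolution with it is bounded on $\ell_\infty$, so there is a constant $C$, independent of $\eps$ and $n$, with
\[
\left\| h_{n+1}^{(\eps)} - h_n^{(\eps)} \right\|_\infty \le C\, \left\| S^B_{P_n\eps}\, \nabla\delta \right\|_\infty.
\]

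The spectral radius assumption $\rho(B_0,B_1 \,|\, \nabla) < 1$ implies, after a standard block argument, that there exist $D > 0$ and $\widehat\rho \in (0,1)$ with $\|S^B_{P_n\eps}\nabla\delta\|_\infty \le D\, \widehat\rho^{\,n}$ uniformly over all $\eps \in E_\infty$ and $n \in \N$. Consequently $\{h_n^{(\eps)}\}_n$ is uniformly Cauchy in $\|\cdot\|_\infty$, with rate independent of $\eps$, and its uniform limit $f_\eps$ is continuous. A telescoping support estimate shows $\operatorname{supp}\bigl(h_n^{(\eps)}\bigr) \subset W_{P_n\eps}^{-1}\bigl(\operatorname{supp}(a_{P_n\eps}) + \operatorname{supp}(g)\bigr)$, and since $\|W_{P_k\eps}^{-1}\|$ decays geometrically in $k$, these supports lie in a compact set independent of $n$ and $\eps$; thus $f_\eps$ is compactly supported and continuous, hence uniformly continuous. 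Finally, integrating the identity $\sum_\alpha h_n^{(\eps)}(\cdot - W_{P_n\eps}^{-1}\alpha)$-type relation, or directly computing $\int h_n^{(\eps)} = |\det W_{P_n\eps}|^{-1}\bigl(\sum_\alpha a_{P_n\eps}(\alpha)\bigr)\int g = \int g = 1$ (using the iterated sum rule), gives $\int f_\eps = 1$, so $f_\eps \neq 0$. The main technical obstacle is keeping all estimates uniform over the infinite binary tree $E_\infty$; this is ensured by the fact that both the constant $C$ arising from $\Psi_0,\Psi_1$ and the geometric decay rate from the joint spectral radius are finite maxima over the two-element index set and hence automatically uniform.
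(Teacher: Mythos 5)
Your proof is correct, and while it follows the same overall strategy as the paper (show that the quasi-interpolants $\left( g * S_{P_n \eps}\delta\right)\left( W_{P_n\eps}\cdot\right) = T_{P_n\eps} g$ form a Cauchy sequence, with the increments controlled through a difference factorization and the restricted joint spectral radius), the way you handle the telescoping step is genuinely different. The paper distinguishes the cases $\eps_n = 0$ and $\eps_n = 1$: for $\eps_n=0$ it uses the $W_0$--refinability of $g$ to rewrite the previous level over the finer grid and then factors the mask difference $S_0 - S_{b,W_0}$ through $\nabla$ via Lemma~\ref{L:NullFactMask}; for $\eps_n=1$ it introduces the sheared, $W_1$--refinable function $g_1 = g(U\cdot)$ (Lemma~\ref{lemma:relationrefinability}), splits off the terms $(g-g_1)*S_{P_n\eps}c$ and handles them with Lemma~\ref{L:NullFactFunc}. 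You instead telescope directly at the operator level, $h_{n+1}^{(\eps)} - h_n^{(\eps)} = T_{P_n\eps}\left( T_{\eps_{n+1}} g - g\right)$, observe that $T_\eta g - g$ annihilates constants using only the sum rule $a_\eta^* \in I$ and the partition of unity of $g$, and then apply Lemma~\ref{L:NullFactFunc} once, combined with the iterated intertwining $\nabla S_{P_n\eps} = S^B_{P_n\eps}\nabla$. This removes the case distinction, the auxiliary function $g_1$, and the use of Lemma~\ref{L:NullFactMask}, and it shows that the refinability of $g$ is not actually needed (you assume it but never use it — only the test-function properties enter); the paper's route, by contrast, stays closer to the classical refinable-test-function argument of Dahmen--Micchelli. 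You also make explicit the final ``standard means'' step that the paper leaves implicit: the uniform support bound via the geometric decay of $\left\| W_{P_k\eps}^{-1}\right\|$, uniform continuity from compact support, and $f_\eps \neq 0$ from $\int h_n^{(\eps)} = 1$, all of which are correct (the sum rule indeed gives $\sum_\alpha a_{P_n\eps}(\alpha) = 8^n = \left|\det W_{P_n\eps}\right|$). The only cosmetic issues are the unused refinability hypothesis on $g$ and a stray $\widehat\tau$/$\widehat\eps$-style index slip in the displayed identity for $T_{P_n\eps} G_{\eps_{n+1}}$ (it should read $G_{\eps_{n+1}} * a_{P_n\eps}$ evaluated at $W_{P_n\eps}x$, as you in fact use); neither affects the argument.
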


\begin{proof}
  For any $\theta \in \left( 0,1-\rho \right)$,
 there exists, by standard
  properties of the (joint) spectral radius, a constant $C > 0$ such that
  \begin{equation*}
    \left\| S^B_{P_n \eps} \nabla c \right\|_\infty \le
    C \left( \rho + \theta \right)^n = C \sigma^n,
    \qquad n \in \N, \quad \eps \in
    E_\infty, \quad c \in \ell_\infty \left( \ZZ^2 \right),
  \end{equation*}
  where $0 < \sigma := \rho + \theta < 1$.

  Now, let $\eps \in E_\infty$ be given and suppose first that
  $\eps_n = 0$. Then, by the refinability of the test function $g$ from \eqref{eq:choicetest} and
  Lemma~\ref{L:NullFactMask} which ensures the existence of a finitely
  supported matrix mask $F$ such that $S_0 - S_{b,W_0} = S_{F,W_0} \nabla$, we have
  that
  \begin{eqnarray*}
    \lefteqn{
      \left\| g * S_{P_n \eps} c \left( W_{P_n \eps} \cdot \right) -
        g * S_{P_{n-1} \eps} c \left( W_{P_{n-1} \eps} \cdot \right)
      \right\|_\infty } \\
    & = & \hspace*{-0.2cm} \left\| g * S_{P_n \eps} c \left( W_{P_n \eps} \cdot
      \right) - g * S_{b,W_0} S_{P_{n-1} \eps} c \left( W_{P_{n} \eps}
        \cdot \right) \right\|_\infty \\
    & = & \left\| g *  \left( S_0 - S_{b,W_0} \right) S_{P_{n-1} \eps} c
    \right\|_\infty \\
    & \le & \hspace*{-0.2cm} B_g \, \left\| \left( S_0 - S_{b,W_0} \right)
      S_{P_{n-1} \eps} c \right\|_\infty
    =  B_g \, \left\| S_{F,W_0} \nabla S_{P_{n-1} \eps} c \right\|_\infty \\
    & = & B_g \, \left\| S_{F,W_0} S_{P_{n-1} \eps}^B \nabla c \right\|_\infty
    \le B_g \, \left\| S_{F,W_0} \right\| \, C \, \sigma^{n-1}.
  \end{eqnarray*}
  If on the other hand $\eps_n = 1$, by using the function $g_1 = g(U\,\cdot)$
  (cf. Lemma \ref{lemma:relationrefinability}) we pass to the estimate
  \begin{eqnarray*}
    \lefteqn{
      \left\| g * S_{P_n \eps} c \left( W_{P_n \eps} \cdot \right) -
        g * S_{P_{n-1} \eps} c \left( W_{P_{n-1} \eps} \cdot \right)
      \right\|_\infty } \\
    & \le & \left\| \left( g - g_1 \right) * S_{P_n \eps} c
      \left( W_{P_n \eps} \cdot \right) \right\|_\infty
    + \left\| \left( g - g_1 \right) * S_{P_{n-1} \eps} c
      \left( W_{P_{n-1} \eps} \cdot \right) \right\|_\infty \\
    & & \qquad + \left\| g_1 * S_{P_n \eps} c \left( W_{P_n \eps}
        \cdot \right)
      - g_1 * S_{P_{n-1} \eps} c \left( W_{P_{n-1} \eps}
        \cdot \right) \right\|_\infty.
  \end{eqnarray*}
  For the first two terms we now make use of Lemma~\ref{L:NullFactFunc}
  to obtain that
  \begin{eqnarray*}
    \lefteqn{
      \left\| \left( g - g_1 \right) * S_{P_n \eps} c
        \left( W_{P_n \eps} \cdot \right) \right\|_\infty } \\
    & = & \left\| \left( g - g_1 \right) * S_{P_n \eps} c \right\|_\infty
    = \left\| G * \nabla S_{P_n \eps} c \right\|_\infty
    = \left\| G * S_{P_n \eps}^B \nabla c \right\|_\infty
    \le B_G \, C \, \sigma^n
  \end{eqnarray*}
  and
  \[
  \left\| \left( g - g_1 \right) * S_{P_{n-1} \eps} c
    \left( W_{P_{n-1} \eps} \cdot \right) \right\|_\infty \le B_G \, C \,
  \sigma^{n-1},
  \]
  respectively, while the third term can now be estimated as above again.
  In summary, we obtain that there exists a constant $D > 0$ such that
  \[
  \left\| g * S_{P_n \eps} c \left( W_{P_n \eps} \cdot \right) -
    g * S_{P_{n-1} \eps} c \left( W_{P_{n-1} \eps} \cdot \right)
  \right\|_\infty \le D \, \sigma^{n-1}
  \]
  so that for $m \in \N$
  \[
  \left\| g * S_{P_{n+m} \eps} c \left( W_{P_{n+m} \eps} \cdot
    \right) - g * S_{P_n \eps} c \left( W_{P_n \eps} \cdot \right)
  \right\|_\infty \le D \, \frac{\sigma^n}{1-\sigma}.
  \]
  In other words, the sequence $g * S_{P_n \eps} c \left( W_{P_n
      \eps} \cdot \right)$ is a Cauchy sequence of continuous functions
  and thus must converge to a limit function for $n \to \infty$.
  Convergence of the subdivision scheme then follows by standard means.
\end{proof}

\noindent The proof of the converse statement of
Proposition~\ref{P:Convergence<=} is based on the estimate
\begin{eqnarray*}
  \left\| S^B_{P_n \eps} \, \nabla \delta \right\|_\infty & = &
  \left\| \nabla S_{P_n \eps} \, \delta \right\|_\infty = \left\|
    \begin{pmatrix}
      S_{P_n \eps} \delta \left( \cdot - \eta_1 \right) -
      S_{P_n \eps} \delta \left( \cdot \right) \\
      S_{P_n \eps} \delta \left( \cdot - \eta_2 \right) -
      S_{P_n \eps} \delta \left( \cdot \right)
    \end{pmatrix}
  \right\|_\infty \\
  & = & \max_{j=1,2} \left\| S_{P_n \eps} \delta
    \left( \cdot - \eta_j
    \right) - S_{P_n \eps} \delta \left( \cdot \right) \right\|_\infty \\
  & \le & \max_{j=1,2} \left(
    \left\| S_{P_n \eps} \delta \left( \cdot - \eta_j \right)
      - \sigma_{W_{P_n \eps}^{-1}} f \left( \cdot - \eta_j \right)
    \right\|_\infty \hspace*{-0.2cm}
    + \left\| S_{P_n \eps} \delta - \sigma_{W_{P_n \eps}^{-1}} f
    \right\|_\infty \right. \\
  & & \qquad \left.
    + \left\| \sigma_{W_{P_n \eps}^{-1}} f \left( \cdot - \eta_j
      \right)
      - \sigma_{W_{P_n \eps}^{-1}} f \left( \cdot \right)
    \right\|_\infty
  \right),
\end{eqnarray*}
hence,
\begin{equation*}
  \left\| S^B_{P_n \eps} \, \nabla \delta \right\|_\infty \le \max_{j=1,2} \left(
    2 \left\| S_{P_n \eps} \delta - \sigma_{W_{P_n \eps}^{-1}} f
    \right\|_\infty
    + \left\| \sigma_{W_{P_n \eps}^{-1}} f \left( \cdot - \eta_j
      \right)
      - \sigma_{W_{P_n \eps}^{-1}} f \left( \cdot \right)
    \right\|_\infty
  \right).
\end{equation*}
If we assume that the subdivision scheme converges with uniformly
continuous limit function, then the right hand side converges to
zero, hence also $\left\| S^B_{P_n \eps} \, \nabla c \right\|_\infty \to 0$
for $n \to \infty$ and any $c \in \ell_\infty \left( \ZZ^2 \right)$.
This, however, is not sufficient for our purposes. To show that the
restricted spectral radius of $\rho \left( B_0,B_1 \,|\, \nabla
\right)$ is less than one, we have to show that
\begin{equation}
  \label{eq:S_BEstToShow}
  \left\| S^B_{P_n \eps} \, \nabla c \right\|_\infty \le C \,
  \theta_n \, \left\|
    \nabla c \right\|_\infty, \qquad \lim_{n \to \infty} \theta_n = 0,
\end{equation}
which will be prepared in the next lemmas. Here we follow the
outline of a proof from \cite{CavarettaDahmenMicchelli91} and show
that there exists a constant $C > 0$ such that
\[
\left\| \nabla S_{P_n \eps} c \right\|_\infty \le C \, \theta_n \,
\left\| \nabla c \right\|_\infty, \qquad \lim_{n \to \infty} \theta_n = 0,
\]
from which (\ref{eq:S_BEstToShow}) follows immediately. We begin
with an estimate on the limit function $f_\eps$.

\begin{lemma}\label{L:Conv=>SR1}
  If $a_0$ and $a_1$ define a convergent subdivision scheme,
  then there exists a constant $C_1 > 0$
  such that for any $\eps \in E_\infty$ and any
  $c \in \ell_\infty \left( \ZZ^2 \right)$
  \[
  \left| f_\eps * c (x) - f_\eps * c (y) \right| \le
  C_1 \, \omega \left( f_\eps, \delta \right) \,
  \left\| \nabla c \right\|_\infty, \qquad
  \left\| x - y \right\| \le \delta < 1.
  \]
\end{lemma}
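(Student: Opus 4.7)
The plan is to exploit the partition of unity $\sum_{\alpha \in \Z^2} f_\eps(\cdot - \alpha) = 1$ so as to subtract a common reference value from $c$, converting $c$ into differences controlled by $\|\nabla c\|_\infty$, and then to use uniform compact support of $f_\eps$ to restrict the sum to a bounded lattice neighbourhood of $x$ and $y$.

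First I would establish two uniform properties. \emph{(Uniform compact support)} Iterating the refinement equation \eqref{eq:RefEq} yields $\supp f_\eps \subseteq W_{\eps_1}^{-1}(\supp f_{\widehat\eps} + \supp a_{\eps_1})$. Since $\supp a_0 \cup \supp a_1$ is finite and both $W_0^{-1}, W_1^{-1}$ have spectral radius $\tfrac{1}{2}$, a standard contraction/attractor argument yields a single compact $K \subset \R^2$, independent of $\eps$, with $\supp f_\eps \subseteq K$ for all $\eps \in E_\infty$. \emph{(Partition of unity)} By Lemma~\ref{lemma:subdivisionconvergent}, the sum rule \eqref{eq:ConSumRule0} holds for $a_0$ and $a_1$. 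Setting $G_\eps(x) := \sum_\beta f_\eps(x-\beta)$ and applying \eqref{eq:RefEq} together with the sum rule, a direct substitution produces the recursion $G_\eps(x) = G_{\widehat\eps}(W_{\eps_1} x)$. Iterating, $G_\eps$ is invariant under the lattice $W_{P_n\eps}^{-1}\Z^2$ for every $n$; since $\|W_{P_n\eps}^{-1}\| \to 0$, the union $\bigcup_n W_{P_n\eps}^{-1}\Z^2$ is dense in $\R^2$, and continuity of $G_\eps$ forces it to be constant. The normalization $\int f_\eps = 1$ (a consequence of $\sum_\alpha a_\eps(\alpha) = |\det W_\eps|$, which itself follows from the sum rule) then pins the constant value to $1$.

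With these in hand, the estimate is direct. For $x,y$ with $\|x-y\|\le\delta<1$, only $\alpha \in S := \{\beta \in \Z^2 \,:\, (x-\beta)\in K \text{ or } (y-\beta)\in K\}$ contribute, and $|S| \le N_0$ with $N_0$ depending only on $K$. Since $\sum_\alpha (f_\eps(x-\alpha)-f_\eps(y-\alpha)) = 0$ by partition of unity, we may pick any $\alpha_* \in S$ and write
\[
f_\eps * c(x) - f_\eps * c(y) = \sum_{\alpha \in S}\bigl(f_\eps(x-\alpha) - f_\eps(y-\alpha)\bigr)\bigl(c(\alpha) - c(\alpha_*)\bigr).
\]
Every $\alpha \in S$ is reachable from $\alpha_*$ by a lattice path of length at most $N_1$ (uniform in $\eps$, since $S$ has diameter at most $\mathrm{diam}(K)+1$), so telescoping with backward differences gives $|c(\alpha)-c(\alpha_*)| \le N_1 \|\nabla c\|_\infty$. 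Combining with $|f_\eps(x-\alpha)-f_\eps(y-\alpha)| \le \omega(f_\eps,\delta)$ delivers the bound with $C_1 := N_0 N_1$.

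The main technical obstacle is the partition of unity in the adaptive regime: each $f_\eps$ is not self-refinable but only cross-refinable into $f_{\widehat\eps}$, so dilation invariance has to be chained along the whole sequence $\eps$, and constancy must be extracted from density of the iterated lattices rather than from a single periodicity. Once that is established, the remaining argument is straightforward bookkeeping on a bounded lattice neighbourhood of $x$.
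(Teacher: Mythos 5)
Your core estimate is exactly the paper's: subtract a common reference value from $c$ (you use $c(\alpha_*)$, the paper uses the midpoint $w$ of $\max$ and $\min$ of $c$ over the relevant index set), use the vanishing of $\sum_\alpha\bigl(f_\eps(x-\alpha)-f_\eps(y-\alpha)\bigr)$, bound the number of contributing lattice points by uniform compact support and $\delta<1$, and telescope $c(\alpha)-c(\alpha_*)$ along lattice paths to bring in $\|\nabla c\|_\infty$. Where you genuinely diverge is in how the partition of unity is obtained. The paper gets it in one line: convergence plus preservation of constants (Lemma~\ref{lemma:subdivisionconvergent}) gives $f_\eps*1=1$ directly, since the limit function of the constant data $c\equiv 1$ is $f_\eps*1$ and the scheme reproduces that data at every level. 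You instead re-derive it from the refinement equation \eqref{eq:RefEq} and the sum rule via the recursion $G_\eps=G_{\widehat\eps}(W_{\eps_1}\cdot)$, invariance under the lattices $W_{P_n\eps}^{-1}\Z^2=4^{-n}\Z\times 2^{-n}\Z$, and density; this chain is correct and is a nice self-contained argument in the non-stationary setting, though heavier than necessary. One caveat: your normalization step is not justified as stated. In this cross-refinable situation, $\sum_\alpha a_{\eps_1}(\alpha)=|\det W_{\eps_1}|$ only yields $\int f_\eps=\int f_{\widehat\eps}$, i.e.\ invariance of the integral along the chain, not its value; pinning $\int f_\eps=1$ would require going back to the convergence definition (e.g.\ integrating the approximants $g*S_{P_n\eps}\delta(W_{P_n\eps}\cdot)$). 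This flaw is harmless here, because your estimate only uses constancy of $G_\eps$, not that the constant equals $1$, so you could simply drop that claim. Your uniform-support argument (a joint contraction of the $W_\eps^{-1}$, which holds since all products shrink like $2^{-n}$, not merely because each factor has spectral radius $\tfrac12$) is more explicit than the paper, which simply assumes a common support cube $[-N,N]^2$.
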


\begin{proof}
  Since, according to Lemma~\ref{lemma:subdivisionconvergent},
  convergence implies the preservation of constant sequences by the
   subdivision scheme, we also have that
  \[
  1 = f_\eps * 1 = \sum_{\alpha \in \Z^2} f_\eps (\cdot - \alpha)
  \]
  and thus, for any $c \in \ell_\infty$, any $w \in \R$ and any
  $x,y \in \R^2$ with $\left\| x - y \right\| \le \delta$,
  \begin{eqnarray*}
    \left| f_\eps * c (x) - f_\eps * c (y) \right|
    & = & \left| \sum_{\alpha \in \Z^2} \left( f_\eps (x-\alpha) -
        f_\eps (y-\alpha) \right) \left( c(\alpha) - w \right) \right| \\
    & \le & \# \Omega_{x,y} \,\cdot\, \omega \left( f_\eps, \delta \right) \,\cdot\,
  \max_{\alpha \in \Omega_{x,y}} \left| c(\alpha) - w \right|,
  \end{eqnarray*}
  where
  \[
  \Omega_{x,y} = \left\{ \alpha \in \Z^2 \;:\; f_\eps (x-\alpha) \neq 0
  \right\} \cup \left\{ \alpha \in \Z^2 \;:\; f_\eps (y-\alpha) \neq 0
  \right\}.
  \]
  Since $f_\eps$ is finitely supported, we have that $\# \Omega_{x,y}
  < \infty$. Specifically, if we assume that $f_\eps$ is supported on
  $[-N,N]^2$, then $\# \Omega_{x,y} \le \left( 2N+2 \right)^2$ as long as
  $\delta < 1$. Choosing
  \[
  w = \frac12 \left( \max_{\alpha \in \Omega_{x,y}} c(\alpha) +
    \min_{\alpha \in \Omega_{x,y}} c(\alpha) \right),
  \]
  it follows for any $\alpha \in \Omega_{x,y}$ that
  \[
  \left| c(\alpha) - w \right| \le
  \frac12 \left| \max_{\alpha \in \Omega_{x,y}} c(\alpha) +
    \min_{\alpha \in \Omega_{x,y}} c(\alpha) \right|
  \le \frac12 \, \# \Omega_{x,y} \left\| \nabla c \right\|_\infty,
  \]
  hence,
  \[
  \left| f_\eps * c \, (x) - f_\eps * c \, (y) \right| \le
  \frac12 \left( 2N+2 \right)^4 \, \omega \left( f_\eps, \delta \right) \,
  \left\| \nabla c \right\|_\infty
  \]
  as claimed.
\end{proof}

\noindent The next result concerns the difference between the
subdivision scheme and the limit function.

\begin{lemma}\label{L:Conv=>SR2}
  If the adaptive directional subdivision scheme based on the masks $a_\eps$, $\eps \in \{0,1\}$,
  then there exists a constant $C_2 > 0$ such that,  for any $n \in \N$, we have
  \[
  \left\| S_{P_n \eps} c - f_\eps * c \left(
      W_{P_n \eps}^{-1}
      \cdot \right) \right\|_\infty \le C_2 \,
  \left\| S_{P_n \eps}
    \delta - f_\eps \left( W_{P_n \eps}^{-1} \cdot \right)
  \right\|_\infty \, \left\| \nabla c \right\|_\infty.
  \]
\end{lemma}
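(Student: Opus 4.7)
The plan is to mirror the strategy of Lemma \ref{L:Conv=>SR1}: use the explicit sum representation of $S_{P_n \eps} c$ provided by Lemma \ref{lemma:IterMasks}, exploit that both $a_{P_n \eps}$ (with the scaling $W_{P_n \eps}$) and the integer translates of $f_\eps$ form partitions of unity, and then localize the sum using the finite support of the ingredients together with a lattice-path argument to extract the factor $\left\| \nabla c \right\|_\infty$.

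Concretely, I would first fix $\beta \in \ZZ^2$ and rewrite
\begin{equation*}
D_n(\beta) := S_{P_n \eps} c \, (\beta) - \left( f_\eps * c \right) \left( W_{P_n \eps}^{-1} \beta \right) = \sum_{\alpha \in \ZZ^2} c(\alpha) \, E_n \left( \beta - W_{P_n \eps} \alpha \right),
\end{equation*}
where $E_n(\gamma) := a_{P_n \eps}(\gamma) - f_\eps \left( W_{P_n \eps}^{-1} \gamma \right) = S_{P_n \eps} \delta (\gamma) - f_\eps \left( W_{P_n \eps}^{-1} \gamma \right)$, so that $\left\| E_n \right\|_\infty$ is exactly the first factor on the right-hand side of the desired inequality. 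Next, by iterating Lemma \ref{lemma:subdivisionconvergent} (each $S_{\eps_j}$ preserves constants, hence so does $S_{P_n \eps}$) one has $\sum_\alpha a_{P_n \eps}(\beta - W_{P_n \eps}\alpha) = 1$, while $\sum_\alpha f_\eps(W_{P_n \eps}^{-1}\beta - \alpha) = 1$ is the standard partition of unity enjoyed by the refinable limit function of any convergent subdivision scheme. Consequently $\sum_\alpha E_n(\beta - W_{P_n \eps}\alpha) = 0$, and I may replace $c(\alpha)$ by $c(\alpha) - w$ for any constant $w$ without changing $D_n(\beta)$.

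The crucial geometric step, which I expect to be the main obstacle, is to show that for each $\beta$ only a uniformly bounded number of $\alpha$ contribute to the sum and that these $\alpha$ lie in a lattice set of uniformly bounded $\ell_1$-diameter. Using the refinement relation $f_\eps = T_{\eps_1} f_{\widehat \eps}$ of Theorem \ref{T:RefEq}, the finite support of each $a_\eps$, $\eps \in \{0,1\}$, and the explicit shape of $W_{P_n \eps}^{-1}$ furnished by Lemma \ref{lemma:productsofW}, one obtains $\supp f_\eps \subseteq K_1$ for a compact set $K_1$ independent of $\eps$ and $\supp a_{P_n \eps} \subseteq W_{P_n \eps} K_0$ for a fixed compact $K_0$. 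Consequently the index set
\begin{equation*}
\Omega_{n,\eps,\beta} := \left\{ \alpha \in \ZZ^2 \;:\; a_{P_n \eps}(\beta - W_{P_n \eps}\alpha) \neq 0 \text{ or } f_\eps(W_{P_n \eps}^{-1}\beta - \alpha) \neq 0 \right\}
\end{equation*}
is contained in $W_{P_n \eps}^{-1} \beta - \left( K_0 \cup K_1 \right)$ and thus has cardinality and $\ell_1$-diameter bounded by a constant $C_0$ independent of $n$, $\eps$, and $\beta$. Controlling this uniformly in the presence of the unbounded shear entries of $W_\eps$ is the technical heart of the argument.

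With this in hand, the conclusion is a direct application of the trick from Lemma \ref{L:Conv=>SR1}: choosing $w$ as the midpoint of $\max_{\alpha \in \Omega_{n,\eps,\beta}} c(\alpha)$ and $\min_{\alpha \in \Omega_{n,\eps,\beta}} c(\alpha)$, and connecting any two points of $\Omega_{n,\eps,\beta}$ by a lattice path of length at most $C_0$, yields $|c(\alpha) - w| \le \tfrac12 C_0 \left\| \nabla c \right\|_\infty$ for every $\alpha \in \Omega_{n,\eps,\beta}$. Hence
\begin{equation*}
|D_n(\beta)| \le |\Omega_{n,\eps,\beta}| \cdot \left\| E_n \right\|_\infty \cdot \tfrac12 C_0 \left\| \nabla c \right\|_\infty \le C_2 \left\| E_n \right\|_\infty \left\| \nabla c \right\|_\infty
\end{equation*}
with $C_2$ independent of $n$, $\eps$, $\beta$, and $c$. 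Taking the supremum over $\beta$ gives the asserted bound.
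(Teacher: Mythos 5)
Your proposal is correct and follows essentially the same route as the paper: the same decomposition of $S_{P_n \eps} c - f_\eps * c ( W_{P_n \eps}^{-1}\cdot )$ into a sum against $c(\alpha)-w$ using preservation of constants, the same uniform bound on the number of contributing indices coming from compact supports, and the same midpoint choice of $w$ with a lattice-path estimate producing $\left\| \nabla c \right\|_\infty$. The only difference is cosmetic: you spell out the uniform control of the renormalized supports $W_{P_n\eps}^{-1}\supp a_{P_n\eps}$ (via Lemma \ref{lemma:productsofW}), which the paper simply subsumes by assuming supports in $[-N,N]^2$.
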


\begin{proof}
  We fix $n$, set, for abbreviation, $\widehat \eps = P_n \eps$,
  and assume again that $f_\eps$ as well as $a_0$ and $a_1$ are
  supported on $[-N,N]^2$. Again, we make use of the fact that $S_{\widehat
    \eps}$ and $f_\eps$ preserve constant data and obtain, for
  any $\alpha \in \Z^2$ and $w \in \R$, that
  \[
  S_{\widehat \eps} c (\alpha) - f_\eps * c \left(
    W_{\widehat \eps}^{-1} \alpha \right)
  = \sum_{\beta \in \Z^2} \left( a_{\widehat \eps}
    \left( \alpha - W_{\widehat \eps} \beta \right) - f_\eps
    \left( W_{\widehat \eps}^{-1} \alpha - \beta \right) \right)
  \left( c(\beta) - w \right).
  \]
  Since
  \[
  \Omega_{\alpha,\widehat \eps} = \left\{ \alpha \in \Z^2 \;:\;
    a_{\widehat \eps} \left( \alpha - W_{\widehat \eps} \beta \right)
    \neq 0  \right\} \cup  \left\{ \alpha \in \Z^2 \;:\;
    f_\eps \left( W_{\widehat \eps}^{-1} \alpha - \beta \right)
    \neq 0  \right\}
  \]
  again satisfies $\# \Omega_{\alpha,\widehat \eps} \le \left( 2N+2
  \right)^2$, the same judicious choice of $w$ as above leads to the
  estimate
  \[
  \left| S_{\widehat \eps} c (\alpha) - f_\eps * c \left(
      W_{\widehat \eps}^{-1} \alpha \right) \right|
  \le \left( 2N+2 \right)^4 \, \sup_{\alpha \in \Z^2} \left|
    a_{\widehat \eps} \left( \alpha \right) - f_\eps
    \left( W_{\widehat \eps}^{-1} \alpha \right) \right| \,
  \left\| \nabla c \right\|_\infty,
  \]
  from which the claim follows immediately.
\end{proof}

\noindent Now it is easy to complete the proof of the converse
statement for convergence which we formulate in the following way.

\begin{proposition}
  If the adaptive directional subdivision scheme based on the masks $a_\eps$, $\eps \in
  \{0,1\}$ converges then $a_\eps^* (z) \in I$ and the
  representation masks $B_\eps$, $\eps \in \{0,1\}$, satisfy
  $\rho \left( B_0, B_1 \,|\, \nabla \right) < 1$.
\end{proposition}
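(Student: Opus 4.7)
The first assertion, $a_\eps^* \in I$, is immediate from the machinery already built: Lemma~\ref{lemma:subdivisionconvergent} shows that convergence forces the sum rule of order $0$ for both $a_0$ and $a_1$, Theorem~\ref{T:QuotId} rephrases this as $a_\eps^* \in \langle z^{W_\eps} - 1 \rangle : \langle z - 1 \rangle$, and Theorem~\ref{T:QuotIdBasis} identifies both quotient ideals with the common ideal $I$.

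For the spectral radius bound, the plan is to upgrade the estimate on $\|S^B_{P_n \eps} \nabla \delta\|_\infty$ displayed in the excerpt to a uniform bound on arbitrary $\nabla c$. Using the decomposition
\[
\nabla S_{P_n \eps} c = \nabla\bigl(S_{P_n \eps} c - (f_\eps * c)(W_{P_n \eps}^{-1} \cdot)\bigr) + \nabla (f_\eps * c)(W_{P_n \eps}^{-1} \cdot),
\]
Lemma~\ref{L:Conv=>SR2} controls the first summand by $2C_2 \|S_{P_n \eps}\delta - f_\eps(W_{P_n \eps}^{-1}\cdot)\|_\infty \|\nabla c\|_\infty$, while Lemma~\ref{L:Conv=>SR1}, applied to $f_\eps * c$ at points separated by $\|W_{P_n \eps}^{-1}\eta_j\|_\infty \le \|W_{P_n \eps}^{-1}\|_\infty$, controls the second by $C_1\omega(f_\eps, \|W_{P_n \eps}^{-1}\|_\infty)\|\nabla c\|_\infty$. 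This produces $\|S^B_{P_n \eps}\nabla c\|_\infty \le C \theta_n(\eps)\|\nabla c\|_\infty$, where both contributions to $\theta_n(\eps)$ decay: the first by the convergence hypothesis, the second because Lemma~\ref{lemma:productsofW} gives $\|W_{P_n \eps}^{-1}\|_\infty \le 3 \cdot 2^{-n}$ uniformly in $\eps$ and $f_\eps$ is uniformly continuous.

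The principal obstacle I expect is promoting the pointwise decay $\theta_n(\eps) \to 0$ to the uniform decay $\Theta_n := \sup_{\eps \in E_\infty} C \theta_n(\eps) \to 0$, which is what the definition of the restricted joint spectral radius actually requires. My approach will be to exploit compactness of $E_\infty$ in the product topology: first argue by induction on the refinement equation of Theorem~\ref{T:RefEq}, together with finite support of $a_0,a_1$, that the whole family $\{f_\eps\}_{\eps \in E_\infty}$ is supported in a common compact set; then establish continuity of $\eps \mapsto f_\eps$ from the product topology to $C(\R^2)$ via the same refinement identity, and apply Arzel\`a--Ascoli to extract equicontinuity. The resulting common modulus of continuity, combined with uniformity in $\eps$ of the convergence rate in the first summand of $\theta_n(\eps)$ (itself obtained by the same compactness argument), gives $\Theta_n \to 0$.

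Once $\Theta_n \to 0$ is in hand, the conclusion is routine: fix $n_0$ with $\Theta_{n_0} =: \sigma < 1$, decompose any $n$ as $k n_0 + r$ with $0 \le r < n_0$, decompose $P_n \eps = (P_r \eps, \eta^{(1)},\ldots,\eta^{(k)})$ with $\eta^{(j)} \in E_{n_0}$, and iterate the one-block estimate $\|S^B_{\eta^{(j)}}|_{\nabla \ell_\infty}\| \le \sigma$ together with a uniform bound $M \ge \max_{\eps \in E_{n_0}}\|S^B_\eps|_{\nabla \ell_\infty}\|$ on the initial block. This yields $\sup_{\eps \in E_n}\|S^B_\eps|_{\nabla \ell_\infty}\| \le M^r \sigma^k$; taking $n$-th roots and letting $n \to \infty$ gives $\rho(B_0, B_1 \,|\, \nabla) \le \sigma^{1/n_0} < 1$, as required.
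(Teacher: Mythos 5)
The portion of your argument that overlaps the paper is precisely the paper's own proof: convergence gives the sum rules (Lemma~\ref{lemma:subdivisionconvergent}), hence $a_\eps^*\in I$ via Theorems~\ref{T:QuotId} and~\ref{T:QuotIdBasis}, and your splitting of $\nabla S_{P_n\eps}c$ into the deviation from $f_\eps * c\left(W_{P_n\eps}^{-1}\cdot\right)$ plus the increment of $f_\eps*c$, estimated by Lemmas~\ref{L:Conv=>SR2} and~\ref{L:Conv=>SR1} respectively, is exactly the displayed chain of inequalities in the paper. Your concluding block argument (fix $n_0$ with the uniform bound $<1$, write $n=kn_0+r$, use submultiplicativity of the restricted operator norms) is correct and is in fact needed: a bound $C\,\theta_n\,\|\nabla c\|_\infty$ with $\theta_n\to0$ at an unspecified rate only yields $\rho\le 1$ directly; the paper leaves this step implicit.

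The genuine gap is in your uniformization step, which you correctly identify as the crux but do not actually close. Continuity of $\eps\mapsto f_\eps$ does not follow from the refinement equation as you suggest: if $\eps$ and $\eps'$ agree in their first $n$ entries, iterating \eqref{eq:RefEq} only gives $f_\eps-f_{\eps'}=T_{\eps_1}\cdots T_{\eps_n}\bigl(f_{\mu}-f_{\mu'}\bigr)$ with $\mu,\mu'$ the shifted tails, and the transition operators are not contractions -- their norms may grow like $\max\{\|a_0\|_1,\|a_1\|_1\}^n$ -- so you would need uniform control of $\sup_\eps\|f_\eps\|_\infty$ and of the tail differences, which is essentially the uniformity you are trying to establish; the argument is circular. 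Moreover, Arzel\`a--Ascoli does not ``extract'' equicontinuity; it presupposes it (what you can use is that a compact subset of $C(K)$ is equicontinuous, but that again requires the unproved continuity of $\eps\mapsto f_\eps$). Finally, even granting a common compact support and a common modulus of continuity, the first summand $\|S_{P_n\eps}\delta-f_\eps(W_{P_n\eps}^{-1}\cdot)\|_\infty$ is known to tend to zero only pointwise in $\eps$ (that is all Definition~\ref{def:convergence} provides), and pointwise convergence on the compact space $E_\infty$ does not upgrade to uniform convergence without a Dini-type monotonicity or equicontinuity in $n$, neither of which you supply. So the pointwise-to-uniform step remains open in your write-up; in fairness, the paper's own proof is silent on exactly this point and argues with a fixed $\eps\in E_\infty$ throughout, so your proposal is no weaker than the published argument, but the compactness patch as sketched does not repair it.
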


\begin{proof}
  In Lemma~\ref{lemma:subdivisionconvergent}, it has already been shown that
  convergence implies $a_\eps^* (z) \in I$. Moreover,
  Lemma~\ref{L:Conv=>SR1} and Lemma~\ref{L:Conv=>SR2} allow us to conclude
  with $C = \max \{C_1,C_2\}$
  that, for any $c \in \ell_\infty$, we have
  \begin{eqnarray*}
    \left\| S^B_{P_n \eps} \nabla c \right\|_\infty & = &
    \left\| \nabla S_{P_n \eps} c \right\|_\infty\\
    & \le &  \left\| \nabla \left( S_{P_n} c - f_\eps * c \left(
          W_{P_n \eps}^{-1} \cdot \right) \right) \right\|_\infty +
    \left\| \nabla f_\eps * c \left(
        W_{P_n \eps}^{-1} \cdot \right) \right\|_\infty \\
    & \le & 2 C \left( \left\| S_{P_n \eps} \delta - f_\eps
        \left( W_{P_n \eps}^{-1} \cdot \right)
      \right\|_\infty + \omega \left( f_\eps,
        \left\| W_{P_n \eps}^{-1} \right\| \right) \right)
    \left\| \nabla c \right\|_\infty,
  \end{eqnarray*}
  and since
  \[
  \lim_{n \to \infty} \left\| S_{P_n \eps} \delta - f_\eps
    \left( W_{P_n \eps}^{-1} \cdot \right)
  \right\|_\infty + \omega \left( f_\eps,
    \left\| W_{P_n \eps}^{-1} \right\| \right) = 0
  \]
  by convergence of the adaptive directional subdivision scheme and uniform continuity of the
  limit function, our prove is complete.
\end{proof}


\section{Numerical Experiments}
\label{sec:numerics}

In this section we present some numerical experiments which
illustrate the ability of the developed class of subdivision schemes
to adaptively change the orientation of the data.

First, we recall that there exist a general way to construct masks,
which are refinable with respect to the dilation matrices $W_0$ and
$W_1$, compare Lemma \ref{lemma:maskgeneration1}.
Now let the mask $b \in \ell(\ZZ)$ be chosen by $b(-3) =
-\frac{1}{16} = b(3)$, $b(-1)= \frac{9}{16} = b(1)$, $b(0)=1$ and
$b(m) = 0$ otherwise, which coincides with the mask studied by
Deslauriers and Dubuc \cite{DD89}. We remark that this mask yields
a 2-interpolatory subdivision scheme (compare also Section
\ref{sec:shearletMRA}). By Lemma \ref{lemma:maskgeneration1}, we know that $a_0 =
\tilde{b} \otimes b$ is $W_0$-refinable, and $a_1 = a_0(U \cdot)$ is
$W_1$-refinable.

In Figure \ref{fig:line} we illustrate the refinement of the matrix
\begin{equation} \label{eq:C1}
C_1 = \begin{pmatrix}
     0 & 1 & 0\\
     0 & 1 & 0\\
     0 & 1 & 0
 \end{pmatrix},
\end{equation}
\begin{figure}[h]
\begin{center}
\begin{picture}(300,250)(0,0)
\put(-5,140)%
{\ifpdf
\includegraphics[width=5cm]{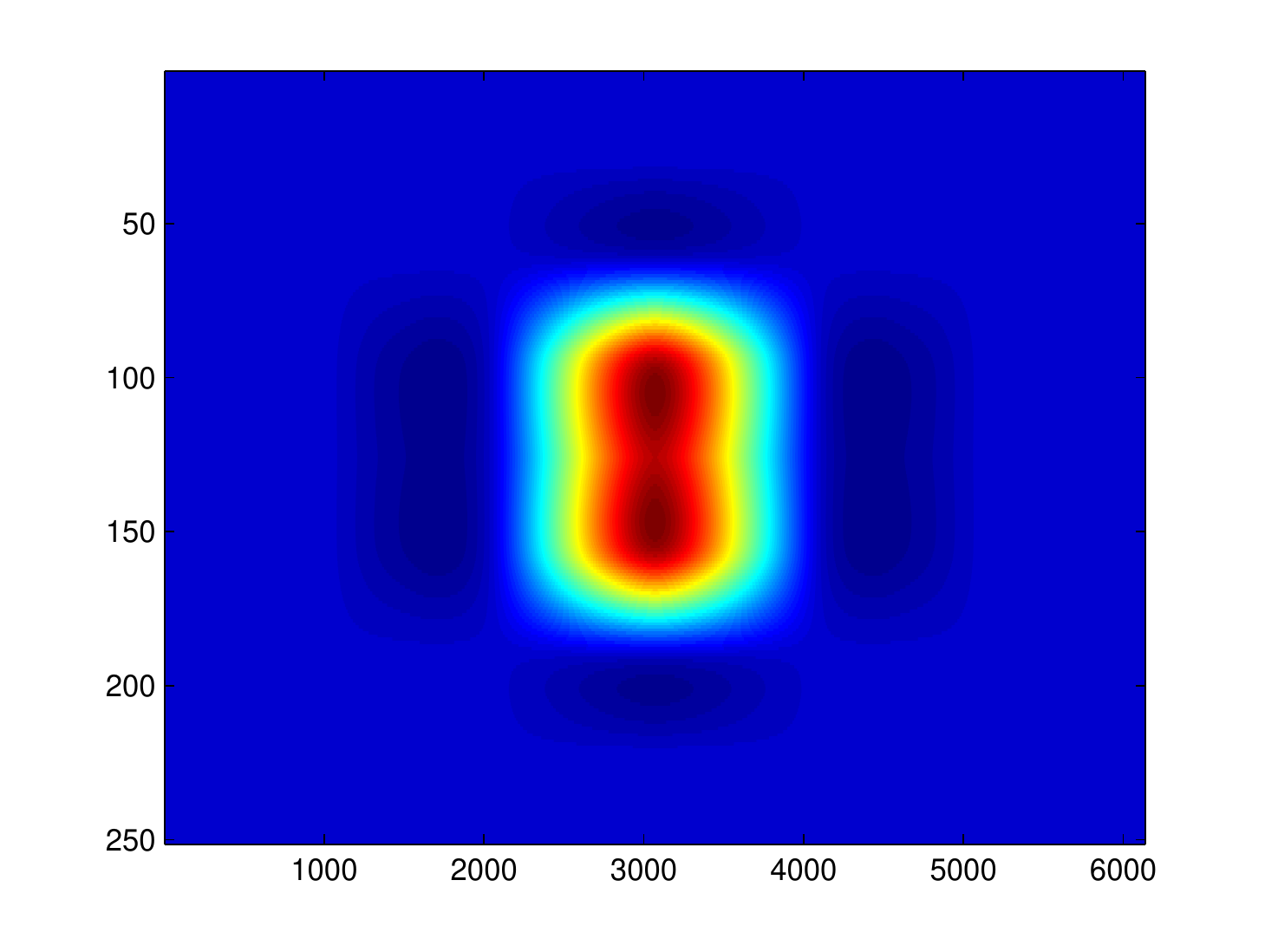}
\else
\includegraphics[width=5cm]{line1.eps}
\fi}
  \put(60,130){(a)}
  \put(160,140)%
  {\ifpdf
    \includegraphics[width=5cm]{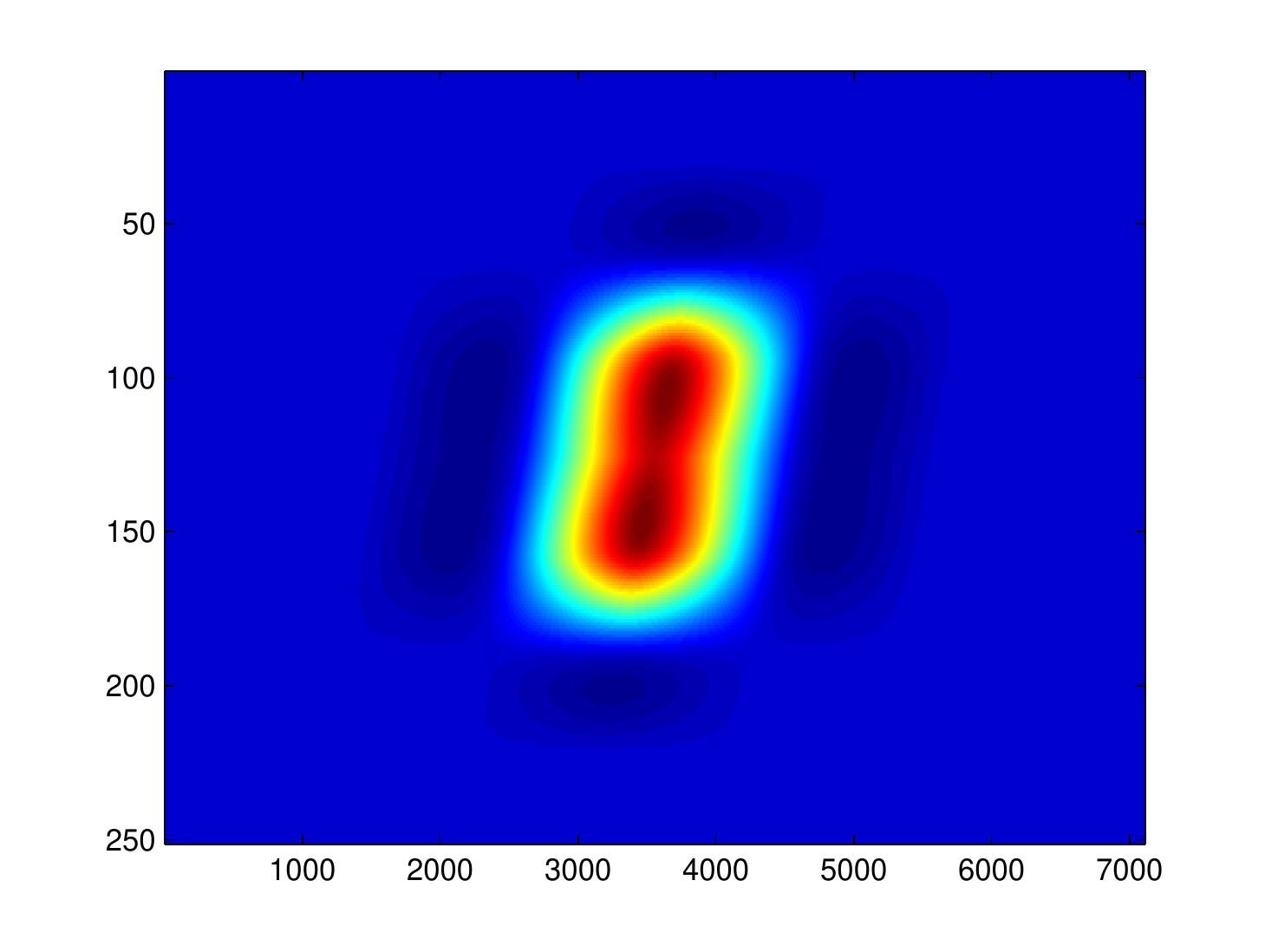}
    \else
    \includegraphics[width=5cm]{line2.eps}
    \fi}
  \put(225,130){(b)}
  \put(-5,10)%
  {\ifpdf
    \includegraphics[width=5cm]{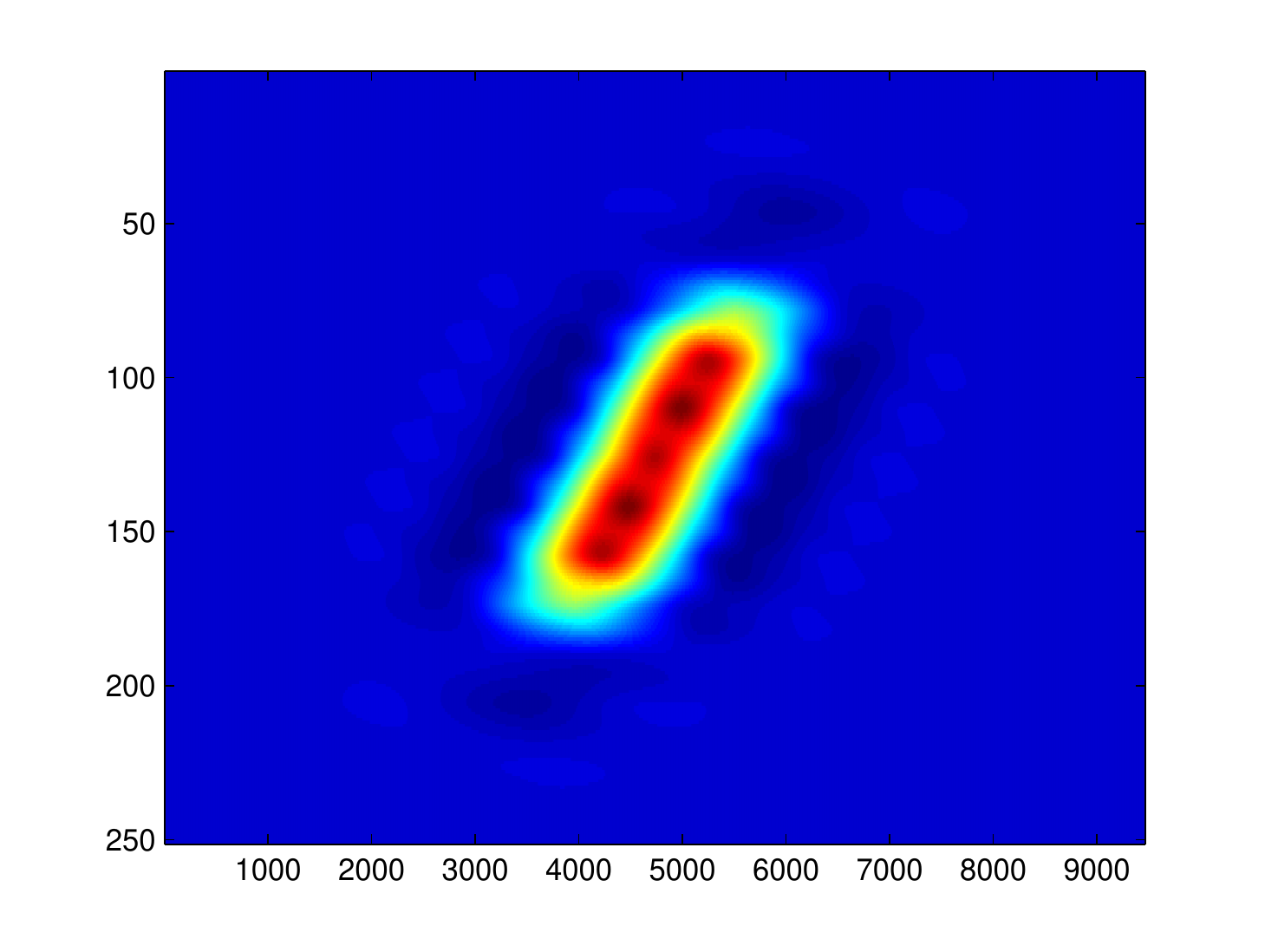}
    \else
    \includegraphics[width=5cm]{line3.eps}
  \fi}
  \put(60,0){(c)}
  \put(160,10)%
  {\ifpdf
    \includegraphics[width=5cm]{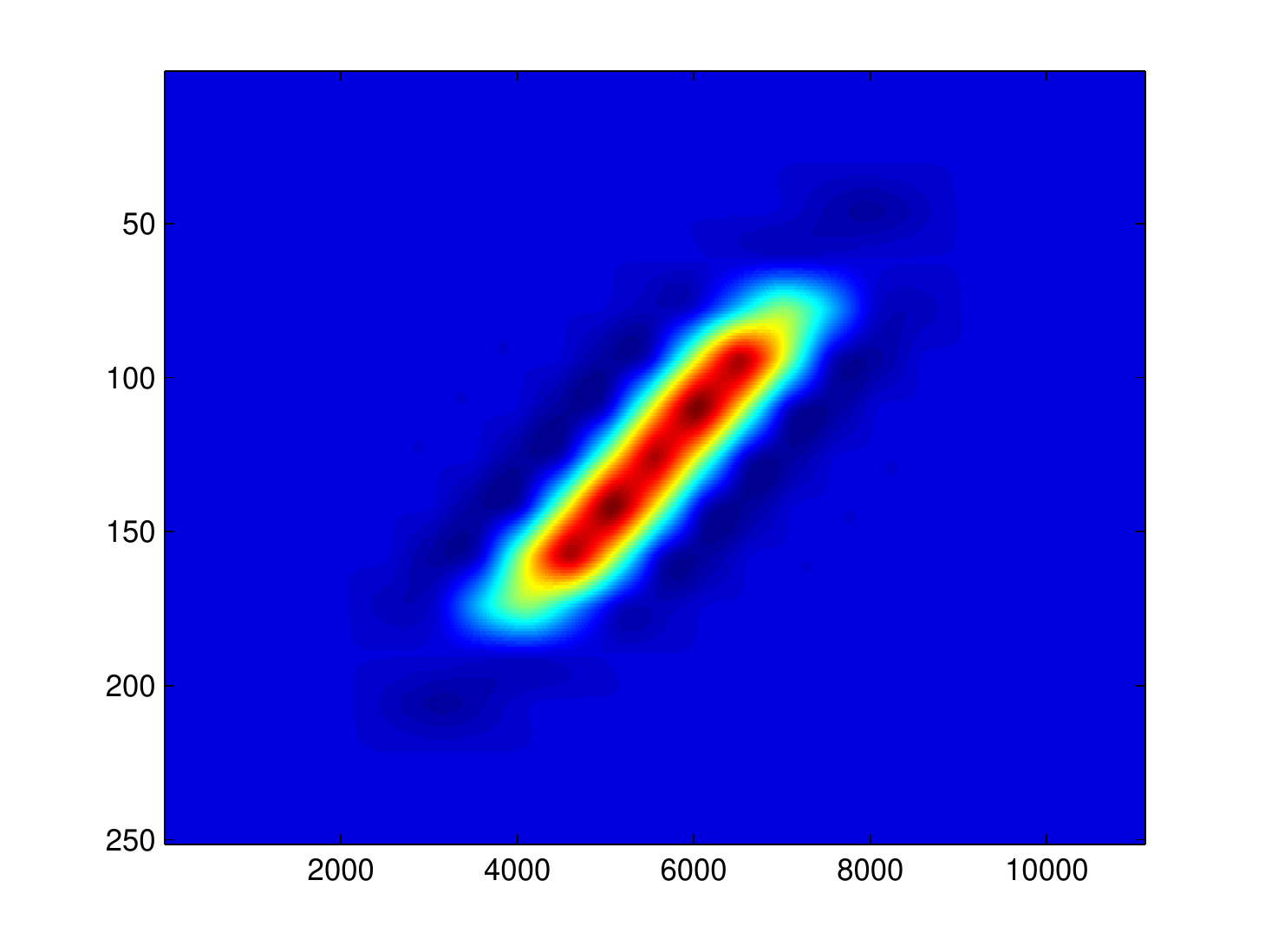}
    \else
    \includegraphics[width=5cm]{line4.eps}
    \fi}
  \put(225,0){(d)}
  \end{picture}
\end{center}
\caption{This figure shows the refinement of the matrix $C_1$
defined in \eqref{eq:C1} after applying $S_\eps$ with (a) $\eps =
(0,0,0,0,0)$,  (b) $\eps = (0,0,0,1,0)$, (c) $\eps = (0,1,0,0,0)$,
and (d) $\eps = (0,1,1,1,1)$.} \label{fig:line}
\end{figure}

and in Figure \ref{fig:cross} we subdivide the data given by
\begin{equation} \label{eq:C2}
C_2 = \begin{pmatrix}
     0 & \frac12 & 0 & 0 & 0\\
     0 & \frac12 & 0 & 0 & 0\\
     1 & 1 & 1 & 1 & 1\\
     0 & 0 & 0 & \frac12 & 0\\
     0 & 0 & 0 & \frac12 & 0
 \end{pmatrix}.
\end{equation}
In both figures we employ different iterations of the subdivision
schemes $S_0$ and $S_1$. As can clearly be seen, the application of
$S_1$ increases the angle the resulting images is sheared in the
$x$-direction, where the angle depends on the particular path in the
binary tree (see Figure \ref{fig:binary}) we choose.

\begin{figure}[h]
\begin{center}
\begin{picture}(300,250)(0,0)
\put(-5,140)%
{\ifpdf
\includegraphics[width=5cm]{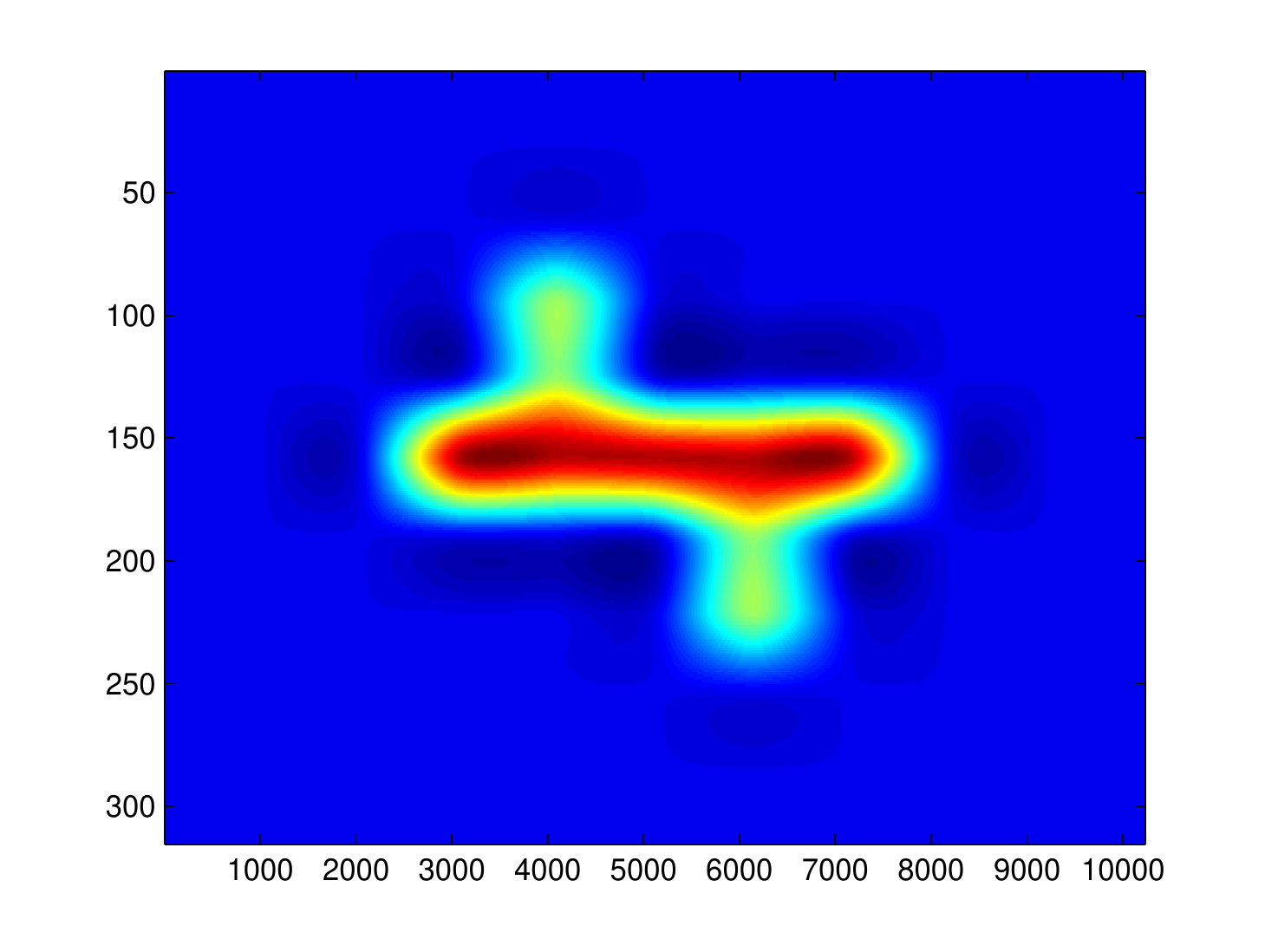}
\else
\includegraphics[width=5cm]{cross1.eps}
\fi}
  \put(60,130){(a)}
  \put(160,140)%
  {\ifpdf
    \includegraphics[width=5cm]{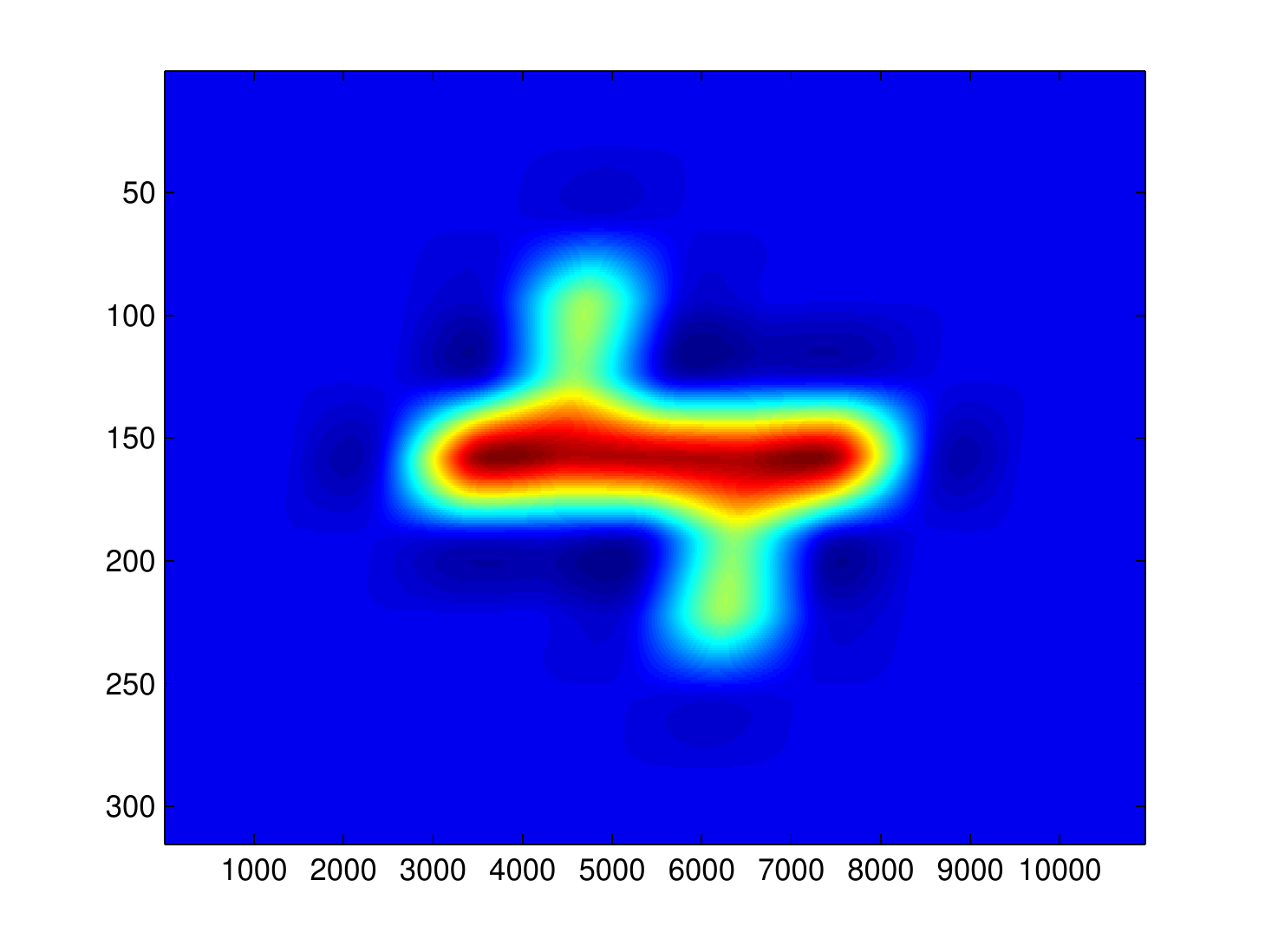}
    \else
    \includegraphics[width=5cm]{cross2.eps}
    \fi}
  \put(225,130){(b)}
  \put(-5,10)%
  {\ifpdf
    \includegraphics[width=5cm]{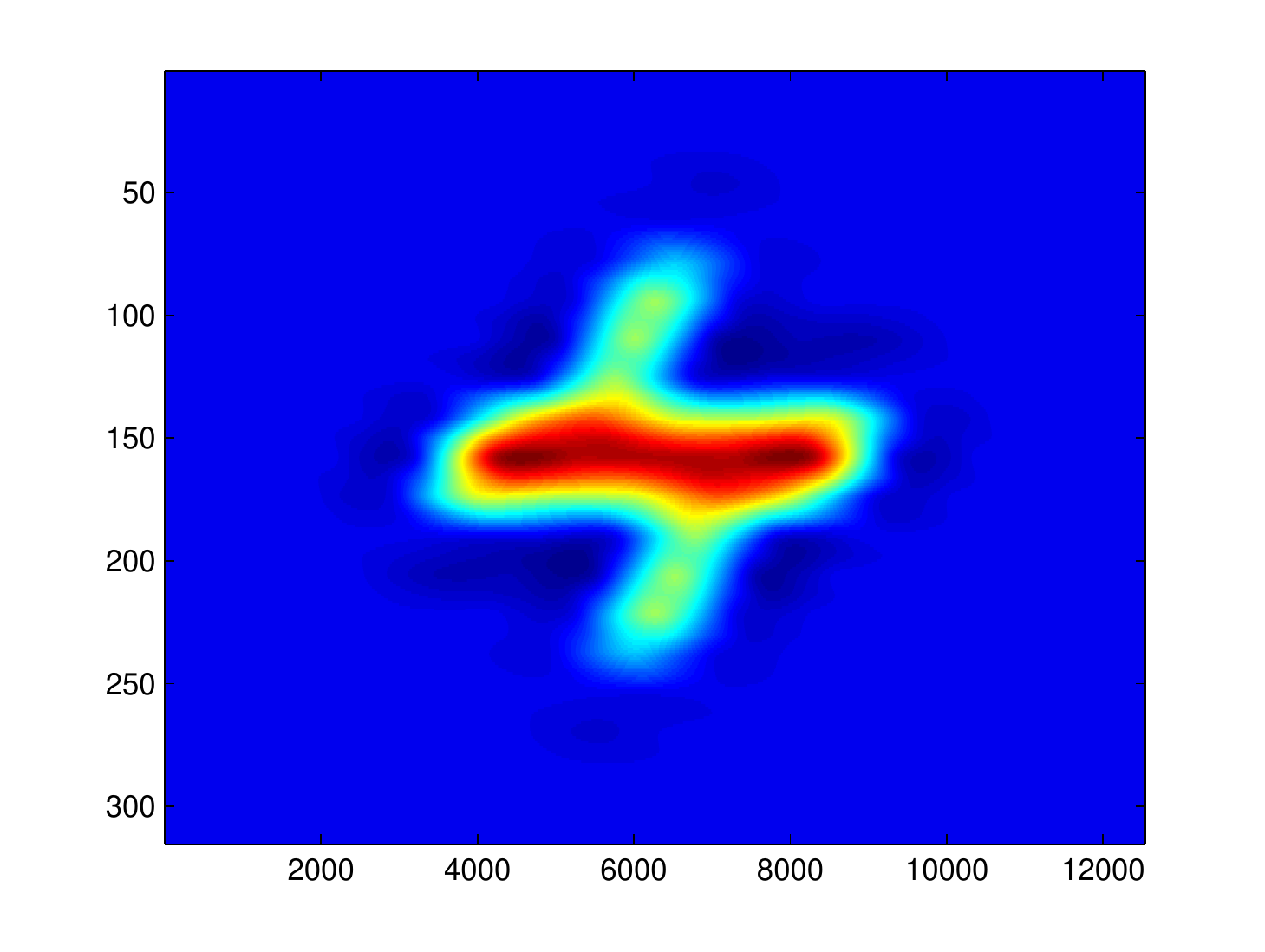}
    \else
    \includegraphics[width=5cm]{cross3.eps}
  \fi}
  \put(60,0){(c)}
  \put(160,10)%
  {\ifpdf
    \includegraphics[width=5cm]{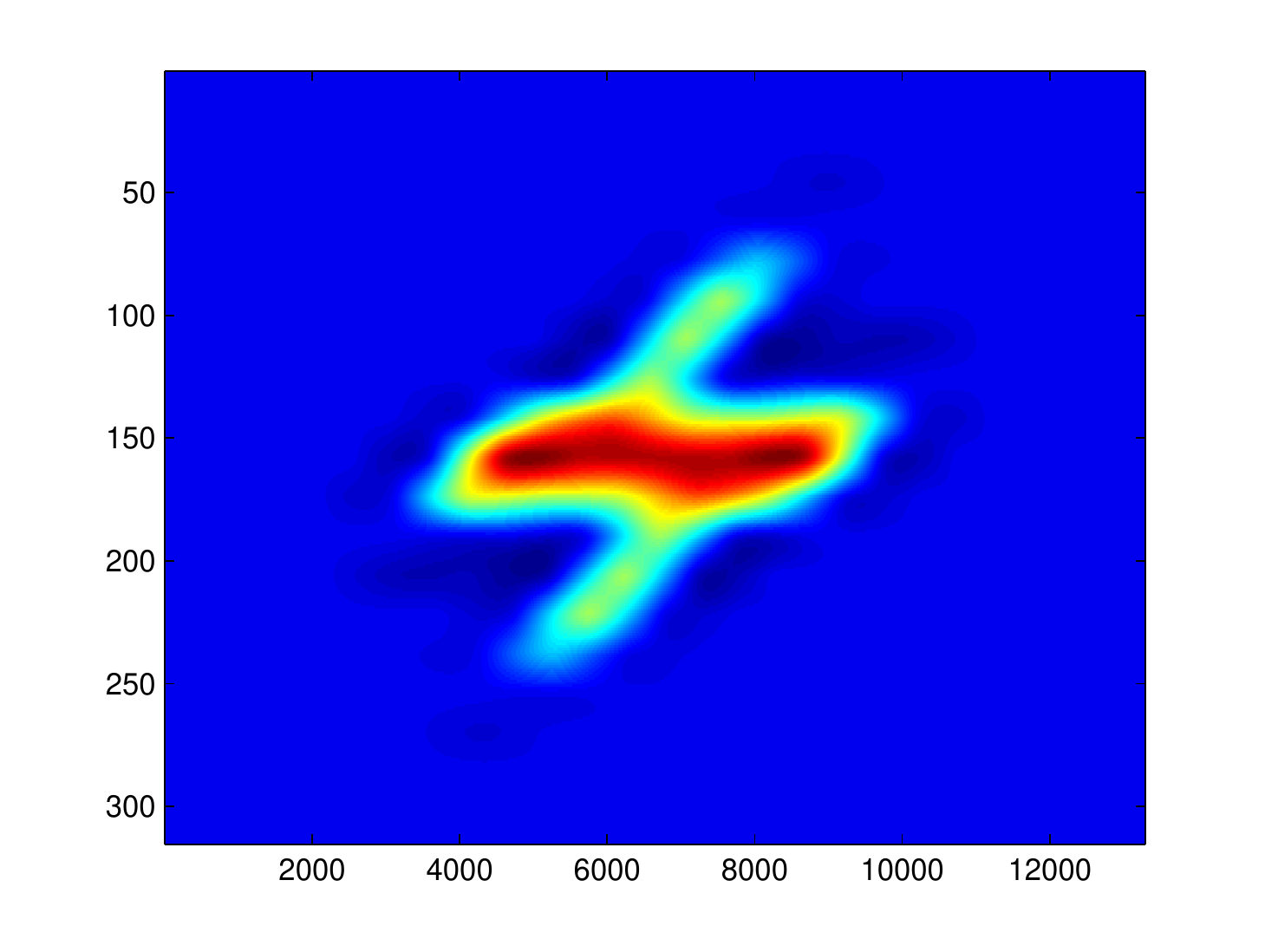}
    \else
    \includegraphics[width=5cm]{cross4.eps}
    \fi}
  \put(225,0){(d)}
  \end{picture}
\end{center}
\caption{This figure shows the refinement of the matrix $C_2$
defined in \eqref{eq:C2} after applying $S_\eps$ with (a) $\eps =
(0,0,0,0,0)$,  (b) $\eps = (0,0,0,1,0)$, (c) $\eps = (0,1,0,0,0)$,
and (d) $\eps = (0,1,1,1,1)$.} \label{fig:cross}
\end{figure}


\section{Shearlet Multiresolution Analysis}
\label{sec:shearletMRA} In this section we will show how the
adaptive directional subdivision schemes developed in the previous
sections can be applied to derive a shearlet multiresolution
analysis. For the sake of simplicity, in the computation of ``dual
functions'' we will restrict ourselves to interpolatory subdivision
schemes in this paper. Our idea is inspired by similar ideas for the
construction of a fast wavelet decomposition from interpolatory
subdivision schemes \cite{Don92}. The construction of a shearlet
multiresolution analysis associated with general adaptive
directional subdivision schemes is beyond the scope of this paper,
and will be studied in a forthcoming paper.


Before constructing the scaling spaces we first need to discuss
whether there exist masks $a_0$ and $a_1$ such that the subdivision
schemes $S_0$ and $S_1$ are both interpolatory, respectively, which
immediately implies that $S_\eps$ is interpolatory for each $\eps
\in E_\infty$. To that end, we proceed by using a tensor product
approach. Recall that a mask $a_0$ leads to an interpolatory
subdivision scheme $S_0$ provided that
\begin{equation}\label{eq:interpolatory0}
a_0(W_0\alpha) = \delta_{\alpha,0}\quad \mbox{for all }\alpha \in
\ZZ^2,
\end{equation}
likewise does a mask $a_1$ lead to an interpolatory subdivision
scheme $S_1$ provided that
\begin{equation}\label{eq:interpolatory1}
a_1(W_1\alpha) = \delta_{\alpha,0}\quad \mbox{for all }\alpha \in
\ZZ^2.
\end{equation}
There exists a canonical way to define $a_1$ by means of the matrix
$U$ as indicated by the following lemma (compare also Lemma
\ref{lemma:maskgeneration1}).

\begin{lemma}
\label{lemma:maskgeneration} Let $b_1, b_2 \in \ell(\ZZ)$ be masks
which satisfy $b_i(2m)=\delta_{m,0}$ for all $m \in \ZZ$, $i=1,2$
and let the mask $\tilde{b}_1$ be defined by $\tilde{b}_1(m) =
S_{b_1,2}b_1 (m)= \sum_{k \in \ZZ} b_1(k) b_1(m-2k)$. Then the mask $\tilde{b}_1
\otimes b_2$ satisfies \eqref{eq:interpolatory0}, and the mask
$(\tilde{b}_1 \otimes b_2)(U \, \cdot)$ satisfies
\eqref{eq:interpolatory1}.
\end{lemma}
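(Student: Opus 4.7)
The plan is to verify the two interpolation conditions \eqref{eq:interpolatory0} and \eqref{eq:interpolatory1} by direct calculation, exploiting the tensor product structure of $a_0$ and the algebraic identity $UW_1 = W_0 U$ recorded in \eqref{eq:UVProps}.

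For the first claim, I would evaluate
\[
a_0(W_0\alpha) = (\tilde{b}_1\otimes b_2)(4\alpha_1, 2\alpha_2) = \tilde{b}_1(4\alpha_1)\, b_2(2\alpha_2)
\]
and handle the two factors separately. The factor $b_2(2\alpha_2)$ equals $\delta_{\alpha_2,0}$ straight from the hypothesis on $b_2$. For the other factor, the key observation is that in the convolution $\tilde{b}_1(4\alpha_1) = \sum_{k\in\ZZ} b_1(k)\,b_1(4\alpha_1 - 2k)$, the argument $4\alpha_1-2k = 2(2\alpha_1-k)$ is always even, so the hypothesis on $b_1$ collapses it: $b_1(4\alpha_1-2k) = \delta_{k,2\alpha_1}$. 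Hence the sum reduces to $\tilde{b}_1(4\alpha_1) = b_1(2\alpha_1) = \delta_{\alpha_1,0}$, and multiplying the two factors gives $a_0(W_0\alpha) = \delta_{\alpha,0}$, as required.

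For the second claim, I would use the matrix identity $UW_1 = W_0 U$. This is immediate from \eqref{eq:UVProps}: since $W_1 = UW_0$ and $UW_0 = W_0V$, one has $UW_1 = U\cdot W_0V = W_0V\cdot V = W_0 V^2 = W_0 U$ (alternatively, it is a one-line matrix product). Consequently,
\[
a_1(W_1\alpha) = a_0(UW_1\alpha) = a_0(W_0\, U\alpha).
\]
Since $U$ has integer entries, $U\alpha\in\ZZ^2$, so the first part of the lemma applies and yields $\delta_{U\alpha,0}$. Because $U$ is unimodular, hence injective on $\ZZ^2$, $U\alpha = 0$ holds if and only if $\alpha=0$, whence $a_1(W_1\alpha) = \delta_{\alpha,0}$.

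There is no serious obstacle: the only point worth flagging is the parity observation that makes the convolution defining $\tilde{b}_1$ collapse to a single nonzero term at indices in $4\ZZ$, which is what lets the one-dimensional ``$2$-interpolatory'' hypothesis upgrade $\tilde{b}_1$ to a ``$4$-interpolatory'' mask suitable for $W_0$. The reduction of the $W_1$ case to the $W_0$ case via $UW_1 = W_0 U$ and the unimodularity of $U$ is then automatic.
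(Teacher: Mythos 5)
Your proof is correct and follows essentially the same route as the paper: the first claim is verified by exactly the same parity computation collapsing $\tilde{b}_1(4\alpha_1)=\sum_k b_1(k)b_1(2(2\alpha_1-k))$ to $b_1(2\alpha_1)=\delta_{\alpha_1,0}$. For the second claim the paper merely asserts ``a similar computation'' for $(\tilde b_1\otimes b_2)(UW_1\alpha)$, and your reduction via $UW_1=W_0U$ together with the unimodularity of $U$ is a clean, correct way of making that step explicit.
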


\begin{proof}
Given some $\alpha = (\alpha_1,\alpha_2) \in \ZZ^2$, we obtain
\[ (\tilde{b}_1 \otimes b_2)(W_0 \alpha)
= \sum_{k \in \ZZ} b_1(k) b_1(4\alpha_1-2k) \cdot \delta_{2\alpha_2,0}
= \sum_{k \in \ZZ} b_1(k) \delta_{2\alpha_1-k,0} \cdot \delta_{\alpha_2,0}
= \delta_{\alpha,0}.\]
A similar computation shows $(\tilde{b} \otimes b)(U W_1 \alpha) = \delta_{\alpha,0}$.
\end{proof}

\noindent Suppose we have chosen masks $a_0$ and $a_1$ so that the
subdivision scheme $S_\eps$ is interpolatory and converges for each
$\eps \in E_\infty$. To define the scaling functions, recall that we
wrote $ \eps^* = \left( \eps,0,0\dots \right) $ for the canonical
embedding of $E$ into $E_\infty$; the image of this embedding
operation,
\[
E^* = \left\{ \eps^* \;:\; \eps \in E \right\} \subset E_\infty
\]
thus consists of all infinite $0$-$1$--sequences which contain only
a finite number of nonzero components. It is worthwhile to keep in
mind that the subdivision scheme $S_\epsilon$ converges for all
$\eps \in E^*$ if and only if $a_0$ defines a convergent subdivision
scheme and hence the functions
\[
\left\{ f_\eps \;:\; \eps \in E^* \right\} = \left\{ f_{\eps^*}
\;:\; \eps \in
  E \right\}
\]
which will be needed to build the MRA can be ensured to exist by
requiring the existence of an appropriate solution of the refinement
equation associated to $a_0$. This is a much weaker condition, of
course, than convergence of the $S_\eps$ for any $\eps \in
E_\infty$.

\begin{definition}
  The \emph{shearlet scaling spaces} are defined as
  \[
  V_0 = \span \left\{ f_{\eps^*} \left(
      \cdot - \alpha \right) \;:\; \alpha \in \Z^2, \, \eps \in E \right\}
  \]
  and
\[
  V_n = \sum_{\eps \in \{0,1\}^n} V_\eps, \; n \ge 1,
  \]
where
  \[
  V_\eps = \span \left\{ f \left( W_\eps \cdot - \alpha \right)
    \;:\; \alpha \in \Z^2, \, f \in V_0 \right\} \quad \mbox{for all } \eps \in E.
  \]
\end{definition}

Indeed this choice of scaling spaces provides a multiresolution
analysis, which is the focus of the following theorem. The main
ingredient in the proof is -- as it should be -- the refinement
equation \eqref{eq:RefEq}.

\begin{theorem}\label{T:ShearletMRA}
  The spaces $(V_n)_{n \ge 0}$ create a multiresolution analysis. In particular,
  \begin{enumerate}
  \item the spaces $V_n$, $n \ge 0$ are translation invariant,
  \item $V_n \subseteq V_{n+1}$ for all $n \ge 0$, and
  \item for each $n \in \NN$, we have $f \in V_n \Leftrightarrow f(W_\eps \, \cdot) \in V_{n+1}$
  for each $\eps \in \{0,1\}$.
  \end{enumerate}
\end{theorem}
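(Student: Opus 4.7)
The plan is to verify the three properties in order, relying on the decomposition $V_n = \sum_{\eps \in \{0,1\}^n} V_\eps$ and the refinement equation from Theorem~\ref{T:RefEq}.

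For property (i), I would argue directly. $V_0$ is built from integer translates of the functions $f_{\eps^*}$, so it is trivially $\Z^2$-translation-invariant; for $V_\eps$, a translation by $\beta \in \Z^2$ maps the generator $g(W_\eps \cdot - \alpha)$ to $g(W_\eps \cdot - W_\eps \beta - \alpha)$, which is again a generator, since $W_\eps \in \Z^{2\times 2}$ forces $W_\eps \beta \in \Z^2$. Translation invariance then passes to the sum $V_n$.

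For property (ii), the idea is to refine each generator of $V_\eps$ once. A typical generator of $V_\eps$ has the form $f_{\mu^*}(W_\eps x - \alpha)$ for $\mu \in E$ and $\alpha \in \Z^2$, and Theorem~\ref{T:RefEq} yields
\[
  f_{\mu^*}(W_\eps x - \alpha) = \sum_\beta a_{\mu_1}(\beta)\, f_{\widehat{\mu^*}}\bigl(W_{\mu_1} W_\eps x - W_{\mu_1}\alpha - \beta\bigr).
\]
Because $W_{\mu_1} W_\eps = W_{(\eps, \mu_1)}$ with $(\eps, \mu_1) \in \{0,1\}^{n+1}$, and because $f_{\widehat{\mu^*}}(\cdot - \gamma) \in V_0$ for every $\gamma \in \Z^2$, each summand lies in $V_{(\eps, \mu_1)} \subseteq V_{n+1}$. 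The case of empty $\mu$ is handled by the self-refinement of $f_{(0,0,\dots)}$ under $a_0$.

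For property (iii), the forward direction is routine: writing $f = \sum_{\eps \in \{0,1\}^n} g_\eps$ with $g_\eps = \sum c\, h(W_\eps \cdot - \alpha)$ and composing with $W_\eta$ gives terms $h(W_\eps W_\eta x - \alpha)$, so the identity $W_\eps W_\eta = W_{(\eta, \eps)}$ places each summand in $V_{(\eta, \eps)} \subseteq V_{n+1}$.

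The backward direction of (iii) will be the main obstacle. Given $f(W_\eta \cdot) \in V_{n+1}$, one decomposes along $\eps' \in \{0,1\}^{n+1}$ and splits according to whether the first index of $\eps'$ equals $\eta$ or not. The ``matching'' pieces in $V_{(\eta, \tilde\eps)}$ pull back under $W_\eta^{-1}$ cleanly into $V_{\tilde\eps} \subseteq V_n$. The ``non-matching'' pieces in $V_{(1-\eta, \tilde\eps)}$ pull back through the unimodular shear $W_{1-\eta} W_\eta^{-1} = U^{\pm 1}$ from \eqref{eq:UVProps}, and this extra shear must be reabsorbed into $V_0$ by invoking the refinement equation together with the commutation identities $W_1 = UW_0 = W_0 V$. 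This absorption step, which relies on unimodularity of $U$ and on the precise structure of the generating set $\{f_{\mu^*} : \mu \in E\}$, is the technical heart of the argument.
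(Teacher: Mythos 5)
Your treatment of (i), (ii) and the forward implication of (iii) is essentially the paper's own argument: translation invariance is immediate from the definition, nestedness follows by applying the refinement equation \eqref{eq:RefEq} of Theorem~\ref{T:RefEq} to a generator $f_{\mu^*}\left(W_\eps\cdot-\alpha\right)$ together with $W_{\mu_1}W_\eps = W_{(\eps,\mu_1)}$, and the forward half of (iii) is the identity $W_\eps W_\eta = W_{(\eta,\eps)}$. Up to that point there is nothing to object to.

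The backward implication of (iii), however, is not proved in your proposal, and this is a genuine gap rather than a routine omission. You correctly isolate the problem: pulling a generator $f_{\mu^*}\left(W_{\eps'}\cdot-\alpha\right)$ of $V_{n+1}$ back by $W_\tau^{-1}$ is harmless when $\eps'_1=\tau$, but when $\eps'_1\neq\tau$ it produces $f_{\mu^*}\left(W_{\tilde\eps}\,U^{\pm1}\cdot-\alpha\right)$ with $\tilde\eps=(\eps'_2,\dots,\eps'_{n+1})$. Your claim that this shear can be ``reabsorbed into $V_0$ by invoking the refinement equation together with the commutation identities'' is exactly the statement that needs proof, and neither tool delivers it: the refinement equation moves you into $V_{n+1}$, not back inside $V_n$, and the identities \eqref{eq:UVProps} only relocate the shear without removing it (for instance $W_0^n U = U^{2^n} W_0^n$, so the shear escapes the range of admissible $U_{\eps'}$ at every level). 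Concretely, handling the mismatched generators in this way is equivalent to showing that $V_n$ (equivalently $V_0$, after commuting the shear through $W_{\tilde\eps}$) is invariant under $h\mapsto h\left(U^{j}\cdot\right)$ for suitable integers $j$, i.e.\ that $f_{\mu^*}\left(U^{j}\cdot-\beta\right)\in V_0$; unimodularity of $U$ alone does not give this, since the functions $f_{\mu^*}$ are sheared only at fine scales and there is no a priori reason a globally sheared $f_{\mu^*}$ is a finite combination of translates of the $f_{\nu^*}$. For what it is worth, the paper's own proof dismisses the converse with a one-line ``can be deduced in a similar way'', which in effect covers only the matching case $\eps'_1=\tau$ -- so you have put your finger on the real subtlety, but your proposal stops exactly where the work begins. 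To complete it you would have to prove the required shear-invariance of $V_0$ (perhaps exploiting the canonical choice $a_1=a_0(U\cdot)$ and the interpolatory structure), or find a different argument for the converse.
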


\begin{proof}
  Statement (i) follows immediately from the definition of $V_n$,
  which is a translational completion.

  To verify the nestedness property (ii), we consider an arbitrary
  ``basis element''  $f \in V_n$ of
  the form
  \begin{equation}\label{eq:basiselement}
    f = f_{\eta^*} \left( W_\eps \cdot - \alpha \right), \qquad \eps
    \in E_n, \quad \eta = \left( \eta_1,\widehat \eta \right) \in E, \quad \alpha \in \ZZ^2,
  \end{equation}
  and make use of the refinement equation (\ref{eq:RefEq}) to verify that
  \[
  f = \sum_{\beta \in \Z^2} a_{\eta_1} (\beta) \, f_{{\widehat \eta}^*}
  \left( W_{\eta_1} (W_\eps \cdot - \alpha) - \beta \right)
  = \sum_{\beta \in \Z^2} a_{\eta_1} (\beta - W_{\eta_1} \alpha) \, f_{{\widehat \eta}^*}
  \left( W_{\eps'} \cdot - \beta \right),
  \]
  with $\eps' = \left( \eps,\eta_1 \right) \in E_{n+1}$, hence
  $f \in V_{\eps'} \subseteq V_{n+1}$.

  To verify (iii) we again consider a function element $f \in V_n$
  of the form \eqref{eq:basiselement}. One implication follows from
  \[ f \left( W_\tau \, \cdot \, \right)
  = f_{\eta^*} \left( W_{(\tau,\eps)}\,\cdot - \alpha \right), \qquad
  \tau \in \{0,1\},
  \]
  the other one can be deduced in a similar way by considering
  $f \in V_{n+1}$ and showing that this yields $f \left( W_\tau^{-1} \cdot \,
  \right) \in V_n$ for any $\tau \in \{0,1\}$.
\end{proof}

Notice that for each fixed $\eps \in E$, the set of functions
$f_{\eps^*} \left(\cdot - \alpha \right)$, $\alpha \in \Z^2$, can be
interpreted as being derived from $\delta_\alpha$ by refining with
the subdivision scheme $S_{\eps}$. Since $S_{\eps}$ is
interpolatory, this set of functions is linearly independent.

Some of the scaling functions which generate $V_0$ are plotted in
Figure \ref{fig:delta}. The different orientations due to the
application of the adaptive directional subdivision scheme to the
Dirac delta $\delta_0$ is evident. This fact forces the associated
shearlet spaces to also comprise directionality, hence to react to
directional behavior of the data.

\begin{figure}[h]
\begin{center}
\begin{picture}(300,250)(0,0)
\put(-5,140)%
{\ifpdf
  \includegraphics[width=5cm]{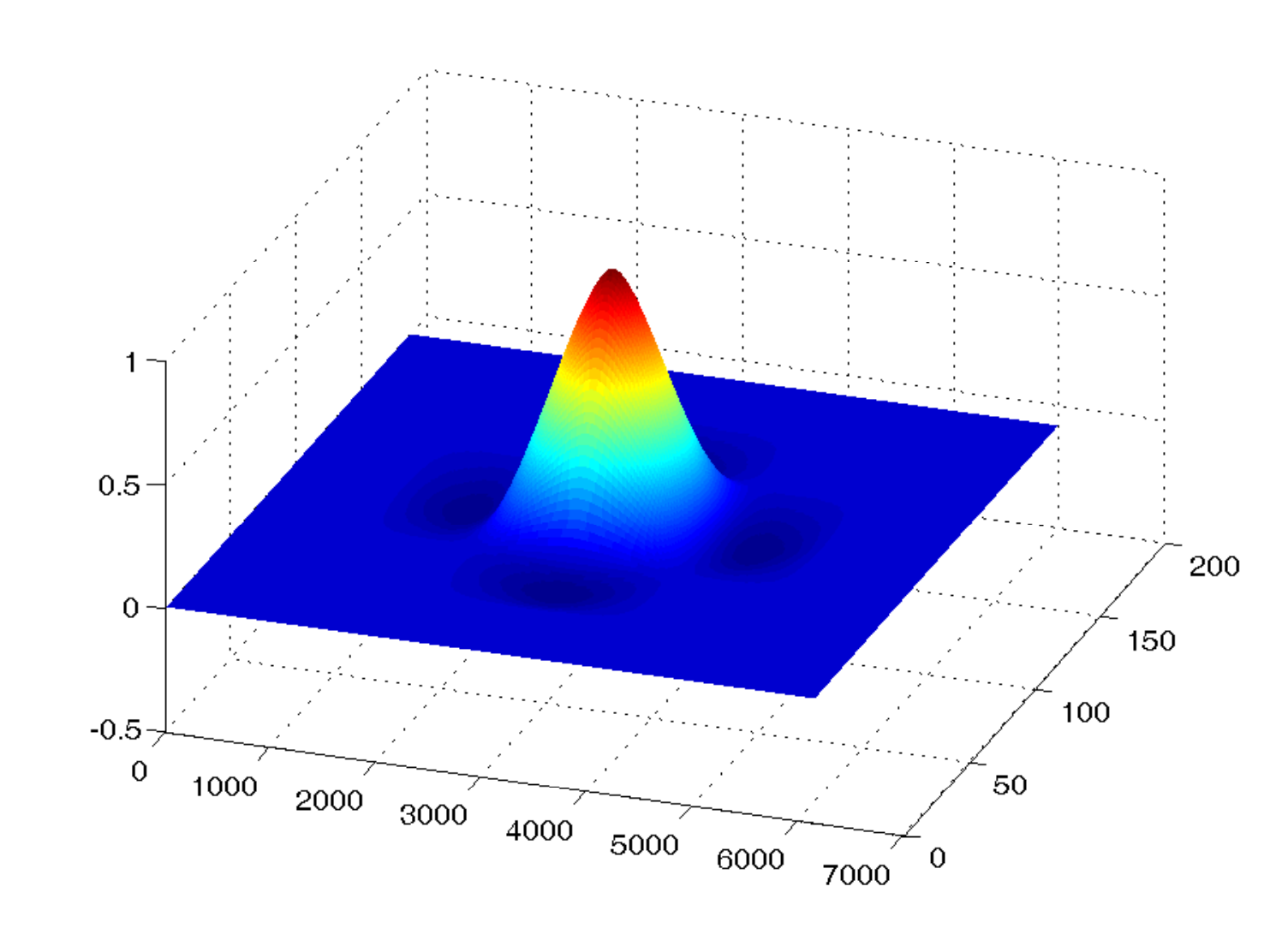}
  \else
  \includegraphics[width=5cm]{delta1.eps}
  \fi}
  \put(60,130){(a)}
  \put(160,140)%
  {\ifpdf
    \includegraphics[width=5cm]{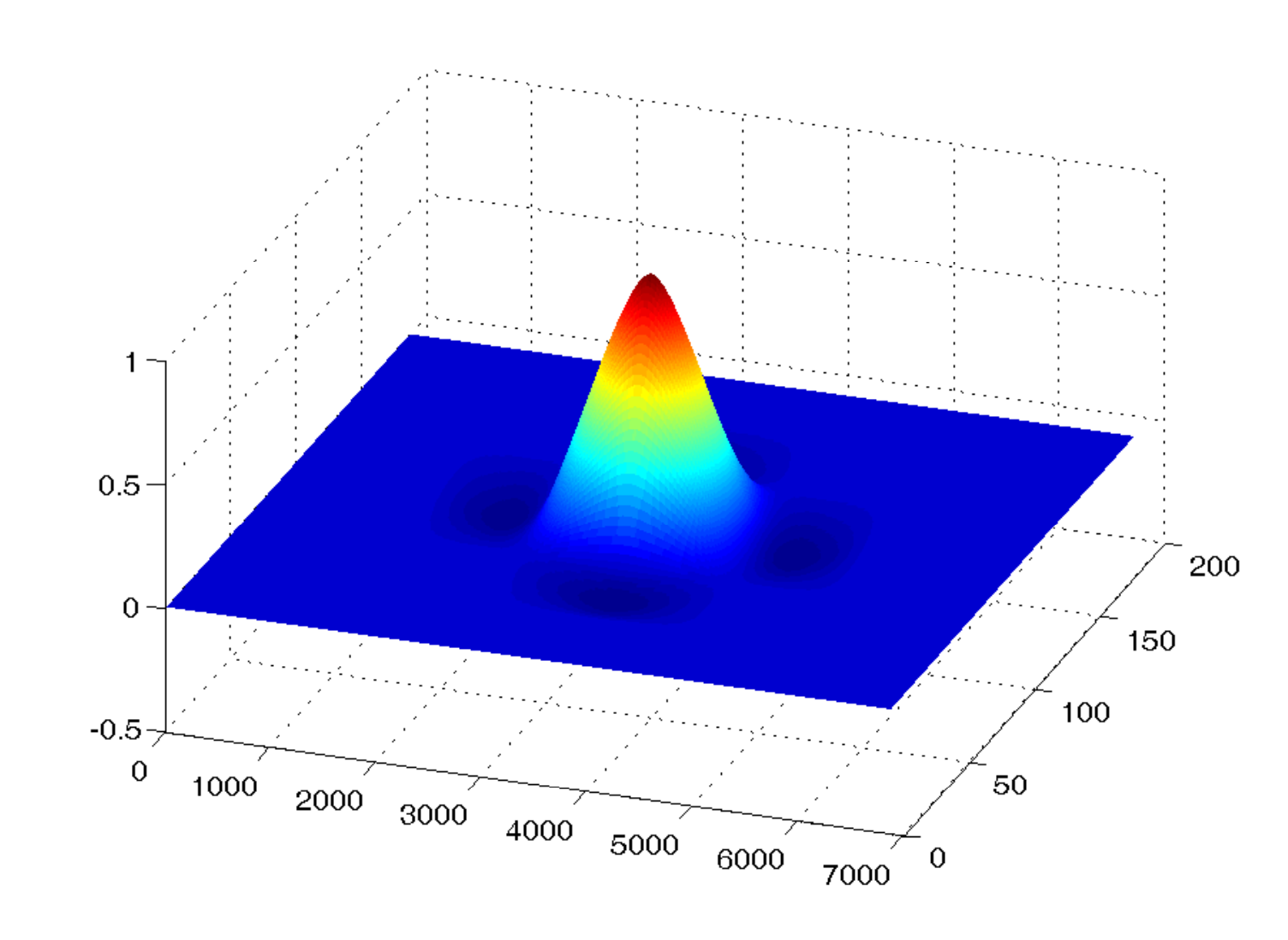}
    \else
    \includegraphics[width=5cm]{delta2.eps}
    \fi}
  \put(225,130){(b)}
  \put(-5,10)%
  {\ifpdf
    \includegraphics[width=5cm]{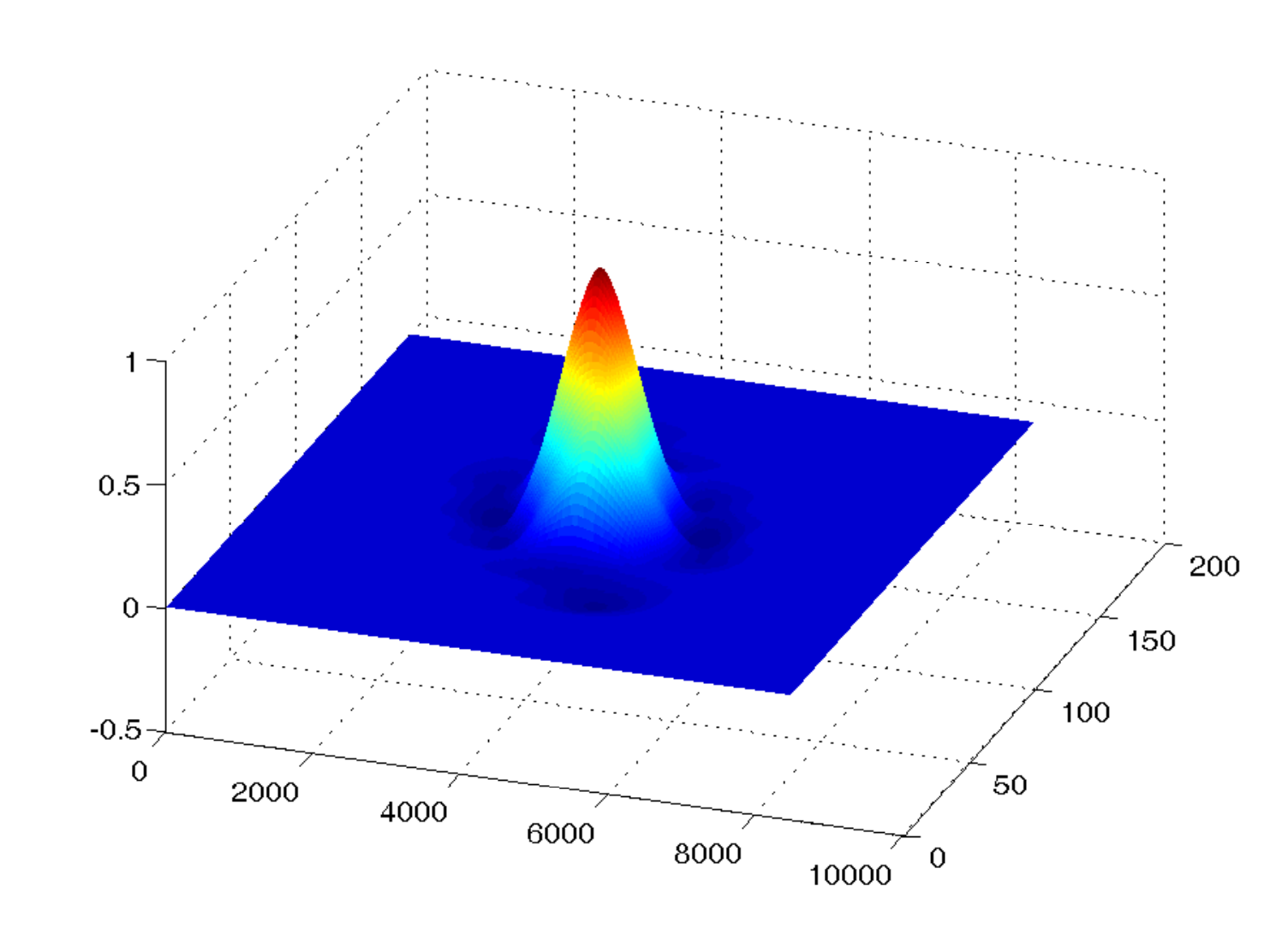}
    \else
    \includegraphics[width=5cm]{delta3.eps}
    \fi}
  \put(60,0){(c)}
  \put(160,10)%
  {\ifpdf
    \includegraphics[width=5cm]{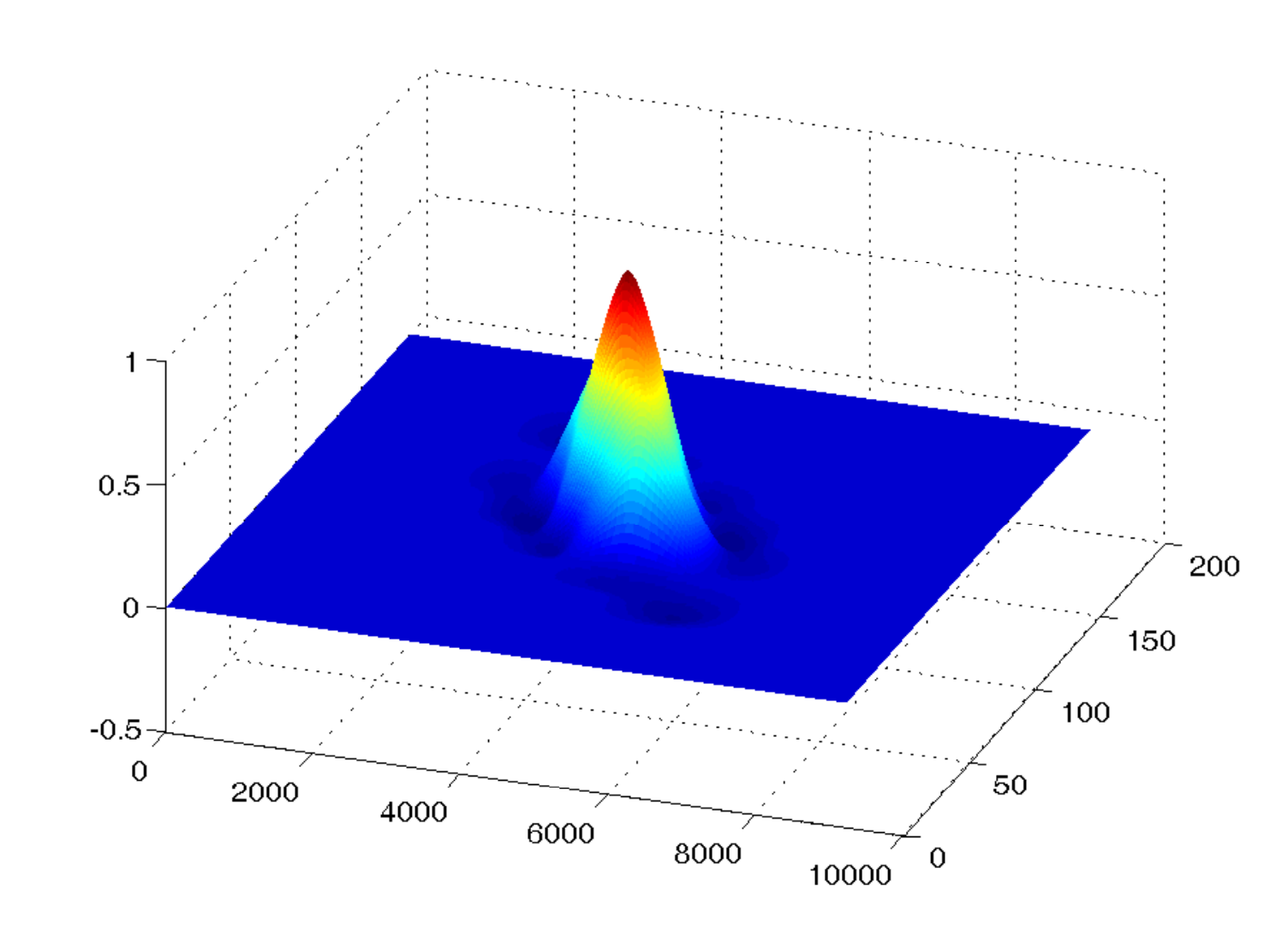}
    \else
    \includegraphics[width=5cm]{delta4.eps}
    \fi}
  \put(225,0){(d)}
  \end{picture}
\end{center}
\caption{This figure shows the refinement of $\delta_0$ after
applying $S_\eps$ with (a) $\eps = (0,0,0,0,0)$,  (b) $\eps =
(0,0,0,1,0)$, (c) $\eps = (0,1,0,0,0)$, and (d) $\eps =
(0,1,1,1,1)$.} \label{fig:delta}
\end{figure}


\section{Fast Shearlet Decomposition}
\label{sec:FSD} Let $\cP_n$, $n \in \NN_0$, denote a sequence of
projections from $V_{n+1}$ to $V_n$, respectively, and define the
\emph{shearlet spaces} as $H_n = \left( \cP_n - I \right) V_{n+1}$,
$n \in \NN_0$, hence as an appropriate complement of $V_n$ in
$V_{n+1}$. In classical MRA, $\cP$ is chosen as an orthogonal
projection, but following the approach from \cite{Faber09}, we can
also use interpolation as a projection, provided that the
subdivision schemes were interpolatory.

\subsection{Refinable Functions}

In order to establish the shearlet decomposition, we require the
following two observations.

\begin{lemma}
  \label{lem:help1}
  For all $\eps \in E$
  and $c \in \ell(\ZZ^2)$, we have
  \[\sum_{\alpha \in \Z^2} c\left( W_\eps^{-1} \alpha
  \right) \, f_0 \left( W_\eps \cdot - \alpha \right)
  = \sum_{\alpha \in \Z^2} c\left( W_0^{-n}  \alpha
  \right) \, f_0 \left(  U_\eps \left( W_0^n \cdot
      - \alpha \right) \right).\]
\end{lemma}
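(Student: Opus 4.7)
The plan is to combine the factorization $W_\eps = U_\eps W_0^n$ from Lemma~\ref{lemma:productsofW} with a change of summation variable that exploits the unimodularity of $U_\eps$. Since $U_\eps$ has integer entries and integer inverse, the map $\beta \mapsto U_\eps \beta$ is a bijection of $\Z^2$, so the substitution costs nothing in the summation range.

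Concretely, I would first rewrite the two expressions appearing in the summand on the left-hand side using this factorization: the argument of $f_0$ becomes
\[
W_\eps \cdot - \alpha = U_\eps W_0^n \cdot - \alpha,
\]
and the argument of $c$ becomes $W_\eps^{-1} \alpha = W_0^{-n} U_\eps^{-1} \alpha$. Substituting $\alpha = U_\eps \beta$ and renaming $\beta$ to $\alpha$ yields
\[
\sum_{\alpha \in \Z^2} c\bigl( W_0^{-n} \alpha \bigr) \, f_0 \bigl( U_\eps W_0^n \cdot - U_\eps \alpha \bigr)
= \sum_{\alpha \in \Z^2} c\bigl( W_0^{-n} \alpha \bigr) \, f_0 \bigl( U_\eps ( W_0^n \cdot - \alpha ) \bigr),
\]
which is exactly the claimed right-hand side.

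There is essentially no obstacle beyond making sure the substitution is legitimate, i.e.\ that $U_\eps$ and $U_\eps^{-1}$ both have integer entries so that $U_\eps$ permutes $\Z^2$. This was recorded immediately after Lemma~\ref{lemma:productsofW} (all $U_\eps$ are unimodular), hence the argument is a one-line change of variables.
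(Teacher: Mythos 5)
Your proof is correct and is essentially the paper's own argument: both use the factorization $W_\eps = U_\eps W_0^n$ from Lemma~\ref{lemma:productsofW} together with the unimodularity of $U_\eps$ to reindex the sum via $\alpha \mapsto U_\eps \alpha$. Nothing is missing.
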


\begin{proof}
  Since all the matrices $U_\eps$, $\eps \in E$, are
  unimodular, we obtain
\begin{eqnarray*}
  \sum_{\alpha \in \Z^2} c\left( W_\eps^{-1} \alpha
  \right) \, f_0 \left( W_\eps \cdot - \alpha \right)
  & = & \sum_{\alpha \in \Z^2} c\left( W_0^{-n} U_\eps^{-1} \alpha
  \right) \, f_0 \left( W_\eps \cdot - \alpha \right)\\
  & = & \sum_{\alpha \in \Z^2} c\left( W_0^{-n}  \alpha
  \right) \, f_0 \left(  U_\eps \left( U_\eps^{-1} W_\eps \cdot
      - \alpha \right) \right)\\
  & = & \sum_{\alpha \in \Z^2} c\left( W_0^{-n}  \alpha
  \right) \, f_0 \left(  U_\eps \left( W_0^n \cdot
      - \alpha \right) \right).\qedhere
\end{eqnarray*}
\end{proof}

\noindent To formulate the next result, we denote by $r : E \to E$
the \emph{reversal} operator for sequences, which maps $\eps =
\left( \eps_1,\dots,\eps_n \right)$ to $r(\eps) := r\left(
\eps_1,\dots,\eps_n \right) := \left( \eps_n,\dots,\eps_1 \right)$.
Moreover, we will write $0_k = P_k 0^*$ for the zero sequence in
$E_k$, $k \in \NN$. We can now derive the following crucial
relationship between refinable functions and subdivision.

\begin{lemma}
  \label{lem:help2}
  For $0 \le k \le n$, $\eps = (\eta,\tau) \in E$, $\eta \in E_k$ and
  $c \in \ell(\ZZ^2)$, we have
  \begin{eqnarray*}
    \sum_{\alpha \in \ZZ^2} c(\alpha) \, f_{\eps^*} \left( W_0^{n-k} \cdot -
      \alpha \right)
    & = & \sum_{\alpha \in \ZZ^2} S_\eta c(\alpha) f_{\widehat{\tau}^*} \left( W_{r(\eta)}
      W_0^{n-k} \cdot - \alpha \right).
  \end{eqnarray*}
\end{lemma}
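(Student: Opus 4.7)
The plan is to use the refinement equation \eqref{eq:RefEq} iteratively to ``peel off'' the first $k$ entries of $\eps^*$, and then to recognize the resulting double sum as a single application of the iterated subdivision operator $S_\eta$.

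As an intermediate step, I would first establish by induction on $k$ the formula
\[
f_{\eps^*}(x) \;=\; \sum_{\gamma \in \ZZ^2} a_\eta(\gamma)\, f_{\tau^*}\!\left( W_\eta\, x - \gamma \right),
\]
where $a_\eta$ and $W_\eta$ are the iterated mask and matrix from Lemma~\ref{lemma:IterMasks}. The base case $k=1$ is exactly the refinement equation of Theorem~\ref{T:RefEq}. For the inductive step, applying the refinement equation once more to $f_{\tau^*}$ introduces an extra mask $a_{\eps_{k+1}}$ and matrix $W_{\eps_{k+1}}$; the change-of-summation variable used in the proof of Lemma~\ref{lemma:IterMasks} (namely, absorbing $W_{\eps_{k+1}}$ into the shift) then collapses the two inner sums into the single $(k{+}1)$-fold iterated mask $a_{P_{k+1}\eps^*}=S_{\eps_{k+1}}a_\eta$ and matrix $W_{P_{k+1}\eps^*}=W_{\eps_{k+1}}W_\eta$, which advances the induction.

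Substituting this expansion into the left-hand side of the lemma, with $x = W_0^{n-k}\cdot - \alpha$, converts the LHS into the double sum
\[
\sum_{\alpha,\gamma \in \ZZ^2} c(\alpha)\, a_\eta(\gamma)\, f_{\tau^*}\!\left( W_\eta W_0^{n-k}\cdot - W_\eta\alpha - \gamma \right).
\]
For each fixed $\alpha$, the map $\gamma \mapsto \beta := W_\eta\alpha + \gamma$ is a bijection of $\ZZ^2$, so swapping the order of summation recasts the bracketed inner sum as exactly
\[
(S_\eta c)(\beta) \;=\; \sum_{\alpha \in \ZZ^2} a_\eta(\beta - W_\eta\alpha)\, c(\alpha),
\]
which is the very definition of $S_\eta c$ given in Lemma~\ref{lemma:IterMasks}. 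The whole expression therefore collapses to the claimed right-hand side.

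Every step is a routine algebraic manipulation and I do not anticipate any real obstacle. The only care required is to keep track of the order in which masks and dilation matrices accumulate during the $k$-fold iteration of the refinement equation, and to verify that the change of summation variable preserves absolute convergence --- which is automatic since $c$ is arbitrary but $a_\eta$ has finite support by Lemma~\ref{lemma:IterMasks}.
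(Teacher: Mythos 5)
Your overall route is the same as the paper's: peel off the refinement equation \eqref{eq:RefEq} $k$ times --- equivalently, prove the iterated refinement equation $f_{\eps^*}=\sum_\gamma a_\eta(\gamma)\,f_{\tau^*}(W_\eta\cdot-\gamma)$ --- and then absorb the shift $W_\eta\alpha$ into the summation variable so that $S_\eta c$ appears via Lemma~\ref{lemma:IterMasks}. Up to that point your computation is correct (and the discrepancy $f_{\tau^*}$ versus the statement's $f_{\widehat\tau^*}$ is evidently just a typo in the paper). The gap is in your last sentence. What your derivation produces in the argument of $f_{\tau^*}$ is the matrix $W_\eta W_0^{n-k}=W_{\eta_k}\cdots W_{\eta_1}W_0^{n-k}$, whereas the stated right-hand side has $W_{r(\eta)}W_0^{n-k}=W_{\eta_1}\cdots W_{\eta_k}W_0^{n-k}$, with the factors in the \emph{reversed} order. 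Since $W_0$ and $W_1$ do not commute (the upper-right entry of $W_0W_1$ is $-16$, that of $W_1W_0$ is $-8$), these two products differ already for $\eta=(0,1)$, so ``collapses to the claimed right-hand side'' is precisely the step that is not justified --- and it is exactly the order-of-accumulation bookkeeping that you flagged as the ``only care required'' and then did not carry out against the statement.

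To be fair, the paper's own proof has the same soft spot: its induction step replaces $W_{\eps_{j+1}}W_{r(\eps_1,\dots,\eps_j,0_{n-k})}$ by $W_{r(\eps_1,\dots,\eps_{j+1},0_{n-k})}$, which again silently commutes $W_{\eps_{j+1}}$ past $W_{\eps_1}\cdots W_{\eps_j}$; the subsequent detour through the unimodular matrices $U_\eps$ does not repair this. Your careful version therefore shows that what actually follows from \eqref{eq:RefEq} and Lemma~\ref{lemma:IterMasks} is the identity with $W_\eta W_0^{n-k}$ on the right-hand side, i.e.\ the reversal operator $r$ should not appear there (or the conventions $S_\eps=S_{\eps_n}\cdots S_{\eps_1}$, $W_\eps=W_{\eps_n}\cdots W_{\eps_1}$ and the form of \eqref{eq:RefEq} would have to be changed). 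So, judged as a blind proof of the lemma as stated, your proposal is incomplete at the final identification; judged as a derivation, it is sound and in fact pinpoints where the statement and the published argument need to be corrected. (Your closing remark about convergence is a non-issue, as $a_\eta$ is finitely supported.)
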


\begin{proof}
Without loss of generality we can assume that $\tau = (0)$.
  Then, for $\eps = \left( \eps_1,\widehat \eps \right)$,
  the refinement equation \eqref{eq:RefEq} gives
  {\allowdisplaybreaks
  \begin{eqnarray*}
    \lefteqn{
      \sum_{\alpha \in \ZZ^2} c(\alpha) \, f_{\eps^*} \left( W_0^{n-k} \cdot -
        \alpha \right) } \\
    & = & \sum_{\alpha \in \ZZ^2} c(\alpha) \, \sum_{\beta \in \ZZ^2}
    a_{\eps_1} \left( \beta \right)
    f_{\widehat{\eps^*}} \left( W_{\eps_1} \left( W_0^{n-k} \cdot - \alpha
      \right) - \beta \right) \\
    & = & \sum_{\alpha,\beta \in \ZZ^2} a_{\eps_1} \left( \beta - W_{\eps_1}
      \alpha \right) \, c(\alpha) \,
    f_{\widehat{\eps^*}} \left( W_{r \left(\eps_1,0_{n-k} \right)}
      \cdot - \beta \right)
    \\
    & = & \sum_{\beta \in \ZZ^2} \left( S_{\eps_1} c \right)
    \left( \beta \right)
    \, f_{\widehat{\eps^*}} \left( W_{r \left(\eps_1,0_{n-k} \right)}
      \cdot - \beta \right) \\
  \end{eqnarray*}}
  This is the initial step for the inductive proof that for $j \le k$ we have
  \begin{eqnarray}
    \label{eq:lemhelp2pf1}
    \lefteqn{\sum_{\alpha \in \ZZ^2} c(\alpha) \, f_{\eps^*} \left( W_0^{n-k} \cdot -
      \alpha \right)}\\ \nonumber
      & = & \sum_{\beta \in \ZZ^2} S_{\left( \eps_1,\dots,
          \eps_j \right)} c
    \left( \beta \right)
    \, f_{\left( \eps_{j+1},\dots,\eps_k \right)^*}
    \left( W_{r \left(\eps_1,\dots,\eps_j,0_{n-k}\right)} \cdot - \beta
    \right).
  \end{eqnarray}
  Indeed, applying the refinement equation \eqref{eq:RefEq} once more to
  \eqref{eq:lemhelp2pf1}, we get that
  \begin{eqnarray*}
    \lefteqn{
      \sum_{\alpha \in \ZZ^2} c(\alpha) \, f_{\eps^*} \left( W_0^{n-k} \cdot -
        \alpha \right) } \\
    & = & \hspace*{-0.3cm} \sum_{\alpha, \beta \in \ZZ^2} S_{\left( \eps_1,\dots,
        \eps_j \right)} c ( \beta )
    a_{\eps_{j+1}} (\alpha)
    \, f_{\left( \eps_{j+2},\dots,\eps_k \right)^*}
    \left( W_{\eps_{j+1}} \left( W_{r \left(\eps_1,\dots,\eps_j,0_{n-k}
          \right)} \cdot - \beta \right) - \alpha \right) \\
    & = & \hspace*{-0.3cm} \sum_{\alpha, \beta \in \ZZ^2} a_{\eps_{j+1}}
    \left( \alpha - W_{\eps_{j+1}} \beta \right) \, S_{\left( \eps_1,\dots,
        \eps_j \right)} c \left( \beta \right) \, f_{\left( \eps_{j+2},\dots,
        \eps_k \right)^*} \, \left( W_{r \left(\eps_1,\dots,\eps_{j+1},0_{n-k}
        \right)} \cdot - \alpha \right)
  \end{eqnarray*}
  which advances the induction hypothesis in \eqref{eq:lemhelp2pf1}.
  Specifically, for $j = k$ this identity gives
  \begin{eqnarray*}
      \sum_{\alpha \in \ZZ^2} c(\alpha) \, f_{\eps^*} \left( W_0^{n-k} \cdot -
        \alpha \right)
     & = & \sum_{\beta \in \ZZ^2} S_\eps c (\beta) \, f_0 \left(
        W_{r \left(\eps,0_{n-k} \right)} \cdot - \beta \right)  \\
    & = & \sum_{\beta \in \ZZ^2} S_\eps c (\beta) \, f_0 \left( U_{r \left(
          \eps,0_{n-k} \right)}
      W_0^n \cdot - \beta \right)\\
    & = & \sum_{\beta \in \ZZ^2} S_\eps c \left( U_{r \left( \eps,0_{n-k} \right)}
      \beta \right) \, f_0 \left( U_{r \left( \eps, 0_{n-k} \right)}
      \left( W_0^n \cdot - \beta \right) \right).
  \end{eqnarray*}
  Since for any $\eta \in E_k$
  \begin{eqnarray*}
  -2^{n+1} \left[ r \left( \eta,0_{n-k} \right) \right]_2
  & = & -2^{n+1} \, 2^{-n+k} \sum_{j=1}^k \eta_{k-j} 2^{-j}\\
  & = & -2^{k+1} \sum_{j=1}^k r(\eta)_j 2^{-j}\\
  & = & -2^{k+1} \left[ r(\eta) \right]_2,
  \end{eqnarray*}
  we finally get the identity
\begin{eqnarray*}
  \sum_{\alpha \in \ZZ^2} c(\alpha) \, f_{\eps^*} \left( W_0^{n-k} \cdot -
    \alpha \right)
 & = & \sum_{\beta \in \ZZ^2} S_\eps c \left( U_{r \left( \eps \right)}
    \beta \right) \, f_0 \left( U_{r \left( \eps \right)}
    \left( W_0^n \cdot - \beta \right) \right)\\
& = & \sum_{\alpha \in \ZZ^2} S_\eps c(\alpha) f_0 \left( W_{r(\eps)}
      W_0^{n-k} \cdot - \alpha \right),
  \end{eqnarray*}
  which proves the claim.
\end{proof}

Now suppose we are given some data from a finely sampled function on
the grid $W_0^{-n} \ZZ = 4^{-n} \ZZ \times 2^{-n} \ZZ$, say. The key
idea for the decomposition of this data, dependent on different
directions, is stated in the following result which is the backbone
of the MRA based fast discrete shearlet decomposition. We would like
to mention that it relies on the fact that the  masks $a_0$ and
$a_1$ are chosen to be interpolatory and thus give us an explicit
expression for $\cP_n - I$.

The wavelet part of such a decomposition is, as usual, related to
the representatives of the quotient groups $\Gamma_\eps := \ZZ^2 /
W_{r(\eps)} \ZZ^2$, $\eps \in E$. Since for $\eps \in E_n$ we have
$\det W_0 = \det W_1 = 8^n$, all such quotient groups consist of a
number of elements that depends only on the length of $\eps$; we
will denote by $\Gamma_\eps^*$ a selection of $8^n-1$
representatives for $\Gamma_\eps \setminus \{ [0] \}$. In the
sequel, we will make use of the notation $D_M c = c(M\cdot)$, $M$
being some 2$\times$2-matrix.

\begin{theorem}
  \label{theo:decomposition}
  For $c \in \ell(\ZZ^2)$, $\eps = \left( \eta,\tau \right) \in E$, $\eta \in
  E_k$ and $n \ge k$ we have that
  \begin{eqnarray}
    \nonumber
    \lefteqn{ \sum_{\alpha \in \ZZ^2} c( \alpha ) \,
      f_{\tau^*} \left( W_{r(\eta)} W_0^{n-k} \cdot -
        \alpha \right)
      = \sum_{\alpha \in \ZZ^2} c \left( W_{r(\eta)} \alpha \right) \, f_{\eps^*}
      \left( W_0^{n-k} \cdot -
        \alpha \right) } \\
    \label{eq:DecompForm}
    & & + \sum_{\gamma \in \Gamma_{\eta}^*} \sum_{\alpha \in \ZZ^2}
    \left( c - S_\eta D_{W_{r(\eta)}} c \right) \left( W_{r(\eta)} \alpha +
        \gamma \right) \, f_{\tau^*} \left( W_{r(\eta)} \left( W_0^{n-k} \cdot -
        \alpha \right) - \gamma \right).
\end{eqnarray}
\end{theorem}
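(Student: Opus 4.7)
The plan is to produce the coarse approximation term in \eqref{eq:DecompForm} by applying Lemma~\ref{lem:help2} to the decimated data, and then to split the resulting residual into cosets modulo $W_{r(\eta)} \ZZ^2$, eliminating the principal coset via interpolatoriness of the composed scheme $S_\eta$.

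Concretely, I would substitute the decimated sequence $D_{W_{r(\eta)}} c$ (that is, the sequence $\alpha \mapsto c(W_{r(\eta)} \alpha)$) in place of $c$ in Lemma~\ref{lem:help2}. This yields
\[
\sum_\alpha c\big(W_{r(\eta)}\alpha\big)\, f_{\eps^*}\!\left(W_0^{n-k}\cdot - \alpha\right) = \sum_\alpha \big(S_\eta D_{W_{r(\eta)}} c\big)(\alpha)\, f_{\tau^*}\!\left(W_{r(\eta)} W_0^{n-k}\cdot - \alpha\right),
\]
which is precisely the coarse term on the RHS of \eqref{eq:DecompForm}, reexpressed through $f_{\tau^*}$. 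Subtracting this from the LHS of the theorem leaves the residual $\sum_\alpha \big(c - S_\eta D_{W_{r(\eta)}} c\big)(\alpha)\, f_{\tau^*}\!\left(W_{r(\eta)} W_0^{n-k}\cdot - \alpha\right)$.

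Next, I would decompose $\ZZ^2$ into the $8^k$ cosets of $W_{r(\eta)}\ZZ^2$: every $\alpha \in \ZZ^2$ admits a unique expression $\alpha = W_{r(\eta)}\alpha' + \gamma$ with $\gamma \in \Gamma_\eta$ and $\alpha' \in \ZZ^2$. Reindexing the residual accordingly yields exactly the double sum appearing in \eqref{eq:DecompForm}, except that initially it runs over all of $\Gamma_\eta$ rather than only over $\Gamma_\eta^* = \Gamma_\eta \setminus \{[0]\}$.

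The final step is to show that the $\gamma = 0$ coset contributes nothing. Since $S_0$ and $S_1$ are interpolatory with respect to $W_0$ and $W_1$ by \eqref{eq:interpolatory0} and \eqref{eq:interpolatory1}, an iteration through the cascade $S_\eta = S_{\eta_k}\cdots S_{\eta_1}$ gives $(S_\eta d)\big(W_{r(\eta)}\alpha'\big) = d(\alpha')$ for every $d \in \ell(\ZZ^2)$. Taking $d = D_{W_{r(\eta)}} c$ produces $(S_\eta D_{W_{r(\eta)}} c)\big(W_{r(\eta)}\alpha'\big) = c\big(W_{r(\eta)}\alpha'\big)$, so the $\gamma = 0$ summand vanishes identically and the outer sum restricts to $\Gamma_\eta^*$, giving the detail part of \eqref{eq:DecompForm}. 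The main technical obstacle is precisely this last step---tracking how interpolatoriness of the individual schemes combines into reproduction on the coarse lattice $W_{r(\eta)}\ZZ^2$ used in Lemma~\ref{lem:help2}; this relies on exactly the same inductive bookkeeping employed in the proof of that lemma.
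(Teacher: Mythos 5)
Your argument is essentially the paper's own proof: you apply Lemma~\ref{lem:help2} to the decimated sequence $D_{W_{r(\eta)}}c$, split the remaining part into cosets of $W_{r(\eta)}\ZZ^2$, and remove the zero coset by interpolatoriness, the only difference being that you subtract first and then split, whereas the paper splits the predicted term over $\Gamma_\eta$ and compares it with the coset decomposition of the left-hand side --- the same computation in a different order. The one step you flag as delicate, namely that the cascaded scheme $S_\eta$ reproduces data on the lattice $W_{r(\eta)}\ZZ^2$ appearing in Lemma~\ref{lem:help2}, is treated in the paper with exactly the same brevity (``since the subdivision schemes were supposed to be interpolatory''), so your write-up matches the paper's proof both in structure and in its level of detail at that point.
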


\begin{proof}
  The decomposition is based on the prediction--correction method which
  has become standard for interpolation based wavelet decomposition, in
  particular in connection with the so--called ``lazy wavelet'' and the
  associated ``lifting schemes'' \cite{Swe96}.

  We subsample the data $c \in \ell \left( \ZZ^2 \right)$ to obtain
  $c' = D_{W_{r(\eta)}}c$ and make use of
  Lemma~\ref{lem:help2} to obtain that
  \[
  \sum_{\alpha \in \ZZ^2} c' (\alpha) \, f_{\eps^*} \left( W_0^{n-k} \cdot -
    \alpha \right)
  = \sum_{\alpha \in \ZZ^2} S_{\eta} c' (\alpha) \, f_{\widehat \tau^*}
  \left( W_{r(\eta)} W_0^{n-k} \cdot - \alpha \right).
  \]
  This identity is then decomposed with respect to $\Gamma_{\eta}$
  giving the \emph{prediction}
  \begin{eqnarray*}
    \lefteqn{
      \sum_{\alpha \in \ZZ^2} c' (\alpha) \, f_{\eps^*} \left( W_0^{n-k}
        \cdot - \alpha \right) } \\
    & = & \sum_{\gamma \in \Gamma_{\eta}} \sum_{\alpha \in \ZZ^2} S_{\eta}
    c' \left( W_{r(\eta)} \alpha + \gamma \right) \, f_{\tau^*} \left(
      W_{r(\eta)} W_0^{n-k} \cdot - W_{r(\eta)} \alpha - \gamma \right) \\
    & = & \sum_{\alpha \in \ZZ^2} S_{\eta} c' \left( W_{r(\eta)} \alpha
    \right) \, f_{\widehat \tau^*} \left( W_{r(\eta)} W_0^{n-k} \cdot -
      W_{r(\eta)} \alpha \right) \\
    & & + \sum_{\gamma \in \Gamma_{\eta}^*} \sum_{\alpha \in \ZZ^2} S_{\eta}
    c' \left( W_{r(\eta)} \alpha + \gamma \right) \, f_{\tau^*} \left(
      W_{r(\eta)} W_0^{n-k} \cdot - W_{r(\eta)} \alpha - \gamma \right) \\
   & = & \sum_{\alpha \in \ZZ^2} c \left( \alpha \right) \, f_{\tau^*} \left(
     W_{r(\eta)} \left( W_0^{n-k} \cdot - \alpha \right) \right) \\
    & & + \sum_{\gamma \in \Gamma_{\eta}^*} \sum_{\alpha \in \ZZ^2} S_{\eta}
    c' \left( W_{r(\eta)} \alpha + \gamma \right) \, f_{\tau^*} \left(
      W_{r(\eta)} \left( W_0^{n-k} \cdot - \alpha \right) - \gamma \right)
  \end{eqnarray*}
  since the subdivision schemes were supposed to be interpolatory. Comparing
  this with the decomposition
  \begin{eqnarray*}
      \sum_{\alpha \in \ZZ^2} c(\alpha) \, f_{\tau^*} \left( W_{r(\eta)}
        W_0^{n-k} \cdot - \alpha \right) \hspace*{-0.05cm}
      & \hspace*{-0.4cm} = \hspace*{-0.4cm}& \hspace*{-0.05cm}\sum_{\gamma \in \Gamma_\eta} \sum_{\alpha \in \ZZ^2} c(\alpha)
      \, f_{\tau^*} \left( W_{r(\eta)}  W_0^{n-k} \cdot - W_{r(\eta)} \alpha
        - \gamma \right)\\
    \hspace*{-0.05cm}& \hspace*{-0.4cm}= \hspace*{-0.4cm}& \hspace*{-0.05cm}\sum_{\alpha \in \ZZ^2} c(\alpha)
    \, f_{\tau^*} \left( W_{r(\eta)}  \left( W_0^{n-k} \cdot - \alpha \right)
    \right)\\
&&    + \sum_{\gamma \in \Gamma_\eta^*} \sum_{\alpha \in \ZZ^2} c(\alpha)
    \, f_{\tau^*} \left( W_{r(\eta)} \left(  W_0^{n-k} \cdot - \alpha \right)
      - \gamma \right)
  \end{eqnarray*}
  we have to apply precisely the \emph{correction} from
  \eqref{eq:DecompForm}.
\end{proof}

\noindent For the special case $\eta = \eps_1$ and thus $\tau =
\widehat \eps$, Theorem~\ref{theo:decomposition} simplifies into the
following form.

\begin{corollary}
  \label{cor:decomposition}
  For $c \in \ell \left( \ZZ^2 \right)$, $\eps \in E$ and $n \in \NN$
  we have that
  \begin{eqnarray}
    \nonumber
    \lefteqn{ \sum_{\alpha \in \ZZ^2} c( \alpha ) \,
      f_{\widehat \eps^*} \left( W_{\eps_1} W_0^{n-1} \cdot - \alpha \right)
      = \sum_{\alpha \in \ZZ^2} c \left( W_{\eps_1} \alpha \right) \,
      f_{\eps^*} \left( W_0^{n-1} \cdot - \alpha \right) } \\
    \label{eq:DecompForm1Step}
    & & + \sum_{\gamma \in \Gamma_{\eps_1}^*} \sum_{\alpha \in \ZZ^2}
    \left( c - S_{\eps_1} D_{W_{\eps_1}} c \right) \left( W_{\eps_1} \alpha +
      \gamma \right) \, f_{\widehat \eps^*} \left( W_{\eps_1} \left( W_0^{n-1} \cdot -
        \alpha \right) - \gamma \right).
  \end{eqnarray}
\end{corollary}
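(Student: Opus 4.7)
The plan is to obtain the corollary as an immediate specialization of Theorem \ref{theo:decomposition}, so no new work is really needed beyond a careful substitution. Write $\eps = (\eps_1, \widehat\eps)$, set $\eta := (\eps_1) \in E_1$ (so $k=1$) and $\tau := \widehat\eps$; then $\eps = (\eta,\tau)$ is in the form required by Theorem \ref{theo:decomposition}, and the hypothesis $n \ge k$ reduces to $n \ge 1$, which is assumed.

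With this choice, I would then simply unwind the notation. Since $\eta$ has length one, the reversal operator acts trivially: $r(\eta) = (\eps_1)$, and consequently $W_{r(\eta)} = W_{\eps_1}$ and $\Gamma_\eta = \Gamma_{\eps_1}$. The exponent $W_0^{n-k}$ in the theorem becomes $W_0^{n-1}$, and the subdivision operator $S_\eta$ becomes $S_{\eps_1}$ by the definition $S_\eps = S_{\eps_n}\cdots S_{\eps_1}$ evaluated on a length-one sequence. Substituting all these identifications into formula \eqref{eq:DecompForm} reproduces \eqref{eq:DecompForm1Step} term for term: the left-hand side matches since $\tau^* = \widehat\eps^{\,*}$, the first sum on the right matches since $\eps^*$ and $W_0^{n-1}$ appear in both, and the correction sum matches because the argument $(c - S_\eta D_{W_{r(\eta)}} c)$ collapses to $(c - S_{\eps_1} D_{W_{\eps_1}} c)$.

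There is no genuine obstacle; the only thing to verify carefully is the bookkeeping of indices, in particular the triviality of the reversal on length-one sequences and the consistency of $\tau^* = \widehat\eps^{\,*}$ as elements of $E^*$. If I wished to be fully self-contained I could, alternatively, re-derive the identity from scratch by applying Lemma \ref{lem:help2} with $k=1$ to the subsampled sequence $c' = D_{W_{\eps_1}} c$ and then decomposing modulo $\Gamma_{\eps_1}$ exactly as in the proof of Theorem \ref{theo:decomposition}, but this would simply reproduce the general argument with $k$ fixed to $1$, so the substitution route is cleaner.
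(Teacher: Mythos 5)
Your proposal is correct and coincides with the paper's treatment: the paper offers no separate proof, stating only that Theorem~\ref{theo:decomposition} ``simplifies into the following form'' for $\eta=\eps_1$, $\tau=\widehat\eps$, which is precisely your substitution with $k=1$, $r(\eta)=(\eps_1)$, $W_{r(\eta)}=W_{\eps_1}$, and $\Gamma_\eta^*=\Gamma_{\eps_1}^*$. The index bookkeeping you flag is exactly the only thing to check, and it works out as you describe.
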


\begin{remark}
  The decomposition \eqref{eq:DecompForm1Step} is the shearlet decomposition
  associated with the shearlet MRA: The function on the left hand side belongs
  to $V_n$ and is written as the sum of a function in $V_{n-1}$ and correction
  terms from $V_n$ that vanish at $W_{\eps_1} \ZZ^2$ -- the \emph{shearlets} in
  the interpolatory MRA.
\end{remark}

\subsection{Decomposition Algorithm}

The \emph{fast shearlet decomposition} is now based on an iterative
application of \eqref{eq:DecompForm1Step}, where each step can be
understood as filtering by means of a filter bank. To that end, we
have to interpret the initial sequence $c \in \ell \left( \ZZ^2
\right)$ appropriately. Denoting by $g_\eps := f_0 \left( U_\eps
\cdot \right)$ the ``sheared'' version of the refinable function
$f_0$, we form the quasi-interpolants
\begin{equation}
  \label{eq:shearMRAquasi}
  q_{\eps,n} :=
  g_\eps * (D_{U_\eps}c) \left( W_0^n \cdot \right) = \sum_{\alpha \in \ZZ^2} c
  \left( U_\eps \alpha \right) \, g_\eps \left( W_0^n \cdot - \alpha \right)
  = \sum_{\alpha \in \ZZ^2} c(\alpha) \, f_0 \left( U_\eps W_0^n \cdot - \alpha
  \right).
\end{equation}
These are precisely the functions which appear on the left hand side
of \eqref{eq:DecompForm} and \eqref{eq:DecompForm1Step}. It is
worthwhile to note that all the functions $q_{\eps,n}$ are relying
on the same initial data $c \in \ell \left( \ZZ^2 \right)$.

The interpretation of \eqref{eq:shearMRAquasi} is rather easy now if
we take into account that $f_0$ was assumed to be the limit function
of an interpolatory scheme, hence cardinal: $f_0 \left( \alpha
\right) = \delta_{0,\alpha}$, $\alpha \in \ZZ^2$. Hence, since
\begin{equation}
  \label{eq:q_nepsinterp}
  q_{\eps,n} (x)
  = \sum_{\alpha \in \ZZ^2} c(\alpha) \, f_0 \left( W_\eps x -
  \alpha \right), \qquad x \in \R^2,
\end{equation}
we can substitute $x = W_\eps^{-1} \alpha = M_\eps \alpha$ and use
the cardinality of $f_0$ to find that $q_{\eps,n} \left( M_\eps
\alpha \right) = c(\alpha)$ or $q_{\eps,n} \left( W_0^{-n} \alpha
\right) = c \left( U_\eps \alpha \right)$, respectively. The latter
tells us that we should interpret the sequence $c$ as a function
sampled at the grid $W_0^{-n} \ZZ^2$, while the parameter $\eps$
determines how this data is sheared and which thus are the
directions ``preferred'' by the wavelet decomposition.

For the fast decomposition we now start with $c \in \ell \left(
\ZZ^2 \right)$, interpret it as in \eqref{eq:q_nepsinterp}, and
decompose it in two ways, namely, for $\eps \in E_1$, into
\[
q_{\eps,n} = \sum_{\alpha \in \ZZ^2} c_\eps \left( \alpha \right) \,
f_{\eps^*} \left( W_0^{n-1} \cdot - \alpha \right) + \sum_{\gamma
\in \Gamma_{\eps}^*} \sum_{\alpha \in \ZZ^2} d_{\eps,\gamma} \left(
\alpha \right) \, f_0 \left( W_{\eps} \left( W_0^{n-1} \cdot -
    \alpha \right) - \gamma \right),
\]
where the coefficients
\begin{eqnarray*}
  c_\eps & = & D_{W_\eps}c \\
  d_{\eps,\gamma} & = & \left( c - S_{\eps} D_{W_{\eps}} c \right) \left( W_{\eps} \cdot +
      \gamma \right)
\end{eqnarray*}
are obtained by filtering the original sequence $c$ in {\em both}
cases. This is the fundamental property of this decomposition
algorithm: even if we decompose \emph{two different} functions,
$q_{\eps,n}$ with $\eps \in E_1$, we have to filter \emph{only one}
data vector to obtain the new set of scaling coefficients $\left\{
c_\eps \;:\; \eps \in E_1 \right\}$ and shearlet coefficients
$\left\{ d_{\eps,\gamma} \;:\; \eps \in E_1, \gamma \in
\Gamma_{\eps}^* \right\}$.

In the next step, the sequences $c_\eps$ and the associated
functions $q_{(\eps,\eta),n-1}$ are decomposed in precisely the same
way, making use of Corollary~\ref{cor:decomposition} again. Like
above, we filter $c_0$ twice to obtain new, further downsampled
sequences $c_{(0,0)}$ and $c_{(0,1)}$ together with the respective
shearlet coefficients $d_{(0,0),\gamma}$, $\gamma \in \Gamma_0^*$
and $d_{(0,1),\gamma}$, $\gamma \in \Gamma_1^*$. In exactly the same
way we obtain $c_{(1,0)}$ and $c_{(1,1)}$ as well as
$d_{(1,0),\gamma}$, $\gamma \in \Gamma_0^*$ and $d_{(1,1),\gamma}$,
$\gamma \in \Gamma_1^*$ by filtering $c_1$. These first two steps of
decomposition are illustrated in Figure~\ref{fig:decomposition}.
\unitlength0.25mm
\begin{figure}[h]
\begin{center}
  \begin{picture}(460,180)(0,0)
\put(220,150){\vector(-2,-1){105}} \put(230,150){\vector(2,-1){105}}
\put(220,160){$c = c_{()^*}$} \put(60,80){$c_{(0)^*} \oplus
d_{(0)^*,\gamma}$} \put(300,80){$c_{(0)} \oplus d_{(1)^*,\gamma}$}
\put(70,70){\vector(-1,-2){25}} \put(80,70){\vector(3,-2){80}}
\put(310,70){\vector(-1,-2){25}} \put(320,70){\vector(3,-2){80}}
\put(0,0){$c_{(0,0)^*} \oplus d_{(0,0)^*,\gamma}$}
\put(120,0){$c_{(0,1)^*} \oplus d_{(0,1)^*,\gamma}$}
\put(240,0){$c_{(1,0)^*} \oplus d_{(1,0)^*,\gamma}$}
\put(360,0){$c_{(1,1)^*} \oplus d_{(1,1)^*,\gamma}$}
  \end{picture}
\end{center}
\caption{The binary tree associated with the fast shearlet
decomposition.} \label{fig:decomposition}
\end{figure}
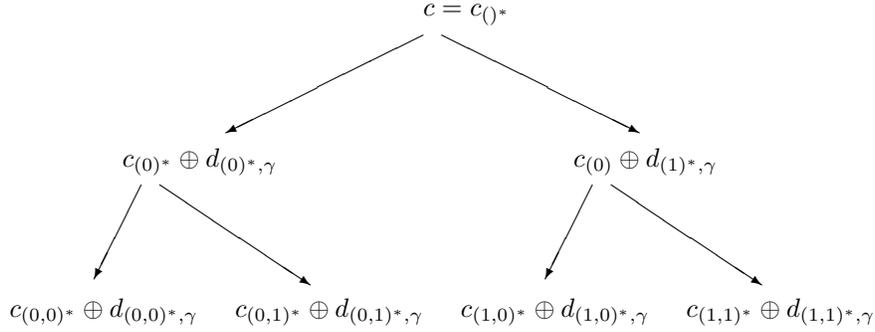
It can already be seen from Figure~\ref{fig:decomposition} that --
like the subdivision scheme -- the shearlet decomposition becomes a
binary tree labeled by the directional indices $\eps$. Indeed, in
general we obtain the new coefficients by the following simple filtering.

\begin{algorithm}
\label{algo:FSD}
Let $c_\eps$ for some $\eps \in E$ be given. Then the next level of scaling and
shearlet coefficients are computed as
\begin{equation*}
  \begin{array}{rcl}
    c_{(\eps,\eta)} & = & D_{W_\eta} c_\eps, \\
    d_{(\eps,\eta),\gamma} & = & \left( c_\eps - S_{\eta} D_{W_{\eta}} c_\eps \right)
    \left( W_{\eta} \cdot + \gamma \right),
  \end{array}
  \qquad \eta \in E_1, \quad \gamma \in \Gamma_{\eta}^*.
\end{equation*}
Eventually, this process ends up with coarsest level scaling
coefficients $c_\eps$, $\eps \in E_n$, and shearlet coefficients
$d_{\eps,\gamma}$, $\eps \in E_k$, $k \le n$, $\gamma \in
\Gamma_{\eps_k}^*$ which describe the deviation from the coarse
data.
\end{algorithm}

Indeed, it is now easily seen that such a decomposition must
recognize ``sheared'' and thus directional components of two
dimensional data since \eqref{eq:DecompForm} relates, for $\eps \in
E$, the data $D_{W_\eps} c$ with the function $g_\eps$ and the
respective shearlet coefficients must be large where the prediction
by the subdivision scheme is inaccurate, i.e., at directional
singularities. Thus, the ``recipe'' is to consider the shearlet
coefficients
\[
d_{P_k \eps, \gamma}, \qquad k = 1,\dots,n, \:\eps \in E_n, \:
\gamma \in \Gamma_{\eps_k}^*.
\]
A precise analysis of this nevertheless fundamental aspect of
directional edge detection is beyond the scope of this paper where
we just want to give the framework for adaptive directional
detections. It should also be clear that the adaptive directional
approach is not tied to interpolatory schemes, in fact, any perfect
reconstruction filter bank can be used as long as the projection and
its complement can be expressed properly. We plan to address these
questions as well as the numerical implementations in a further
paper, however.


\section*{Acknowledgments}

The first author would like to thank Ingrid Daubechies for very
inspiring discussions, and Wolfgang Dahmen for helpful comments on
an earlier version of this paper.
She especially thanks PACM at Princeton University for its
hospitality and support during her visit.

\bibliographystyle{amsplain}

\end{document}
